\documentclass[11pt,a4paper]{article}
\usepackage[utf8]{inputenc}
\usepackage[T1]{fontenc}
\usepackage{amsmath,amssymb,amsthm,amsfonts}
\usepackage{mathtools}
\usepackage{graphicx}
\usepackage{booktabs}
\usepackage{algorithm}
\usepackage{algpseudocode}
\usepackage{hyperref}
\usepackage{geometry}
\usepackage[protrusion=true,expansion=false]{microtype}
\usepackage{enumitem}
\usepackage{bbm}
\usepackage{xcolor}
\usepackage{float}

\theoremstyle{plain}
\newtheorem{theorem}{Theorem}[section]
\newtheorem{lemma}[theorem]{Lemma}

\theoremstyle{definition}
\newtheorem{definition}[theorem]{Definition}
\newtheorem{assumption}[theorem]{Assumption}
\newtheorem{remark}[theorem]{Remark}

\DeclareMathOperator*{\argmin}{arg\,min}

\DeclareMathOperator{\proj}{proj}

\DeclareMathOperator{\Span}{span}

\title{\textbf{Primal Methods for Constrained Variational Inequalities: Optimal Rates, Feasibility Trade-offs, and Lower Bounds}}
\author{Yonghong Yao\footnote{School of Mathematical Sciences, 
Tiangong University, Tianjin 300387, China; 
e-mail: yyhtgu@hotmail.com}
\and Lateef O.  Jolaoso \footnote{ Big Data Technologies and Innovation Lab,
University of Hertfordshire, Hatfield, United Kingdom; Department of Mathematics and Applied Mathematics,
Sefako Makgatho Health Sciences University, Ga-Rankuwa, Pretoria, South Africa; e-mail: L.Jolaoso@Herts.ac.uk}
\and Yekini Shehu\footnote{(Corresponding Author) School of Mathematical Sciences, 
Zhejiang Normal University, Jinhua 321004, China; 
e-mail: yekini.shehu@zjnu.edu.cn}
\and Jen-Chih Yao \footnote{Center for General Education, China Medical University, 40402, Taichung, Taiwan; and Academy of Romanian Scientists, Bucharest, Romania; e-mail: yaojc@math.nsysu.edu.tw}}
\date{}

\begin{document}
\maketitle

\begin{abstract}
\noindent Since Zhang et al. \cite{Zhang2025} introduced the first purely primal methods for monotone variational inequalities subject to convex functional constraints, the field has lacked a complete understanding of the complexity limits of this oracle class. We develop a theory that resolves the critical gaps left by their pioneering framework. Our Optimal Primal Constrained Gradient Method (OPCGM) establishes convergence rates and lower bounds for algorithms restricted to local linear constraint approximations. For strongly monotone operators, we prove that a refined primal gradient method achieves the optimal $\mathcal{O}(1/T)$ rate for both optimality gap and constraint violation, eliminating the suboptimal exponent present in prior work. For Lipschitz monotone operators, we propose a primal extragradient variant that achieves the optimal $\mathcal{O}(1/\epsilon)$ gap complexity using only quadratic programming oracles, at the cost of a constant asymptotic feasibility violation for the averaged iterate; we prove that the first half-step is necessarily infeasible for the natural class of constant-stepsize primal extragradient methods on smooth convex constraints with positive curvature. We establish lower complexity bounds for primal methods restricted to local linear approximations, proving $\Omega(1/\epsilon^2)$ for Lipschitz monotone variational inequalities with Lipschitz constant scaling as $\Theta(1/\epsilon)$, and $\Omega(1/\epsilon)$ for standard Lipschitz monotone variational inequalities. We design a single-loop primal method that achieves an $\mathcal{O}(1/\sqrt{T})$ gap rate with only a domain-radius estimate, at the cost of a problem-dependent asymptotic feasibility constant that we prove is unavoidable. For strongly monotone problems, we prove that the last iterate converges at the optimal $\mathcal{O}(1/T)$ rate. For merely monotone problems, we prove a universal impossibility theorem: no single-step primal QP method can achieve vanishing last-iterate gap on the canonical rotation instance. We extend the framework to stochastic settings with optimal sample complexity. Numerical experiments on synthetic equilibrium problems, portfolio optimization, and CUTEst problems show that OPCGM attains substantially better feasibility than other purely primal methods and remains competitive with projection-based methods in per-iteration cost, all without Lagrange-multiplier information.
\end{abstract}

\noindent\textbf{Keywords.} Variational inequality; functional constraints; primal methods; lower bounds; last-iterate convergence; stochastic optimization.

\noindent\textbf{Mathematics Subject Classification.} 90C33, 65K15, 90C47, 90C25, 68Q25.

\section{Introduction}\label{sec:intro}

Variational inequality problems provide a unified framework for optimization and equilibrium seeking, with applications ranging from minimax optimization and game theory \cite{Daskalakis2018, Mertikopoulos2019} to contact mechanics, traffic equilibrium \cite{Dafermos1980}, and constrained reinforcement learning 
\cite{BauschkeCombettes2011, FacchineiPang2003, Kinderlehrer1980}. In the constrained setting, one seeks $x^*\in\mathcal{C}$ such that
\begin{equation}\label{eq:VI}
F(x^*)^\top(x-x^*)\geq 0,\qquad \forall x\in\mathcal{C},
\end{equation}
where $\mathcal{C}=\{x\in\mathbb{R}^d\mid g_i(x)\leq 0,\;i\in[m]\}$ is defined by convex functional constraints. When the operator $F$ is maximal monotone and continuous, the strong and weak solution sets coincide, and the weak formulation
\begin{equation}\label{eq:Minty}
F(x)^\top(x^*-x)\leq 0,\qquad \forall x\in\mathcal{C},
\end{equation}
is commonly adopted in the algorithmic literature.

\paragraph{Historical context.} The extragradient method of Korpelevich \cite{Korpelevich1976} was the first algorithm to achieve convergence for monotone variational inequalities without strong monotonicity, requiring two projections per iteration. Popov \cite{Popov1980} introduced a single-call variant that remains foundational for modern optimistic methods. Nemirovski \cite{Nemirovski2004} developed the mirror-prox method with the optimal $\mathcal{O}(1/T)$ rate for Lipschitz monotone operators, and Nesterov \cite{Nesterov2007} proposed dual extrapolation as a primal-dual alternative. Rockafellar \cite{Rockafellar1976} established the proximal point algorithm as the conceptual foundation for these methods. Mokhtari et al.\ \cite{Mokhtari2020} provided a unified analysis connecting extragradient and optimistic gradient methods through the proximal point framework. These methods require a projection oracle onto $\mathcal{C}$, which is computationally prohibitive when the constraints are general nonlinear functions.

\paragraph{Primal-dual methods.} Methods based on the Lagrangian circumvent the projection but require the existence and boundedness of optimal Lagrange multipliers, and their stepsizes depend explicitly on the multiplier magnitude \cite{Yang2022, Boob2023}. When multipliers are unknown or unbounded---as in traffic equilibrium problems with unknown road tolls, or contact mechanics problems with unknown contact forces---these methods become impractical.

\paragraph{Primal methods.} Zhang et al.\ \cite{Zhang2025} proposed the Constrained Gradient Method (CGM), a purely primal approach that projects the velocity onto a local linear approximation of the feasible set. CGM achieves $\mathcal{O}(1/\epsilon^2)$ complexity for monotone operators without any knowledge of the optimal multipliers. Nevertheless, their analysis leaves several critical questions unresolved: (i) in the strongly monotone setting, the constraint violation decays as $\mathcal{O}(1/T^{c_\gamma})$ with $c_\gamma=(\gamma-1)/(\gamma+1)<1$, strictly slower than the optimal $\mathcal{O}(1/T)$; (ii) for Lipschitz monotone operators, no primal method achieves the optimal $\mathcal{O}(1/\epsilon)$ rate for the optimality gap; (iii) there are no lower bounds specific to the quadratic programming oracle class; (iv) the method requires prior knowledge of problem parameters and a user-tuned constant $\gamma>1$; (v) convergence guarantees hold only for averaged iterates; and (vi) stochastic extensions are absent.

The suboptimal feasibility rate means that for strongly monotone problems, existing primal algorithms require $\mathcal{O}(1/\epsilon^{1/c_\gamma})$ iterations to achieve $\epsilon$-feasibility, compared to the $\mathcal{O}(1/\epsilon)$ rate of projection-based methods. Without understanding whether the $\mathcal{O}(1/\epsilon^2)$ barrier is inherent to primal methods or an artifact of analysis, the field cannot determine the true cost of avoiding projections. 
Our work resolves these gaps and provides a comprehensive analysis of the complexity of primal methods for constrained variational inequalities. Table~\ref{tab:comparison_zhang} summarizes the principal differences between Zhang et al. (2025) and the present OPCGM framework.

\begin{table}[htbp]
\centering
\small
\setlength{\tabcolsep}{3pt}
\caption{Comparison of Zhang et al.  \cite{Zhang2025} and the present OPCGM framework.}
\label{tab:comparison_zhang}
\begin{tabular}{@{}p{3.6cm}p{5.4cm}p{5.4cm}@{}}
\toprule
\textbf{Aspect} & \textbf{Zhang et al. \cite{Zhang2025}} & \textbf{This Manuscript (OPCGM)} \\
\midrule
\addlinespace[2pt]
Problem class & Monotone and strongly monotone VIs with smooth convex functional constraints & Same classes, plus stochastic oracles and parameter-free settings \\
\addlinespace[2pt]
Strongly monotone feasibility rate & $\mathcal O(1/T^{c_\gamma})$ with $c_\gamma=(\gamma-1)/(\gamma+1)<1$ (suboptimal) & $\mathcal O(1/T)$ (optimal; Theorem~3.1) \\
\addlinespace[2pt]
Monotone Lipschitz gap complexity & $\mathcal O(1/\epsilon^2)$ & $\mathcal O(1/\epsilon)$ (optimal; primal extragradient, Theorem~4.1) \\
\addlinespace[2pt]
Monotone feasibility behavior & $\mathcal O(1/\epsilon^2)$ overall & Explicit $\mathcal O(1)$ asymptotic constant for the averaged iterate; first half-step infeasibility proved unavoidable on curved constraints (Theorem~4.3) \\
\addlinespace[2pt]
Oracle per iteration & 1 QP (CGM) & 1 QP (strongly monotone, parameter-free, stochastic); 2 QP (Lipschitz extragradient) \\
\addlinespace[2pt]
Required parameters & $\mu$, $L_F$, $D$, and a user-tuned constant $\gamma>1$ & Parameter-free variant needs only a radius bound $R\ge D$ (Algorithm~6.1, Theorem~6.1) \\
\addlinespace[2pt]
Lower bounds & None & $\Omega(1/\epsilon^2)$ (large Lipschitz constant) and $\Omega(1/\epsilon)$ (standard Lipschitz) for the primal QP oracle class (Theorems~5.1--5.2) \\
\addlinespace[2pt]
Iterate guarantees & Averaged iterates only\footnotemark & Last-iterate: optimal $\mathcal O(1/T)$ for strongly monotone; universal impossibility for single-step merely monotone methods (Theorem~7.2) \\
\addlinespace[2pt]
Stochastic extensions & Not addressed & Optimal sample complexity $\mathcal O(1/\epsilon^2)$ (monotone) and $\mathcal O(1/\epsilon)$ (strongly monotone) (Theorem~8.1) \\
\addlinespace[2pt]
Core technique & Auxiliary constraint and $\gamma$-machinery with inductive velocity bound & Explicit norm safeguard and projection safeguard; direct velocity bound decoupled from the auxiliary constraint (Lemma~2.1) \\
\bottomrule
\end{tabular}
\end{table}
\footnotetext{Last-iterate convergence in Zhang et al. (2025) is restricted to the special case of strongly convex minimization (their Theorem~3), not general VIs.}

\paragraph{Contributions.} This paper addresses the gaps identified above and summarized in Table~\ref{tab:comparison_zhang}. For strongly monotone problems, we prove that a refined CGM with $\alpha=2\mu$ and stepsize $\eta_t=1/(\mu(t+1))$ achieves $\mathcal{O}(1/T)$ feasibility, eliminating the suboptimal exponent. The key is a direct bound on the velocity norm that removes the auxiliary constraint and the $\gamma$-machinery, combined with a nonexpansive projection safeguard that guarantees uniform boundedness without circular induction. 
For Lipschitz monotone operators, we propose a primal extragradient variant that achieves the optimal $\mathcal{O}(1/\epsilon)$ gap complexity using only quadratic programming oracles, at the cost of a constant asymptotic feasibility violation for the averaged iterate; we prove that the first half-step is necessarily infeasible for the natural class of constant-stepsize primal extragradient methods on smooth convex constraints with positive curvature at the boundary.
We establish lower complexity bounds for primal methods restricted to local linear constraint approximations, proving $\Omega(1/\epsilon^2)$ for Lipschitz monotone variational inequalities with Lipschitz constant scaling as $\Theta(1/\epsilon)$, and $\Omega(1/\epsilon)$ for standard Lipschitz monotone variational inequalities. We design a single-loop primal method that achieves an $\mathcal{O}(1/\sqrt{T})$ rate for the optimality gap for monotone problems with only a domain-radius estimate, at the cost of a problem-dependent asymptotic feasibility constant that we prove is unavoidable for non-adaptive single-step algorithms without additional problem parameters. For strongly monotone problems, we prove that the last iterate of the primal gradient method converges at the optimal $\mathcal{O}(1/T)$ rate. For merely monotone problems, we prove a universal impossibility theorem for single-step primal QP methods: on the canonical rotation instance, no single-step primal QP method---regardless of stepsize schedule---can achieve vanishing last-iterate gap, and the sequence either fails to converge or converges to a non-solution with constant gap. We extend the framework to stochastic settings with optimal sample complexity. Finally, we develop a stochastic OPCGM with variance reduction that achieves optimal sample complexity $\mathcal{O}(1/\epsilon^2)$ for monotone and $\mathcal{O}(1/\epsilon)$ for strongly monotone problems, and demonstrate scalability to problems with thousands of features.

Section~\ref{sec:prelim} introduces notation and assumptions. Section~\ref{sec:strong} closes the strongly monotone feasibility gap with a complete proof. Section~\ref{sec:lipschitz} presents the primal extragradient method, its feasibility analysis, and the feasibility lower bound with full proofs. Section~\ref{sec:lower} proves the query-complexity lower bounds for the primal QP class. Section~\ref{sec:universal} presents the parameter-free primal method with complete analysis and a feasibility lower bound for non-adaptive single-step algorithms. Section~\ref{sec:last} covers last-iterate convergence with full proofs for single-step methods. Section~\ref{sec:stochastic} covers stochastic extensions. Section~\ref{sec:numerics} presents numerical experiments. Section~\ref{sec:conclusion} concludes.

\section{Preliminaries}\label{sec:prelim}

We use $\|\cdot\|$ for the Euclidean norm and $[m]=\{1,\dots,m\}$. An operator $F:\mathbb{R}^d\to\mathbb{R}^d$ is \emph{$\mu$-strongly-monotone} if
\[
(F(x)-F(y))^\top(x-y)\geq \mu\|x-y\|^2,\qquad \forall x,y\in\mathbb{R}^d,
\]
and \emph{monotone} if $\mu=0$. It is \emph{$L$-Lipschitz} if $\|F(x)-F(y)\|\leq L\|x-y\|$ for all $x,y$. A function $g:\mathbb{R}^d\to\mathbb{R}$ is convex, $L_g$-Lipschitz, and $\ell_g$-smooth if it is differentiable with $\|\nabla g(x)-\nabla g(y)\|\leq\ell_g\|x-y\|$ and $|g(x)-g(y)|\leq L_g\|x-y\|$.

\begin{assumption}\label{ass:F}
The operator $F$ is maximal monotone and continuous on $\mathbb{R}^d$. For the projection safeguard radius $R>0$ specified in the respective algorithm, we assume
\[
L_F:=\sup_{\|x\|\leq R}\|F(x)\|<\infty.
\]
\end{assumption}

\begin{assumption}\label{ass:g}
Each constraint $g_i$ is convex, $L_g$-Lipschitz, and $\ell_g$-smooth on $\mathbb{R}^d$. The feasible set $\mathcal{C}$ is nonempty and contained in a Euclidean ball of radius $D$ centered at the origin. Moreover, there exists at least one constraint $g_{i^*}$ such that $\nabla^2 g_{i^*}(x)\succeq \ell_0 I$ for some $\ell_0>0$ and all $x$ in a neighborhood of the boundary $\partial\mathcal{C}$.
\end{assumption}

The positive curvature condition in Assumption~\ref{ass:g} is mild: it holds for ball constraints ($g(x)=\|x\|^2-1$), ellipsoid constraints, and any smooth convex set with strictly positive Gaussian curvature at the boundary. It excludes only polyhedral feasible sets.

\begin{assumption}[Uniform linear independence of active constraints]\label{ass:licq}
There exists $\sigma_0>0$ such that for every $x\in B(0,R)$ with $R>2D$ and every active subset $I\subseteq I_x$, the matrix $G_I(x)\in\mathbb{R}^{|I|\times d}$ with rows $\{\nabla g_i(x)^\top\}_{i\in I}$ has smallest singular value at least $\sigma_0$.
\end{assumption}

Assumption~\ref{ass:licq} is a uniform linear independence constraint qualification (LICQ). It holds generically for smooth convex constraints and is standard in parametric optimization \cite{BonnansShapiro2000, Robinson1980}.

\begin{definition}
A point $\hat{x}\in\mathbb{R}^d$ is a \emph{weak $\epsilon$-solution} if $F(x)^\top(\hat{x}-x)\leq\epsilon$ for all $x\in\mathcal{C}$ and $\max_{i\in[m]}g_i(\hat{x})\leq\epsilon$.
\end{definition}

\paragraph{The velocity polytope.} For $x\in\mathbb{R}^d$ and $\alpha>0$, define the active set $I_x=\{i\in[m]\mid g_i(x)\geq 0\}$ and the velocity polytope
\[
\mathcal{V}_\alpha(x)=\{v\in\mathbb{R}^d\mid \alpha g_i(x)+\nabla g_i(x)^\top v\leq 0,\;\forall i\in I_x\}.
\]
At each iteration, OPCGM solves the quadratic program
\begin{equation}\label{eq:QP}
v\approx\argmin_{w\in\mathcal{V}_\alpha(x)}\frac{1}{2}\|w+F(x)\|^2,
\end{equation}
requiring $(v+F(x))^\top(v-w)\leq\epsilon_{\mathrm{QP}}/2$ for all $w\in\mathcal{V}_\alpha(x)$. To prevent the circular dependency identified in prior work, we augment the velocity polytope with an explicit norm safeguard.

\begin{lemma}[Uniform velocity bound with norm safeguard]\label{lem:velocity}
Let Assumptions~\ref{ass:F} and \ref{ass:g} hold. For any $x\in\mathbb{R}^d$ with $\|x\|\leq R$ and any $\alpha>0$, define the augmented velocity polytope
\[
\widetilde{\mathcal{V}}_\alpha(x) = \{v\in\mathcal{V}_\alpha(x) : \|v\|\leq 4L_F + 2\alpha R\}.
\]
Then the approximate solution $v$ to $\min_{w\in\widetilde{\mathcal{V}}_\alpha(x)}\frac{1}{2}\|w+F(x)\|^2$ with precision $(v+F(x))^\top(v-w)\leq\epsilon_{\mathrm{QP}}/2$ for all $w\in\widetilde{\mathcal{V}}_\alpha(x)$ and $\epsilon_{\mathrm{QP}}\leq L_F^2$ satisfies
\begin{equation}\label{eq:vel_bound_final}
\|v\|\leq 4L_F+2\alpha R,
\end{equation}
and the velocity bound holds without induction on $\|x_t\|$.
\end{lemma}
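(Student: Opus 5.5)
The plan is to show that the bound \eqref{eq:vel_bound_final} follows from \emph{feasibility}: the approximate QP solution is, by definition, a point of the augmented set $\widetilde{\mathcal{V}}_\alpha(x)$. The substantive work is to check that this set is a nonempty closed convex set, so that the QP over it is well posed and an approximate solution with the stated precision exists. Once that is done, $\|v\|\le 4L_F+2\alpha R$ is immediate from $v\in\widetilde{\mathcal{V}}_\alpha(x)$. The argument never refers to earlier iterates, so no induction on $\|x_t\|$ is involved.

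\emph{Step 1: closed convex structure.} $\mathcal{V}_\alpha(x)$ is an intersection of finitely many closed half-spaces $\{v:\alpha g_i(x)+\nabla g_i(x)^\top v\le 0\}$, $i\in I_x$, hence a closed convex polyhedron. Intersecting it with the closed ball of radius $4L_F+2\alpha R$ keeps it closed, convex and bounded. The objective $\frac12\|w+F(x)\|^2$ is strongly convex, so once nonemptiness is established an exact minimizer exists and is unique. Approximate solutions with precision $\epsilon_{\mathrm{QP}}/2$ therefore exist, for example the exact minimizer, which satisfies the variational inequality with zero error.

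\emph{Step 2: nonemptiness (the main point).} I would exhibit an explicit feasible velocity. Pick any $\bar x\in\mathcal{C}$, which is possible since $\mathcal{C}\neq\emptyset$ by Assumption~\ref{ass:g}, and set $w_0=\alpha(\bar x-x)$. For each $i\in I_x$, convexity of $g_i$ gives $g_i(\bar x)\ge g_i(x)+\nabla g_i(x)^\top(\bar x-x)$. Multiplying by $\alpha>0$,
\[
\alpha g_i(x)+\nabla g_i(x)^\top w_0\le \alpha g_i(\bar x)\le 0,
\]
so $w_0\in\mathcal{V}_\alpha(x)$. Moreover $\|w_0\|\le\alpha(\|\bar x\|+\|x\|)\le\alpha(D+R)\le 2\alpha R$, using $\|\bar x\|\le D$ and $D\le R$; the latter holds since the safeguard radius satisfies $R>2D$, as in Assumption~\ref{ass:licq}. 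Hence $\|w_0\|\le 4L_F+2\alpha R$ and $w_0\in\widetilde{\mathcal{V}}_\alpha(x)$.

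\emph{Step 3: conclusion.} The returned $v$ lies in $\widetilde{\mathcal{V}}_\alpha(x)$, so \eqref{eq:vel_bound_final} holds. The only hypothesis on $x$ used is $\|x\|\le R$, which the projection safeguard of the algorithms enforces at every iterate. The bound is therefore uniform in $t$ and does not come from an inductive argument on $\|x_t\|$.

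The step I expect to need the most care is Step~2. In particular, one must confirm that the radius $4L_F+2\alpha R$ is large enough to contain a point of $\mathcal{V}_\alpha(x)$ even when $x$ is infeasible, and this is where $D\le R$ is needed. The term $4L_F$ is slack here; the assumption $\epsilon_{\mathrm{QP}}\le L_F^2$ is not needed for the bound itself.
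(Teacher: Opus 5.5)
Your argument is correct if an approximate QP solution is taken to be a point of $\widetilde{\mathcal{V}}_\alpha(x)$. That reading matches how the paper uses $v_t$ elsewhere: Case~2 of the feasibility proof of Theorem~\ref{thm:strong} invokes $\alpha g_i(x_t)+\nabla g_i(x_t)^\top v_t\le 0$. It is, however, a genuinely different route from the paper's.

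The paper uses the same comparison point $\alpha(x^*-x)$ as your Step~2, but not only to certify nonemptiness. It plugs it into the exact problem as a competitor, giving $\|v^*+F(x)\|\le 2\alpha R+L_F$. Hence $\|v^*\|\le 2\alpha R+2L_F$, so the norm safeguard is slack at the exact minimizer. It then takes $w=v^*$ in the precision inequality and uses convexity of the objective to get $\|v+F(x)\|^2\le\|v^*+F(x)\|^2+\epsilon_{\mathrm{QP}}$. With $\epsilon_{\mathrm{QP}}\le L_F^2$ this yields $\|v\|\le 2\alpha R+3L_F$.

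Your route makes the stated inequality a tautology: it holds because it was imposed as a constraint. It uses neither $\epsilon_{\mathrm{QP}}\le L_F^2$ nor $\|F(x)\|\le L_F$. Your well-posedness steps are sensible but not needed for the inequality itself.

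The paper's route buys more:
\begin{itemize}
\item It relies only on the precision inequality, not on exact membership of $v$. So it survives a solver (e.g.\ OSQP, used in the experiments) that returns a slightly infeasible point.
\item It gives the sharper bound $2\alpha R+3L_F$.
\item Most importantly, it shows the norm constraint is inactive at the exact minimizer. That fact is what is used later: the KKT system with no multiplier for the norm constraint in the proof of Theorem~\ref{thm:strong}, and again in Lemma~\ref{lem:qp_lip}.
\end{itemize}
This is where the $4L_F$ margin and the hypothesis on $\epsilon_{\mathrm{QP}}$ earn their place.

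A minor point: $D\le R$ is a standing convention of the algorithms, and the paper's proof uses it silently too. It is not supplied by Assumption~\ref{ass:licq}, which is not among the lemma's hypotheses.
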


\begin{proof}
For any $x^*\in\mathcal{C}$, convexity of $g_i$ yields $g_i(x)+\nabla g_i(x)^\top(x^*-x)\leq g_i(x^*)\leq 0$. Hence $\alpha(x^*-x)\in\mathcal{V}_\alpha(x)$. Moreover, $\|\alpha(x^*-x)\|\leq 2\alpha R\leq 4L_F+2\alpha R$ (since $L_F\geq 0$), so $\alpha(x^*-x)\in\widetilde{\mathcal{V}}_\alpha(x)$. Let $v^*$ be the exact minimizer over $\widetilde{\mathcal{V}}_\alpha(x)$. By optimality,
\[
\|v^*+F(x)\|\leq \|\alpha(x^*-x)+F(x)\|\leq 2\alpha R+L_F,
\]
where we used $\|x^*\|\leq D\leq R$. Therefore 
\[
\|v^*\|\leq \|v^*+F(x)\|+\|F(x)\|\leq 2\alpha R+2L_F\leq 4L_F+2\alpha R,
\]
so $v^*$ lies in the interior of the norm ball and the augmented constraint is inactive at the exact solution. For the approximate solution $v$, the variational inequality $(v+F(x))^\top(v-w)\leq\epsilon_{\mathrm{QP}}/2$ with $w=v^*$ gives
\[
\frac{1}{2}\|v+F(x)\|^2\leq \frac{1}{2}\|v^*+F(x)\|^2+\frac{\epsilon_{\mathrm{QP}}}{2}.
\]
Taking square roots and using $\sqrt{a^2+b^2}\leq a+b$ for $a,b\geq 0$ with $a=\|v^*+F(x)\|$ and $b=\sqrt{\epsilon_{\mathrm{QP}}}$,
\[
\|v+F(x)\|\leq \|v^*+F(x)\|+\sqrt{\epsilon_{\mathrm{QP}}}\leq 2\alpha R+L_F+L_F=2\alpha R+2L_F.
\]
Thus $\|v\|\leq \|v+F(x)\|+\|F(x)\|\leq 2\alpha R+3L_F\leq 4L_F+2\alpha R$, establishing \eqref{eq:vel_bound_final}.
\end{proof}

\begin{lemma}[Projection safeguard and eventual inactivity]\label{lem:proj_inactive}
Let $R>2D$ and define $L_F:=\sup_{\|x\|\le R}\|F(x)\|$ and $V:=4L_F+2\alpha R$. Set $\epsilon_{\mathrm{QP}}\le L_F^2/T^2$. In Algorithm~\ref{alg:strong}, the projection $x_{t+1}=\proj_{B(0,R)}(y_{t+1})$ is inactive for all 
\[
t\ge T_1:=\max\Bigl\{\Bigl\lceil\frac{4C'}{D^2}\Bigr\rceil,\;
\Bigl\lceil\frac{V}{\mu(R-3D/2)}\Bigr\rceil,\;1\Bigr\},
\]
where $C':=2\|x_0-x^*\|^2+\frac{3V^2}{\mu^2}+\frac{8RL_F}{\mu}$.
\end{lemma}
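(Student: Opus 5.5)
The plan is to run a Lyapunov (distance-to-solution) argument on the unprojected step $y_{t+1}=x_t+\eta_t v_t$, show $\|x_t-x^*\|^2=\mathcal O(C'/t)$, and then convert this into $\|y_{t+1}\|\le R$ for $t\ge T_1$. Here $x^*$ is the solution of the strongly monotone VI, so $x^*\in\mathcal C$ and $\|x^*\|\le D<R$.

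\emph{Step 1 (nonexpansiveness).} Since $x^*\in B(0,R)$ and $\proj_{B(0,R)}$ is nonexpansive, $\|x_{t+1}-x^*\|\le\|y_{t+1}-x^*\|$. Expanding,
\[
\|x_{t+1}-x^*\|^2\le \|x_t-x^*\|^2+2\eta_t v_t^\top(x_t-x^*)+\eta_t^2V^2,
\]
where $\|v_t\|\le V$ comes from Lemma~\ref{lem:velocity}. This holds whether or not the projection is active, and it is the only place the safeguard enters the recursion.

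\emph{Step 2 (descent inequality).} Test the approximate QP optimality condition with the feasible direction $w=\alpha(x^*-x_t)\in\widetilde{\mathcal V}_\alpha(x_t)$; Lemma~\ref{lem:velocity} already shows this $w$ is admissible. This gives
\[
\alpha\, v_t^\top(x_t-x^*)\le -\|v_t\|^2-F(x_t)^\top v_t+\alpha F(x_t)^\top(x^*-x_t)+\epsilon_{\mathrm{QP}}/2 .
\]
Strong monotonicity gives
\[
F(x_t)^\top(x^*-x_t)\le-\mu\|x_t-x^*\|^2+F(x^*)^\top(x^*-x_t).
\]
I will treat the term $-\|v_t\|^2-F(x_t)^\top v_t$ by completing the square, so that it is at most $\|F(x_t)\|^2/4\le L_F^2/4$. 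For the term $F(x^*)^\top(x^*-x_t)$, I will use Cauchy--Schwarz with $\|x_t-x^*\|\le 2R$, which produces a contribution of order $RL_F$. Dividing by $\alpha=2\mu$ and inserting into Step~1 should give
\[
a_{t+1}\le\Bigl(1-2\mu\eta_t\Bigr)a_t+\eta_t^2V^2+\eta_t\,\frac{c\,RL_F+\epsilon_{\mathrm{QP}}}{\mu},
\qquad a_t:=\|x_t-x^*\|^2 .
\]

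\emph{Step 3 (rate).} With $\eta_t=1/(\mu(t+1))$ the contraction factor is $(t-1)/(t+1)$. Multiplying by $t(t+1)$ and telescoping yields
\[
a_t\le \frac{2a_0+3V^2/\mu^2+\text{(residual)}}{t}\le \frac{C'}{t}.
\]
The factors $2$ and $3$ in $C'$ come from the first two indices, where the weight $t-1$ vanishes, and from $\epsilon_{\mathrm{QP}}\le L_F^2/T^2$.

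\emph{Step 4 (inactivity).} For $t\ge 4C'/D^2$ we get $\|x_t-x^*\|\le D/2$, hence $\|x_t\|\le 3D/2$. Then
\[
\|y_{t+1}\|\le 3D/2+\eta_tV\le 3D/2+\frac{V}{\mu(t+1)}\le R
\]
as soon as $t\ge V/(\mu(R-3D/2))$. Here $R-3D/2>0$ because $R>2D$. Hence $x_{t+1}=y_{t+1}$ for all $t\ge T_1$.

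\emph{Main obstacle.} The hard part is Step~2, specifically the term $F(x^*)^\top(x^*-x_t)$. Because $x_t$ may be infeasible, the Minty inequality cannot be used to make this term nonpositive. Bounded crudely by $2RL_F$, it does not decay, and the telescoping in Step~3 would then only give $a_t=\mathcal O(1)$, which does not yield the $C'/t$ bound.

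My first attempt is to pair $F(x^*)^\top(x^*-x_t)$ with the constraint term $\alpha g_i(x_t)+\nabla g_i(x_t)^\top v_t\le 0$, in the spirit of KKT, so that its positive part is controlled by the violation $\eta_t$-times a velocity bound. That turns the residual into an $\eta_t^2$-order term, which is how $8RL_F/\mu$ would enter $C'$ additively. If this fails, I will instead prove the weaker $a_t=\mathcal O(1)$ bound, use it to show $\max_i g_i(x_t)$ decays through the $\alpha g_i$ term in $\mathcal V_\alpha$, and then bootstrap to the $C'/t$ rate.
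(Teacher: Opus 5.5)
Your architecture matches the paper's: nonexpansiveness of $\proj_{B(0,R)}$ toward $x^*$, a distance recursion with contraction $1-2/(t+1)$, a weighted telescoping (the paper uses $e_t=t^2\|x_t-x^*\|^2$ with a crude bound on $e_1$), and a Step~4 identical to its final step.

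However, Step~2 fails for a reason separate from the one you flagged. You test the approximate optimality condition with $w=\alpha(x^*-x_t)$ and complete the square. After multiplying by $2\eta_t/\alpha$, this leaves the residual $\eta_t\|F(x_t)\|^2/(4\mu)=\Theta(\eta_t)$. Even if the $F(x^*)$ term were harmless, the recursion would then only give a bound of order $L_F^2/\mu^2$, not $C'/t$.

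The missing idea is the KKT/cone structure of the QP used in the proof of Theorem~\ref{thm:strong}. For $i\in I_{x_t}$ you have $\nabla g_i(x_t)^\top(x^*-x_t)\le g_i(x^*)-g_i(x_t)\le 0$. Hence $v_t^*+s(x^*-x_t)\in\widetilde{\mathcal V}_\alpha(x_t)$ for small $s\ge 0$, since the norm bound is inactive at $v_t^*$ by Lemma~\ref{lem:velocity}. Optimality of the exact solution $v_t^*$ then yields $F(x_t)^\top(x_t-x^*)\le -v_t^{*\top}(x_t-x^*)$ with no $\|F\|^2$ loss. Using $\|v_t-v_t^*\|\le\sqrt{\epsilon_{\mathrm{QP}}}$, this becomes $F(x_t)^\top(x_t-x^*)\le -v_t^\top(x_t-x^*)+2R\sqrt{\epsilon_{\mathrm{QP}}}$. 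It is this inexactness term that produces $8RL_F/\mu$ in $C'$, not the $F(x^*)$ term as you conjectured: it contributes $4\eta_tRL_F/T$ to the recursion and is weighted by $(t+1)^2$ with $(t+1)/T\le 2$.

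On the obstacle you did flag, you are right, and the paper does not overcome it. It passes directly to $\mu\|x_t-x^*\|^2\le F(x_t)^\top(x_t-x^*)$. That needs $F(x^*)^\top(x_t-x^*)\ge 0$, i.e.\ the VI at $x^*$ tested at $x_t$, which is guaranteed only for $x_t\in\mathcal C$. The iterates of Algorithm~\ref{alg:strong} are only known to lie in $B(0,R)$. For instance, on the unit disk with $F(x)=x-2e_1$ and $x^*=e_1$, the point $x_t=1.1e_1$ gives $F(x_t)^\top(x_t-x^*)=-0.09<0$.

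Your repairs cannot deliver the stated constant either. The linearized constraints at $x_t$ carry no information about $F(x^*)$. What works instead is one of:
\begin{itemize}
\item KKT at $x^*$, giving $F(x^*)^\top(x_t-x^*)\ge-\sum_i\lambda_i^*[g_i(x_t)]_+$;
\item an error bound, giving $F(x^*)^\top(x_t-x^*)\ge -L_F\,\mathrm{dist}(x_t,\mathcal C)$.
\end{itemize}
Combined with an $\mathcal O(1/t)$ violation bound, the extra term in the recursion is $\mathcal O(\eta_t^2)$ and the $1/t$ rate survives. But $C'$, and hence $T_1$, then depends on the violation constant and on $\|\lambda^*\|$ or an error-bound modulus (e.g.\ via $\sigma_0$ in Assumption~\ref{ass:licq}).

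There is also a circularity. The $\mathcal O(1/t)$ violation bound in Theorem~\ref{thm:strong} is proved only for $t\ge T_1$, i.e.\ after this lemma is used. You would therefore need a joint induction, or a violation bound that also covers iterations where the projection is active.

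Thus neither your proposal nor the paper's argument establishes the lemma with the stated $C'$. Your bootstrap can at best give a version with an enlarged, regularity-dependent constant.
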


\begin{proof}
For any $x^*\in\mathcal{C}$, the unprojected update satisfies
\[
\|y_{t+1}-x^*\|^2 = \|x_t-x^*\|^2 + 2\eta_t v_t^\top(x_t-x^*) + \eta_t^2\|v_t\|^2.
\]
From the QP optimality condition with $w=\alpha(x^*-x_t)\in\widetilde{\mathcal{V}}_\alpha(x_t)$ and strong monotonicity,
\[
\mu\|x_t-x^*\|^2 \le F(x_t)^\top(x_t-x^*) \le -v_t^\top(x_t-x^*) + 2R\sqrt{\epsilon_{\mathrm{QP}}}.
\]
Hence $2\eta_t v_t^\top(x_t-x^*) \le -2\eta_t\mu\|x_t-x^*\|^2 + 4\eta_t R\sqrt{\epsilon_{\mathrm{QP}}}$. Substituting $\eta_t=1/(\mu(t+1))$ and $\|v_t\|\le V$,
\[
\|y_{t+1}-x^*\|^2 \le \Bigl(1-\frac{2}{t+1}\Bigr)\|x_t-x^*\|^2 + \frac{V^2}{\mu^2(t+1)^2} + \frac{4RL_F}{\mu T(t+1)}.
\]
Since projection onto $B(0,R)$ is nonexpansive toward $x^*$ (as $\|x^*\|\le D<R$), $r_{t+1}:=\|x_{t+1}-x^*\|^2\le\|y_{t+1}-x^*\|^2$. Multiplying by $(t+1)^2$ and setting $e_t:=t^2 r_t$ gives, for $t\ge1$,
\[
e_{t+1}\le (t^2-1)r_t + \frac{V^2}{\mu^2} + \frac{4RL_F(t+1)}{\mu T}
\le e_t - r_t + \frac{V^2}{\mu^2} + \frac{8RL_F}{\mu},
\]
where we used $(t+1)/T\le 2$ for all $t\le T-1$ (the algorithm is run for exactly $T$ iterations, so this estimate is required only over the horizon $t\le T$). Since $r_t\ge 0$, we obtain the valid recurrence
\[
e_{t+1}\le e_t + \frac{V^2}{\mu^2} + \frac{8RL_F}{\mu}.
\]
With $e_1=\|x_1-x^*\|^2\le 2\|x_0-x^*\|^2+2\eta_0^2\|v_0\|^2\le 2\|x_0-x^*\|^2+2V^2/\mu^2$, induction yields
\[
e_t \le 2\|x_0-x^*\|^2 + \frac{2V^2}{\mu^2} + (t-1)\Bigl(\frac{V^2}{\mu^2}+\frac{8RL_F}{\mu}\Bigr)
\le \Bigl(2\|x_0-x^*\|^2 + \frac{3V^2}{\mu^2} + \frac{8RL_F}{\mu}\Bigr)t = C' t
\]
for all $t\ge 1$, where the last inequality uses $t\ge 1$, $(t+1)\le 3t$, and $(t-1)\le t$. Therefore $\|x_t-x^*\|\le\sqrt{C'/t}$ for all $t\ge 1$. For $t\ge \lceil 4C'/D^2\rceil$, we have $\|x_t\|\le\|x^*\|+D/2\le 3D/2$. The unprojected update satisfies
\[
\|y_{t+1}\|\le\|x_t\|+\eta_t\|v_t\|\le\frac{3D}{2}+\frac{V}{\mu(t+1)}.
\]
For $t\ge \lceil V/(\mu(R-3D/2))\rceil$, the second term is at most $R-3D/2$, so $\|y_{t+1}\|\le R$. Thus the projection is inactive for all $t\ge T_1$.
\end{proof}

\section{Strongly Monotone Operators: Closing the Feasibility Gap}\label{sec:strong}

We resolve the suboptimal feasibility rate for strongly monotone problems. The algorithm is CGM with a specific parameter choice enabled by Lemma~\ref{lem:velocity}, augmented with a projection safeguard. The safeguard radius $R>2D$ is an algorithmic parameter; $L_F$ is defined as $\sup_{\|x\|\le R}\|F(x)\|$.

\begin{algorithm}[H]
\caption{OPCGM--Strong}
\label{alg:strong}
\begin{algorithmic}[1]
\Require $x_0\in\mathcal{C}$, $\mu>0$, $R>2D$, $T\geq 2$
\State Set $\alpha=2\mu$, $V=4L_F+4\mu R$, and $\eta_t=\frac{1}{\mu(t+1)}$ for $t=0,\dots,T-1$
\For{$t=0,1,\dots,T-1$}
\State Build $I_{x_t}=\{i\in[m]\mid g_i(x_t)\geq 0\}$
\State Construct $\widetilde{\mathcal{V}}_\alpha(x_t)=\{v\mid \alpha g_i(x_t)+\nabla g_i(x_t)^\top v\leq 0,\;\forall i\in I_{x_t};\ \|v\|\leq V\}$
\State Solve $v_t\approx\argmin_{v\in\widetilde{\mathcal{V}}_\alpha(x_t)}\frac{1}{2}\|v+F(x_t)\|^2$ with precision $\epsilon_{\mathrm{QP}}\leq L_F^2/T^2$
\State Update $y_{t+1}=x_t+\eta_t v_t$ and project $x_{t+1}=\min\{1,R/\|y_{t+1}\|\}\,y_{t+1}$
\EndFor
\State \Return $\bar{x}_T=\frac{2}{T(T-1)}\sum_{t=1}^{T-1}t x_t$
\end{algorithmic}
\end{algorithm}

\begin{theorem}\label{thm:strong}
Let Assumptions~\ref{ass:F} and \ref{ass:g} hold, and let $F$ be $\mu$-strongly-monotone. Let $R>2D$ and $V=4L_F+4\mu R$. Algorithm~\ref{alg:strong} with $\epsilon_{\mathrm{QP}}\leq L_F^2/T^2$ satisfies, for all $x\in\mathcal{C}$ and all $T\geq \max\{2,2T_1\}$,
\begin{align}
F(x)^\top(\bar{x}_T-x)&\leq \frac{V^2}{\mu(T-1)}+\frac{2RL_F}{T},\label{eq:strong_opt}\\[4pt]
\max_{i\in[m]}g_i(\bar{x}_T)&\leq \frac{C_{\mathrm{feas}}}{T},\label{eq:strong_feas}
\end{align}
where 
\[
C_{\mathrm{feas}}:=2C+2L_g(R+D)T_1,
\]
with $C$ as defined in the proof and $T_1$ as in Lemma~\ref{lem:proj_inactive}. In particular, the constraint violation rate $\mathcal{O}(1/T)$ is tight: for the special case of strongly convex optimization with linear constraints, this matches the lower bound $\Omega(1/T)$ of \cite{Ouyang2021}.
\end{theorem}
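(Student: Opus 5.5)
The proof splits into the optimality-gap bound \eqref{eq:strong_opt} and the feasibility bound \eqref{eq:strong_feas}. Both are weighted-averaging arguments with weights $t$, matching $\bar{x}_T=\frac{2}{T(T-1)}\sum_{t=1}^{T-1}t x_t$.

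\textbf{Optimality gap.} Fix $x\in\mathcal{C}$. Testing the approximate QP optimality at $x_t$ with $w=\alpha(x-x_t)\in\widetilde{\mathcal{V}}_\alpha(x_t)$ (as in Lemma~\ref{lem:velocity}) gives
\[
(v_t+F(x_t))^\top\bigl(v_t-\alpha(x-x_t)\bigr)\le \epsilon_{\mathrm{QP}}/2 .
\]
Rearranging, and discarding the nonnegative term $\|v_t+F(x_t)\|^2$ where convenient, yields
\[
F(x_t)^\top(x_t-x)\le -v_t^\top(x_t-x)+O\bigl(R\sqrt{\epsilon_{\mathrm{QP}}}\bigr).
\]
I expect the authors' route to the second error term to produce something like $2RL_F/T$, using $\sqrt{\epsilon_{\mathrm{QP}}}\le L_F/T$; the factor $1/\alpha$ needs careful bookkeeping.

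Next use $F(x)^\top(x_t-x)\le F(x_t)^\top(x_t-x)-\mu\|x_t-x\|^2$, which follows from strong monotonicity. For $v_t^\top(x_t-x)$, the nonexpansive projection (valid since $\|x\|\le D<R$) gives
\[
\|x_{t+1}-x\|^2\le \|x_t-x\|^2+2\eta_t v_t^\top(x_t-x)+\eta_t^2V^2 .
\]
Hence
\[
-v_t^\top(x_t-x)\le \frac{\|x_t-x\|^2-\|x_{t+1}-x\|^2}{2\eta_t}+\frac{\eta_tV^2}{2}.
\]
Multiply by $t$, use $\eta_t=1/(\mu(t+1))$, and sum over $t=1,\dots,T-1$. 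The coefficient of $\|x_t-x\|^2$ is $\frac{\mu t(t+1)}{2}-\frac{\mu t(t-1)}{2}-\mu t=0$, so the sum telescopes cleanly. What remains is $\sum_t tV^2/(2\mu(t+1))\le TV^2/(2\mu)$. Dividing by $T(T-1)/2$ gives $V^2/(\mu(T-1))$, and linearity of $F(x)^\top(\cdot-x)$ in the averaged point gives \eqref{eq:strong_opt}.

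\textbf{Feasibility.} By convexity, $g_i(\bar{x}_T)\le\frac{2}{T(T-1)}\sum t\,g_i(x_t)$, so it suffices to bound $\sum_t t\,g_i(x_t)$. Split at $T_1$. For $t<T_1$, use $g_i(x_t)\le L_g\,\mathrm{dist}(x_t,\mathcal{C})\le L_g(R+D)$; this contributes at most $L_g(R+D)T_1\cdot T$ to the weighted sum, hence $2L_g(R+D)T_1/T$ after normalization (using $T-1\ge T/2$). For $t\ge T_1$, Lemma~\ref{lem:proj_inactive} says $x_{t+1}=x_t+\eta_tv_t$. Since $\ell_g$-smoothness gives
\[
g_i(x_{t+1})\le g_i(x_t)+\eta_t\nabla g_i(x_t)^\top v_t+\tfrac{\ell_g}{2}\eta_t^2V^2 ,
\]
for active $i$ with $g_i(x_t)\ge0$ the polytope constraint $\nabla g_i(x_t)^\top v_t\le-\alpha g_i(x_t)$ yields
\[
g_i(x_{t+1})\le (1-\alpha\eta_t)g_i(x_t)+\tfrac{\ell_g}{2}\eta_t^2V^2 .
\]
With $\alpha=2\mu$, this is $(1-\frac{2}{t+1})g_i(x_t)+\frac{\ell_gV^2}{2\mu^2(t+1)^2}$. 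For inactive $i$, $g_i(x_t)<0$, and the increment is at most $\eta_tL_gV$. I would instead handle this through the positive part $g_i^+$, using that a constraint turning from inactive to violated must have $g_i^+(x_{t+1})\le \eta_t|\nabla g_i^\top v_t|+O(\eta_t^2)$. The cleaner approach is to show $g_i^+(x_{t+1})\le(1-\frac{2}{t+1})g_i^+(x_t)+O(1/t^2)$ whenever $g_i(x_t)\ge0$, and $g_i^+(x_{t+1})\le O(1/t)$ otherwise.

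Multiplying the active recursion by $(t+1)^2$, exactly as in Lemma~\ref{lem:proj_inactive}, gives
\[
(t+1)^2g_i^+(x_{t+1})\le t^2g_i^+(x_t)+\frac{\ell_gV^2}{2\mu^2},
\]
so $g_i^+(x_t)=O(1/t)$. Summing $t\,g_i^+(x_t)$ then gives $O(T)$, which normalizes to $C/T$ with $C$ collecting $\ell_gV^2/\mu^2$, $L_gV/\mu$ and $T_1^2$ terms; the factor $2$ in $2C$ comes from $T-1\ge T/2$.

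\textbf{Main obstacle.} The hardest step is constraints that switch from inactive to active. There the polytope gives no decrease, and a naive bound $g_i^+(x_{t+1})\le L_gV/(\mu(t+1))$ is still $O(1/t)$. I would check that this is compatible with the $t^2$-weighted recursion by taking the maximum of the two cases at each step, i.e.\ $t^2g^+_i(x_t)\le \max\{\text{previous},\,L_gVt/\mu\}+\dots$. This gives $g_i^+(x_t)=O(1/t)$ uniformly, which suffices since the weighted average only needs $\sum_t t\cdot O(1/t)=O(T)$.
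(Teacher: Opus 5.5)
Your overall architecture matches the paper's: $t$-weighted telescoping with $\eta_t=1/(\mu(t+1))$ for the gap, and a split at $T_1$ with the contraction $1-\alpha\eta_t=1-\tfrac{2}{t+1}$ for feasibility. Your coefficient check and the $V^2/(\mu(T-1))$ term are correct. Your max-trick on $t^2g_i^+(x_t)$ for constraints that switch from inactive to violated is a valid substitute for the paper's direct induction $g_i(y_{t+1})\le C/(t+1)$.

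\textbf{The gap.} It lies in the one inequality the optimality bound rests on. Testing the approximate QP condition only at $w=\alpha(x-x_t)$ and rearranging gives
\[
\alpha\,(v_t+F(x_t))^\top(x_t-x)\;\le\;\tfrac{\epsilon_{\mathrm{QP}}}{2}-(v_t+F(x_t))^\top v_t .
\]
This single test gives no control on the sign of $(v_t+F(x_t))^\top v_t$. Dropping $\|v_t+F(x_t)\|^2$ merely leaves $(v_t+F(x_t))^\top F(x_t)$ on the right, which is neither small nor signed. So $F(x_t)^\top(x_t-x)\le -v_t^\top(x_t-x)+O(R\sqrt{\epsilon_{\mathrm{QP}}})$ does not follow from what you wrote.

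\textbf{The missing ingredient.} It is the structure of the polytope: for $x\in\mathcal{C}$ and active $i$, convexity gives $\nabla g_i(x_t)^\top(x_t-x)\ge g_i(x_t)\ge 0$. The paper exploits this through the KKT conditions of the exact minimizer $v_t^*$, whose norm safeguard is inactive by Lemma~\ref{lem:velocity}.
\begin{itemize}
\item From $v_t^*+F(x_t)=-\sum_i\lambda_i\nabla g_i(x_t)$ with $\lambda_i\ge 0$, it gets $F(x_t)^\top(x_t-x)\le v_t^{*\top}(x-x_t)$ with no error.
\item Then $\|v_t-v_t^*\|\le\sqrt{\epsilon_{\mathrm{QP}}}$ and $\|x-x_t\|\le 2R$ give a per-step error $2R\sqrt{\epsilon_{\mathrm{QP}}}$, free of any $1/\alpha$.
\item Summed with weights $t$, this is exactly the $2RL_F/T$ term.
\end{itemize}

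\textbf{A primal alternative (for $L_F>0$).} Test at $w=v_t+s\alpha(x-x_t)$. This point satisfies the linear constraints by the same convexity fact. It stays in the norm ball for $s=L_F/(\alpha(R+D))$, since $\|v_t\|\le 2\alpha R+3L_F$. The test then gives $(v_t+F(x_t))^\top(x_t-x)\le \epsilon_{\mathrm{QP}}(R+D)/(2L_F)$.

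\textbf{Why patching your test point is not enough.} You could keep $w=\alpha(x-x_t)$ and use closeness to $v_t^*$ together with complementary slackness, $(v_t^*+F(x_t))^\top v_t^*=\alpha\sum_i\lambda_i g_i(x_t)\ge 0$. That leaves an error of order $\sqrt{\epsilon_{\mathrm{QP}}}\,(R+L_F/\mu)$. This still gives $O(1/T)$, but not the stated constant $2RL_F/T$.

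\textbf{A minor constant issue.} To land exactly on $C_{\mathrm{feas}}$, use $\sum_{t\le T_1}t\le T_1(T_1+1)/2$ and $\sum_{t=T_1+1}^{T-1}t\cdot C/t\le C(T-1)$. Your cruder bounds $L_g(R+D)T_1\cdot T$ and $O(T)$ lose a factor of $2$ after dividing by $T(T-1)/2$.
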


\begin{proof}
We analyze the unprojected iterates $y_{t+1}=x_t+\eta_t v_t$ and transfer bounds to the projected iterates $x_{t+1}=\proj_{B(0,R)}(y_{t+1})$. Since $x_{t+1}=\proj_{B(0,R)}(y_{t+1})$ and every $x\in\mathcal{C}$ satisfies $\|x\|\leq D\leq R$, the projection is nonexpansive toward any such $x$:
\begin{equation}\label{eq:nonexp}
\|x_{t+1}-x\|\leq\|y_{t+1}-x\|,\qquad \forall x\in\mathcal{C}.
\end{equation}

\paragraph{Optimality.} Let $v_t^*$ denote the exact minimizer of the QP over $\widetilde{\mathcal{V}}_\alpha(x_t)$. From the KKT conditions (where the norm constraint is inactive by Lemma~\ref{lem:velocity}), we have
\[
v_t^*+F(x_t)+\sum_{i\in I_{x_t}}\lambda_i\nabla g_i(x_t)=0,\qquad \lambda_i\geq 0.
\]
For any $x\in\mathcal{C}$ and active $i\in I_{x_t}$, convexity gives $\nabla g_i(x_t)^\top(x_t-x)\geq g_i(x_t)-g_i(x)\geq g_i(x_t)\geq 0$. Therefore
\[
F(x_t)^\top(x_t-x)=-v_t^{*\top}(x_t-x)-\sum_{i\in I_{x_t}}\lambda_i\nabla g_i(x_t)^\top(x_t-x)\leq v_t^{*\top}(x-x_t).
\]
For the approximate solution $v_t$, strong convexity of the QP objective (with parameter $1$) yields $\|v_t-v_t^*\|^2\leq\epsilon_{\mathrm{QP}}$. Hence by Cauchy--Schwarz,
\[
F(x_t)^\top(x_t-x)\leq v_t^\top(x-x_t)+\|v_t-v_t^*\|\|x-x_t\|
\leq v_t^\top(x-x_t)+2R\sqrt{\epsilon_{\mathrm{QP}}},
\]
using $\|x\|\leq D\leq R$ and $\|x_t\|\leq R$ (by projection). Using the unprojected update $y_{t+1}=x_t+\eta_t v_t$ and the polarization identity,
\[
v_t^\top(x-x_t)=\frac{1}{\eta_t}(y_{t+1}-x_t)^\top(x-x_t)
=\frac{1}{2\eta_t}\bigl(\|x-x_t\|^2-\|x-y_{t+1}\|^2+\|y_{t+1}-x_t\|^2\bigr).
\]
By nonexpansiveness \eqref{eq:nonexp}, $\|x-x_{t+1}\|\leq\|x-y_{t+1}\|$, so $-\|x-y_{t+1}\|^2\leq-\|x-x_{t+1}\|^2$. Strong monotonicity gives $F(x)^\top(x_t-x)\leq F(x_t)^\top(x_t-x)-\mu\|x_t-x\|^2$, so
\[
F(x)^\top(x_t-x)\leq\Big(\frac{1}{2\eta_t}-\mu\Big)\|x-x_t\|^2-\frac{1}{2\eta_t}\|x-x_{t+1}\|^2+\frac{\eta_t}{2}\|v_t\|^2+2R\sqrt{\epsilon_{\mathrm{QP}}}.
\]
With $\alpha=2\mu$ and $\eta_t=1/(\mu(t+1))$, we have $\frac{1}{2\eta_t}-\mu=\frac{\mu(t-1)}{2}$. Multiplying by $t$ and summing from $t=0$ to $T-1$ yields a telescoping series. The distance terms telescope to a non-positive quantity, as verified by expanding the coefficients: the sum of distance terms is
\[
\sum_{t=0}^{T-1}t\Bigl[\Big(\frac{\mu(t-1)}{2}\Bigr)\|x-x_t\|^2-\frac{\mu(t+1)}{2}\|x-x_{t+1}\|^2\Bigr]
=\frac{\mu}{2}\sum_{t=0}^{T-1}\bigl[t(t-1)\|x-x_t\|^2-(t+1)t\|x-x_{t+1}\|^2\bigr]
\]
which telescopes to $-\frac{\mu T(T-1)}{2}\|x-x_T\|^2\leq 0$. The velocity term satisfies
\[
\sum_{t=0}^{T-1}\frac{t\eta_t}{2}\|v_t\|^2\leq \frac{V^2}{2\mu}\sum_{t=0}^{T-1}\frac{t}{t+1}\leq \frac{V^2T}{2\mu},
\]
and the approximation error satisfies
\[
\sum_{t=0}^{T-1}2tR\sqrt{\epsilon_{\mathrm{QP}}}\leq T(T-1)R\sqrt{\epsilon_{\mathrm{QP}}}\leq (T-1)RL_F,
\]
using $\epsilon_{\mathrm{QP}}\leq L_F^2/T^2$. Dividing by $\sum_{t=0}^{T-1}t=T(T-1)/2$ gives \eqref{eq:strong_opt}.

\paragraph{Feasibility.} By Lemma~\ref{lem:proj_inactive}, the projection is inactive for all $t\ge T_1$. For $t\geq T_1$, we have $x_t=y_t$ and the analysis of the unprojected iterates applies directly. Let
\[
C:=\max\Bigl\{\frac{2L_gV}{\mu},\;\frac{\ell_gV^2}{2\mu^2},\;(T_1+1)\cdot\max_{i\in[m]}g_i(y_{T_1+1})\Bigr\}.
\]
Because $y_{T_1+1}=x_{T_1}+\eta_{T_1}v_{T_1}$ and $\|x_{T_1}\|\le R$, $\|v_{T_1}\|\le V$, the quantity $\max_i g_i(y_{T_1+1})$ is a finite problem-dependent constant, so $C$ is well-defined. We prove by induction that $g_i(y_{t+1})\leq C/(t+1)$ for all $t\geq T_1$ and $i\in[m]$. The base case $t=T_1$ holds by definition of $C$. For the inductive step, assume $g_i(y_t)\leq C/t$. Consider two cases.

\textit{Case 1: $i\notin I_{x_t}$ (inactive).} Then $g_i(x_t)<0$. By the smoothness of $g_i$ (descent lemma) and the Cauchy--Schwarz inequality,
\begin{align*}
g_i(y_{t+1})&\leq g_i(x_t)+\nabla g_i(x_t)^\top(y_{t+1}-x_t)+\frac{\ell_g}{2}\|y_{t+1}-x_t\|^2\\
&< \eta_t|\nabla g_i(x_t)^\top v_t|+\frac{\ell_g}{2}\|y_{t+1}-x_t\|^2\\
&\leq \eta_t L_g\|v_t\|+\frac{\ell_g\eta_t^2}{2}\|v_t\|^2\\
&\leq \frac{L_gV}{\mu(t+1)}+\frac{\ell_gV^2}{2\mu^2(t+1)^2}.
\end{align*}
Since $C\geq 2L_gV/\mu$, we have $\frac{L_gV}{\mu(t+1)}\leq \frac{C}{2(t+1)}$. Moreover, for $t\geq T_1\geq 1$, $\frac{\ell_gV^2}{2\mu^2(t+1)^2}\leq \frac{C}{2(t+1)}$ because $C\geq \ell_gV^2/(2\mu^2)$ and $1/(t+1)\leq 1/2$. Therefore $g_i(y_{t+1})\leq C/(t+1)$.

\textit{Case 2: $i\in I_{x_t}$ (active).} Then $g_i(x_t)\geq 0$ and $\alpha g_i(x_t)+\nabla g_i(x_t)^\top v_t\leq 0$, so $\nabla g_i(x_t)^\top v_t\leq -\alpha g_i(x_t)$. By smoothness,
\begin{align*}
g_i(y_{t+1})&\leq g_i(x_t)+\nabla g_i(x_t)^\top(y_{t+1}-x_t)+\frac{\ell_g}{2}\|y_{t+1}-x_t\|^2\\[2pt]
&\leq (1-\alpha\eta_t)g_i(x_t)+\frac{\ell_g\eta_t^2}{2}\|v_t\|^2\\[2pt]
&=\Big(1-\frac{2}{t+1}\Big)g_i(x_t)+\frac{\ell_gV^2}{2\mu^2(t+1)^2}.
\end{align*}
Using the inductive hypothesis $g_i(x_t)\leq C/t$,
\[
(t+1)^2 g_i(y_{t+1})\leq (t+1)(t-1)\frac{C}{t}+\frac{\ell_gV^2}{2\mu^2}
=\frac{(t^2-1)C}{t}+\frac{\ell_gV^2}{2\mu^2}.
\]
We claim this is at most $C(t+1)$. Rearranging, we need
\[
\frac{(t^2-1)C}{t}+\frac{\ell_gV^2}{2\mu^2}\leq C(t+1)
\;\Longleftrightarrow\;
C\Big(t+1-\frac{t^2-1}{t}\Big)\geq \frac{\ell_gV^2}{2\mu^2}
\;\Longleftrightarrow\;
C\Big(\frac{t+1}{t}\Big)\geq \frac{\ell_gV^2}{2\mu^2}.
\]
Since $(t+1)/t\geq 1$ for $t\geq 1$, the inequality holds provided $C\geq \ell_gV^2/(2\mu^2)$, which is true by definition. Thus $g_i(y_{t+1})\leq C/(t+1)$.

For the projected iterates with $t<T_1$, we have $\|x_t\|\le R$ and, since $x^*\in\mathcal{C}$ is feasible with $\|x^*\|\le D$, Lipschitz continuity gives $g_i(x_t)\le g_i(x^*)+L_g\|x_t-x^*\|\le L_g(R+D)$. The weighted average satisfies, for $T\ge \max\{2,2T_1\}$,
\[
g_i(\bar{x}_T)\leq \frac{2}{T(T-1)}\Bigl(\sum_{t=1}^{T_1}t\,L_g(R+D)+\sum_{t=T_1+1}^{T-1}t\cdot\frac{C}{t}\Bigr)
\leq \frac{L_g(R+D)T_1(T_1+1)}{T(T-1)}+\frac{2C(T-1-T_1)}{T(T-1)}.
\]
Since $T\ge 2T_1\ge 2$, we have $T-1\ge T/2$ and $T(T-1)\ge T\cdot T_1$, whence the first term is at most $2L_g(R+D)T_1/T$. Absorbing this into the constant yields \eqref{eq:strong_feas} with $C_{\mathrm{feas}}=2C+2L_g(R+D)T_1$.
\end{proof}

\begin{remark}
Theorem~\ref{thm:strong} eliminates the suboptimal exponent $c_\gamma<1$ in \cite[Theorem~2]{Zhang2025}. The key insight is that Lemma~\ref{lem:velocity} provides a velocity bound independent of the auxiliary constraint, allowing $\alpha=2\mu$ and $\alpha\eta_t=2/(t+1)$. The recursion then yields $\mathcal{O}(1/T)$ feasibility via the weighted telescoping $(t+1)^2 g_i(y_{t+1})\leq t^2 g_i(y_t)+\mathrm{const}$. Lemma~\ref{lem:proj_inactive} rigorously establishes that the projection safeguard is active for only finitely many iterations, with the effect absorbed into the constant.
\end{remark}

\section{Lipschitz Monotone Operators: Gap-Optimal Complexity and Feasibility Trade-offs}\label{sec:lipschitz}

We assume that $F$ is $L$-Lipschitz and present a primal extragradient method that achieves the optimal $\mathcal{O}(1/\epsilon)$ gap complexity. The algorithm uses two QP calls per iteration but never requires a projection onto $\mathcal{C}$. The feasibility analysis reveals that the averaged iterate satisfies an explicit asymptotic feasibility bound $g^*>0$. Theorem~\ref{thm:feas_lb} below shows that the \emph{first} half-step is necessarily infeasible, and that this infeasibility is unavoidable for the natural class of constant-stepsize primal extragradient methods on smooth convex constraints with positive curvature at the boundary.

We first establish a stability property of the parametric QP that is essential for the extragradient analysis.

\begin{lemma}[Lipschitz continuity of the parametric QP]\label{lem:qp_lip}
Let $v^*(x)$ denote the exact minimizer of $\min_{w\in\widetilde{\mathcal{V}}_\alpha(x)}\frac{1}{2}\|w+F(x)\|^2$. Under Assumptions~\ref{ass:F}, \ref{ass:g}, and \ref{ass:licq}, for any $x,y\in\mathbb{R}^d$ with $\|x\|,\|y\|\leq R$ and $\|x-y\|\le \delta$ where $\delta:=\sigma_0^2/(4\ell_g)$,
\[
\|v^*(x)-v^*(y)\|\leq L_{\mathrm{QP}}\|x-y\|,
\]
where 
\[
L_{\mathrm{QP}}:=\frac{2}{\sigma_0^2}\Bigl(L+\alpha L_g\Bigr)+\frac{4\ell_g}{\sigma_0^4}\Bigl(L_F+\alpha G_{\max}+\|F(0)\|+LR\Bigr),
\]
and $G_{\max}:=\max_{i\in[m]}\sup_{\|z\|\leq R}|g_i(z)|$.
\end{lemma}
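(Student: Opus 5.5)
The plan is to treat $v^*(x)$ as the projection of $-F(x)$ onto the polyhedron $\mathcal{V}_\alpha(x)$. By Lemma~\ref{lem:velocity} the norm safeguard is inactive at the exact minimizer, so $v^*(x)=\proj_{\mathcal{V}_\alpha(x)}(-F(x))$ and the safeguard can be ignored. The distance is then split by the triangle inequality:
\[
\|v^*(x)-v^*(y)\|\le \|\proj_{\mathcal{V}_\alpha(x)}(-F(x))-\proj_{\mathcal{V}_\alpha(x)}(-F(y))\|+\|\proj_{\mathcal{V}_\alpha(x)}(-F(y))-\proj_{\mathcal{V}_\alpha(y)}(-F(y))\|.
\]
The first term is at most $L\|x-y\|$ by nonexpansiveness and the Lipschitz continuity of $F$. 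The second term is the perturbation of a projection onto a moving polyhedron, and it is where the constants $\sigma_0^{-2}$ and $\ell_g\sigma_0^{-4}$ should come from.

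To handle the moving set, I will use the KKT form from the proof of Theorem~\ref{thm:strong}:
\[
v^*(x)=-F(x)-G_{A}(x)^\top\lambda(x),
\]
where $A$ is the set of constraints binding at the solution. Under Assumption~\ref{ass:licq}, $G_A G_A^\top\succeq\sigma_0^2 I$, so the multipliers are uniquely determined by
\[
\lambda=(G_AG_A^\top)^{-1}\bigl(\alpha g_A(x)-G_A F(x)\bigr).
\]
Hence $\|\lambda\|\le\sigma_0^{-2}\|\alpha g_A-G_AF\|$, which is controlled by $\alpha G_{\max}$, $L_F$, and $\|F(0)\|+LR$ (the latter via Lipschitzness, $\|F(y)\|\le\|F(0)\|+LR$). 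For a fixed active set, differentiate, or take finite differences of, the map
\[
x\mapsto -F-G_A^\top(G_AG_A^\top)^{-1}(\alpha g_A-G_AF).
\]
The changes come from two sources. The first is $F$ and $g_A$, which are $L$- and $L_g$-Lipschitz; these give the term $\frac{2}{\sigma_0^2}(L+\alpha L_g)$ after bounding $\|G_A^\top(G_AG_A^\top)^{-1}\|\le\sigma_0^{-1}$ and absorbing a factor. The second is $G_A$, which is $\ell_g$-Lipschitz; perturbing $(G_AG_A^\top)^{-1}$ costs roughly $\ell_g\sigma_0^{-4}$ times the bracket bounding $\|\alpha g_A-G_AF\|$, which gives the second term of $L_{\mathrm{QP}}$. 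The restriction $\|x-y\|\le\delta=\sigma_0^2/(4\ell_g)$ keeps the smallest singular value of $G_A(z)$ at least $\sigma_0/2$ along the segment, since $\|G_A(x)-G_A(y)\|\le\ell_g\delta$ is small relative to $\sigma_0$. This is what makes the inverse perturbation bound valid and explains the factors $2$ and $4$.

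The main obstacle is that the active set $I_x$, and the binding set $A$, change between $x$ and $y$; a constraint can even enter $I_x$ as $g_i$ crosses zero. My first attempt will parametrize the segment $z_s=x+s(y-x)$ and apply the fixed-active-set estimate piecewise. At a breakpoint where $g_i(z_s)=0$, the constraint $\alpha g_i+\nabla g_i^\top v\le0$ reduces to $\nabla g_i^\top v\le 0$, and the QP solution is continuous there because it is the projection onto a polyhedron whose description varies continuously. Under LICQ it is piecewise Lipschitz with uniform constant, so summing the pieces gives the global bound. If finitely many breakpoints cannot be guaranteed, I will fall back on a direct argument via the projection characterization. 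Take $w=\proj_{\mathcal{V}_\alpha(x)}(u)$ and $w'=\proj_{\mathcal{V}_\alpha(y)}(u)$. I will build nearby feasible points, $w+G_A(y)^\top(G_A(y)G_A(y)^\top)^{-1}r$ where $r$ is the constraint residual, which is $O((\alpha L_g+\ell_g\|w\|)\|x-y\|)$. Using $\|w\|\le$ the velocity bound and Hoffman-type error bounds with constant $\sigma_0^{-1}$, this yields the same form of $L_{\mathrm{QP}}$.
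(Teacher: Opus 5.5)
\textbf{There is a genuine gap, and it cannot be repaired: the lemma is false as stated.} The step that fails is your treatment of breakpoints where a constraint enters or leaves the index set.

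The set $I_x=\{i: g_i(x)\ge 0\}$ is a hard threshold. For $g_i(x)<0$ constraint $i$ is simply absent. At $g_i(x)=0$ it appears as the full halfspace $\{v:\nabla g_i(x)^\top v\le 0\}$. So the polyhedron does \emph{not} vary continuously across $g_i=0$; it is continuous only from the side $g_i\ge 0$.

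Here is a counterexample. Take $d=2$, the unit-disk constraint $g(z)=\frac12(\|z\|^2-1)$ from Theorem~\ref{thm:feas_lb}, and the constant operator $F\equiv(-1,0)$.
\begin{itemize}
\item $F$ is $L$-Lipschitz for every $L\ge 0$, with $L_F=1$.
\item LICQ holds with $\sigma_0=1$, because $\|\nabla g(z)\|=\|z\|\ge 1$ whenever $g(z)\ge 0$.
\item If you insist on global Lipschitzness of $g$, use $\sqrt{1+\|z\|^2}-\sqrt2$ instead; the same conclusion holds with $\sigma_0=1/\sqrt2$.
\end{itemize}
At $x=(1,0)$ we have $I_x=\{1\}$, the polytope is $\{v: v_1\le 0,\ \|v\|\le V\}$, and $v^*(x)=(0,0)$. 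At $y=(1-s,0)$ with small $s>0$ we have $I_y=\emptyset$, so $v^*(y)=-F=(1,0)$; the safeguard is inactive since $V\ge 4$. Thus $\|v^*(x)-v^*(y)\|=1$ while $\|x-y\|=s\le\delta$, so no finite $L_{\mathrm{QP}}$ works. Summing piecewise Lipschitz estimates along the segment breaks exactly at such an entry point.

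Your fallback fails for the same reason. For a constraint present in one index set but not the other, the residual of the old solution in that constraint can be of order one (it equals $1$ above), not $O(\|x-y\|)$. Even with a common index set, a feasible-point/Hoffman construction combined with the two projection inequalities only yields $\|w-w'\|^2=O(\|x-y\|)$, i.e.\ a H\"older-$\tfrac12$ bound, unless you also exploit the multiplier structure.

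The paper's own proof does not get past this either. It writes one KKT system for the binding set $\mathcal{A}(x)$ and perturbs that same system to $y$, which silently assumes the active set is unchanged. That is precisely the obstacle you identified.

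What your fixed-index-set computation can legitimately deliver is a local Lipschitz bound of this type for pairs with $I_x=I_y$, e.g.\ via strong regularity of the QP under LICQ. A statement covering boundary crossings would need a velocity polytope whose list of constraints varies continuously in $x$.
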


\begin{proof}
The QP at $x$ has objective $f_x(w)=\frac{1}{2}\|w+F(x)\|^2$ and feasible set $\widetilde{\mathcal{V}}_\alpha(x)=\{w:\|w\|\leq V,\; a_i(x)^\top w\leq b_i(x)\;\forall i\in I_x\}$ where $a_i(x)=\nabla g_i(x)$ and $b_i(x)=-\alpha g_i(x)$. By Lemma~\ref{lem:velocity}, the exact minimizer $v^*(x)$ satisfies $\|v^*(x)\|\leq 2\alpha R+2L_F<V$, so the norm constraint is inactive. Let $\mathcal{A}(x)$ denote the active set at $v^*(x)$. The KKT system is
\[
\begin{bmatrix} I & A_{\mathcal{A}}(x)^\top \\ A_{\mathcal{A}}(x) & 0 \end{bmatrix}
\begin{bmatrix} v^*(x) \\ \lambda(x) \end{bmatrix}
=
\begin{bmatrix} -F(x) \\ b_{\mathcal{A}}(x) \end{bmatrix},
\]
where $A_{\mathcal{A}}(x)$ has rows $\{a_i(x)^\top\}_{i\in\mathcal{A}(x)}$. Under Assumption~\ref{ass:licq}, $\sigma_{\min}(A_{\mathcal{A}}(x))\ge\sigma_0$, so the KKT matrix $M(x)$ is invertible with 
\[
\|M(x)^{-1}\|\le \frac{2}{\sigma_0^2}.
\]
The right-hand side is Lipschitz: $\|F(x)-F(y)\|\le L\|x-y\|$, and 
\[
\|b_{\mathcal{A}}(x)-b_{\mathcal{A}}(y)\| \le \alpha L_g\|x-y\|
\]
since each $g_i$ is $L_g$-Lipschitz on $B(0,R)$. The matrix $M(x)$ varies as $\|M(x)-M(y)\|\le \ell_g\|x-y\|$ because each row of $A_{\mathcal{A}}$ is $\ell_g$-Lipschitz. By standard perturbation theory for linear systems, for $\|x-y\|\le\delta$ with $\delta=\sigma_0^2/(4\ell_g)$, we have $\|\Delta M\|\,\|M(x)^{-1}\|\le 1/2$, and therefore
\[
\|v^*(x)-v^*(y)\| \le \|M(x)^{-1}\|\Bigl(\|\Delta\mathrm{rhs}\|+\|\Delta M\|\,\|M(y)^{-1}\|\,\|\mathrm{rhs}(y)\|\Bigr).
\]
Now $\|\mathrm{rhs}(y)\|\le \|F(y)\|+\alpha\|g_{\mathcal{A}}(y)\|\le \|F(0)\|+LR+\alpha G_{\max}$. Substituting the bounds yields the stated constant after simplification.
\end{proof}

\begin{algorithm}[H]
\caption{OPCGM--Lipschitz}
\label{alg:lipschitz}
\begin{algorithmic}[1]
\Require $x_0\in\mathcal{C}$, $L>0$, $R>2D$, $T\geq 1$
\State Set $\alpha=L$, $L_{\mathrm{QP}}$ as in Lemma~\ref{lem:qp_lip}, $\eta=\min\{\frac{1}{4L},\frac{1}{8L_{\mathrm{QP}}},\frac{\delta}{2V}\}$, $V=4L_F+2LR$
\For{$t=0,1,\dots,T-1$}
\State Construct $\widetilde{\mathcal{V}}_\alpha(x_t)=\{v\mid \alpha g_i(x_t)+\nabla g_i(x_t)^\top v\leq 0,\;\forall i\in I_{x_t};\ \|v\|\leq V\}$
\State $v_t\approx\argmin_{v\in\widetilde{\mathcal{V}}_\alpha(x_t)}\frac{1}{2}\|v+F(x_t)\|^2$ with $\epsilon_{\mathrm{QP}}\leq L_F^2/T^2$
\State $x_{t+1/2}=x_t+\eta v_t$ \Comment{Half-step is unprojected (oracle query only)}
\State Construct $\widetilde{\mathcal{V}}_\alpha(x_{t+1/2})$ similarly and solve for $w_t$ with $\epsilon_{\mathrm{QP}}\leq L_F^2/T^2$
\State $y_{t+1}=x_t+\eta w_t$ and $x_{t+1}=\min\{1,R/\|y_{t+1}\|\}\,y_{t+1}$
\EndFor
\State \Return $\bar{x}_T=\frac{1}{T}\sum_{t=0}^{T-1}x_{t+1/2}$
\end{algorithmic}
\end{algorithm}

\begin{remark}
In Algorithm~\ref{alg:lipschitz}, the half-step $x_{t+1/2}=x_t+\eta v_t$ is not projected onto $B(0,R)$; it serves only as the query point for the second QP. Because $\eta\le\delta/(2V)$, we have $\|x_{t+1/2}\|\le\|x_t\|+\eta V\le R+\delta/2$. We therefore understand $L_F$ in Assumption~\ref{ass:F} as the supremum of $\|F(x)\|$ over the slightly larger ball $B(0,R+\delta/2)$, which is finite by continuity.
\end{remark}

\begin{theorem}\label{thm:lipschitz}
Let Assumptions~\ref{ass:F}, \ref{ass:g}, and \ref{ass:licq} hold, and let $F$ be $L$-Lipschitz. Let $R>2D$ and let Algorithm~\ref{alg:lipschitz} be run with $\alpha=L$, $\eta=\min\{1/(4L),1/(8L_{\mathrm{QP}}),\delta/(2V)\}$, and $V=4L_F+2LR$. Algorithm~\ref{alg:lipschitz} with $\epsilon_{\mathrm{QP}}\leq L_F^2/T^2$ satisfies, for all $x\in\mathcal{C}$ and $T\geq 1$,
\begin{align}
F(x)^\top(\bar{x}_T-x)&\leq \frac{\|x_0-x\|^2}{2\eta T}+\frac{4\eta L_F^2+2R'L_F}{T},\label{eq:lip_opt}\\[4pt]
\max_{i\in[m]}g_i(\bar{x}_T)&\leq g^*+\frac{C_1}{T},\label{eq:lip_feas}
\end{align}
where $R':=R+\delta/2+D$, $g^*=\frac{3L_gV}{L}+\frac{\ell_gV^2}{4L^2}$
and $C_1$ is an explicit problem-dependent constant (see proof). Consequently, the optimality gap is $\epsilon$-accurate in $T=\mathcal{O}(L/\epsilon)$ iterations, and the feasibility violation is bounded by $g^*+O(1/T)$.
\end{theorem}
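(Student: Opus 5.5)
The gap bound \eqref{eq:lip_opt} will follow the standard extragradient (mirror-prox) argument, with the projection onto $\mathcal{C}$ replaced by the QP optimality inequality. The feasibility bound \eqref{eq:lip_feas} will come from a descent-lemma estimate applied to each half-step, followed by convexity of $g_i$ over the average. Throughout, fix $x\in\mathcal{C}$, so $\|x\|\le D$.

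\textbf{Step 1: per-iteration inequality for the gap.} Let $w_t^*$ be the exact QP minimizer at $x_{t+1/2}$. As in the proof of Theorem~\ref{thm:strong}, I will use the KKT conditions, with the norm constraint inactive by Lemma~\ref{lem:velocity}, together with convexity of the active constraints. This gives
\[
F(x_{t+1/2})^\top(x_{t+1/2}-x)\le w_t^{*\top}(x-x_{t+1/2}).
\]
Replacing $w_t^*$ by $w_t$ costs $\sqrt{\epsilon_{\mathrm{QP}}}\,\|x-x_{t+1/2}\|\le R'L_F/T$, since $\|x_{t+1/2}\|\le R+\delta/2$.

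Next I split $x-x_{t+1/2}=(x-y_{t+1})+(y_{t+1}-x_{t+1/2})$. Since $y_{t+1}-x_t=\eta w_t$, the first piece is handled by the three-point identity:
\[
\eta\, w_t^\top(x-y_{t+1})=\tfrac12\|x-x_t\|^2-\tfrac12\|x-y_{t+1}\|^2-\tfrac12\|y_{t+1}-x_t\|^2 .
\]
I then apply the nonexpansiveness \eqref{eq:nonexp}. The second piece is $\eta\, w_t^\top(y_{t+1}-x_{t+1/2})=\eta^2 w_t^\top(w_t-v_t)$.

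\textbf{Step 2: the cross term (the main obstacle).} In projection-based extragradient one uses the optimality of the first projection at the point $y_{t+1}$. Here this is not available, because $y_{t+1}-x_t$ need not lie in $\widetilde{\mathcal V}_\alpha(x_t)$. I would instead write
\[
w_t^\top(w_t-v_t)=\|w_t-v_t\|^2+v_t^\top(w_t-v_t),
\]
and control $\|w_t-v_t\|$ by Lemma~\ref{lem:qp_lip}. This applies because $\|x_{t+1/2}-x_t\|\le\eta V\le\delta/2$, so $\|w_t-v_t\|\le L_{\mathrm{QP}}\eta\|v_t\|+2\sqrt{\epsilon_{\mathrm{QP}}}$.

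With $\eta\le 1/(8L_{\mathrm{QP}})$, the quadratic term $\eta^2 L_{\mathrm{QP}}^2\|v_t\|^2$ is dominated by the negative term $-\tfrac12\|y_{t+1}-x_t\|^2$ up to a residual of order $\eta^2 L_F^2$. The term $v_t^\top(w_t-v_t)$ is the delicate one. I would bound it using the approximate QP optimality at $x_t$ tested against $w=\alpha(x^*-x_t)$, together with $\eta\le 1/(4L)$. If an exact telescoping cannot be obtained, the leftover of size $\eta^2 V^2$ per step must be tracked. The stated constant $4\eta L_F^2$ suggests the authors absorb it after dividing by $\eta$, and I would aim for exactly that.

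\textbf{Step 3: summation.} Summing over $t=0,\dots,T-1$ and telescoping $\tfrac{1}{2\eta}\|x-x_t\|^2$ gives the bound in terms of $\sum_t F(x_{t+1/2})^\top(x_{t+1/2}-x)$. Monotonicity then gives $F(x)^\top(x_{t+1/2}-x)\le F(x_{t+1/2})^\top(x_{t+1/2}-x)$, and linearity in the average yields \eqref{eq:lip_opt}. The complexity $T=\mathcal O(L/\epsilon)$ follows because $1/\eta=\mathcal O(L)$ when $L$ dominates $L_{\mathrm{QP}}$ and $V/\delta$.

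\textbf{Step 4: feasibility.} I will bound $g_i(x_{t+1/2})$ by the two-case argument of Theorem~\ref{thm:strong}, now with constant step $\eta$ and $\alpha\eta\le 1/4$.

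In the inactive case, $g_i(x_{t+1/2})\le \eta L_gV+\tfrac{\ell_g}{2}\eta^2V^2$.

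In the active case,
\[
g_i(x_{t+1/2})\le(1-\alpha\eta)g_i(x_t)+\tfrac{\ell_g}{2}\eta^2V^2 .
\]
Bounding $g_i(x_t)$ in turn requires the analogous one-step estimate for $y_t=x_{t-1}+\eta w_{t-1}$. Here the polytope is taken at $x_{t-1/2}$, so the gradient mismatch $\nabla g_i(x_{t-1/2})$ versus $\nabla g_i(x_{t-1})$ contributes an extra term of order $L_gV/L$. Iterating the resulting contraction with factor $1-\alpha\eta$ gives a fixed-point level of roughly $(L_gV\eta+\ell_g\eta^2V^2)/(\alpha\eta)$. Using $\alpha=L$ and $\eta\le 1/(4L)$, I expect this to reduce to $g^*=3L_gV/L+\ell_gV^2/(4L^2)$. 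The transient, geometric in $1-\alpha\eta$ and starting from $g_i(x_0)\le 0$ or the early projected steps, sums to $C_1$. Convexity of $g_i$ then gives $g_i(\bar x_T)\le\frac1T\sum_t g_i(x_{t+1/2})\le g^*+C_1/T$.
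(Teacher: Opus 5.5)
Your Step~2 is where the proposal has a genuine gap. The two routes you sketch for the term $v_t^\top(w_t-v_t)$ would not work.

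First, testing the QP at $x_t$ against $\alpha(x^*-x_t)$ yields $(v_t+F(x_t))^\top(v_t-\alpha(x^*-x_t))\le\epsilon_{\mathrm{QP}}/2$. The iterate $w_t$ does not appear in this inequality, so it cannot control $v_t^\top(w_t-v_t)$.

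Second, your fallback is fatal to the rate. A leftover of size $\eta^2V^2$ per step, summed over $T$ steps and divided by $2\eta T$, leaves $\eta V^2/2$ in the averaged gap. That term does not vanish, so \eqref{eq:lip_opt} and the $\mathcal O(L/\epsilon)$ complexity would be lost. The same objection applies to your ``residual of order $\eta^2L_F^2$''; the per-step residual has to be $\mathcal O(\eta^2\epsilon_{\mathrm{QP}})$. In particular, the $4\eta L_F^2/T$ term in the statement does not come from absorbing such a leftover. It comes from QP inexactness: $8\eta^2\epsilon_{\mathrm{QP}}$ per step, with $\epsilon_{\mathrm{QP}}\le L_F^2/T^2$.

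The missing observation is that there is no cross term to fight, because your own decomposition recombines exactly:
\[
-\tfrac12\|w_t\|^2+w_t^\top(w_t-v_t)=\tfrac12\|w_t\|^2-w_t^\top v_t=\tfrac12\|w_t-v_t\|^2-\tfrac12\|v_t\|^2 .
\]
Hence
\[
\eta\,w_t^\top(x-x_{t+1/2})=\tfrac12\|x-x_t\|^2-\tfrac12\|x-y_{t+1}\|^2+\tfrac{\eta^2}{2}\bigl(\|w_t-v_t\|^2-\|v_t\|^2\bigr).
\]
This is exactly the four-point polarization identity the paper applies with $a=y_{t+1}$, $b=x_t$, $c=x$, $d=x_{t+1/2}$.

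Lemma~\ref{lem:qp_lip} with $\eta\le 1/(8L_{\mathrm{QP}})$ then gives $\|w_t-v_t\|^2\le\frac1{32}\|v_t\|^2+8\epsilon_{\mathrm{QP}}$. So the bracket is at most $-\frac{31}{32}\|v_t\|^2+8\epsilon_{\mathrm{QP}}$, and the QP at $x_t$ enters only through this stability lemma. Even without the identity, a crude bound would have sufficed for an $\mathcal O(1/T)$ rate, though not with the stated constant: use $|v_t^\top(w_t-v_t)|\le\frac18\|v_t\|^2+2\sqrt{\epsilon_{\mathrm{QP}}}\|v_t\|$, and absorb it into $-\frac12\|w_t\|^2$ via $\|w_t\|\ge\frac78\|v_t\|-2\sqrt{\epsilon_{\mathrm{QP}}}$.

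In Step~4, your one-step recursion from $x_{t-1}$ to $y_t$, with the gradient mismatch tracked explicitly, is a legitimate alternative to the paper's two-stage recursion $P_t\to H_t\to P_{t+1}$. The paper's version has contraction $(1-c)^2$ per iteration and fixed point $\delta/c$.

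However, you cannot relegate the ball projection to ``early projected steps'' in the transient. Nothing like Lemma~\ref{lem:proj_inactive} is available for Algorithm~\ref{alg:lipschitz}, so the projection may be active at every iteration. The paper charges $L_g\eta V$ per iteration for it, since the projection moves $y_{t+1}$ by at most $\eta V$. That charge is one of the three $L_gV/L$ contributions in $g^*$.
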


\begin{proof}
Since the half-step is unprojected, $x_{t+1/2}=x_t+\eta v_t$ and $y_{t+1}=x_t+\eta w_t$. The full step satisfies $x_{t+1}=\proj_{B(0,R)}(y_{t+1})$, whence $\|x_{t+1}-x\|\le\|y_{t+1}-x\|$ for every $x$ with $\|x\|\le D\le R$.

\paragraph{Optimality.} From the QP guarantee at $x_{t+1/2}$, for any $x\in\mathcal{C}$ we have $\alpha(x-x_{t+1/2})\in\mathcal{V}_\alpha(x_{t+1/2})$ and by the KKT argument as in Theorem~\ref{thm:strong},
\[
F(x_{t+1/2})^\top(x_{t+1/2}-x)\leq w_t^\top(x-x_{t+1/2})+R'\sqrt{\epsilon_{\mathrm{QP}}}
=\frac{1}{\eta}(y_{t+1}-x_t)^\top(x-x_{t+1/2})+R'\sqrt{\epsilon_{\mathrm{QP}}},
\]
where we used $\|x\|\le D$ and $\|x_{t+1/2}\|\le R+\delta/2$, so $\|x-x_{t+1/2}\|\le R'$. Using the polarization identity $2(a-b)^\top(c-d)=\|a-d\|^2-\|a-c\|^2+\|b-c\|^2-\|b-d\|^2$ with $a=y_{t+1}$, $b=x_t$, $c=x$, $d=x_{t+1/2}$,
\begin{align*}
2(y_{t+1}-x_t)^\top(x-x_{t+1/2})&=\|y_{t+1}-x_{t+1/2}\|^2-\|y_{t+1}-x\|^2+\|x_t-x\|^2-\|x_t-x_{t+1/2}\|^2\\[2pt]
&=\eta^2\|w_t-v_t\|^2-\|y_{t+1}-x\|^2+\|x_t-x\|^2-\eta^2\|v_t\|^2.
\end{align*}
By monotonicity, $F(x)^\top(x_{t+1/2}-x)\leq F(x_{t+1/2})^\top(x_{t+1/2}-x)$. Since $x_{t+1}=\proj_{B(0,R)}(y_{t+1})$ and $\|x\|\leq R$, we have $\|x_{t+1}-x\|\leq\|y_{t+1}-x\|$, so $-\|y_{t+1}-x\|^2\leq-\|x_{t+1}-x\|^2$. Hence
\begin{equation}\label{eq:lip_basic}
2\eta F(x)^\top(x_{t+1/2}-x)\leq \|x_t-x\|^2-\|x_{t+1}-x\|^2+\eta^2\bigl(\|w_t-v_t\|^2-\|v_t\|^2\bigr)+2\eta R'\sqrt{\epsilon_{\mathrm{QP}}}.
\end{equation}
Now we bound $\|w_t-v_t\|$. Let $v_t^*, w_t^*$ be the exact minimizers at $x_t$ and $x_{t+1/2}$ respectively. Since $\|x_t-x_{t+1/2}\|=\eta\|v_t\|\le\eta V\le\delta/2<\delta$, Lemma~\ref{lem:qp_lip} applies and
\[
\|w_t^*-v_t^*\|\leq L_{\mathrm{QP}}\|x_t-x_{t+1/2}\|=L_{\mathrm{QP}}\eta\|v_t\|.
\]
Since $\eta\leq 1/(8L_{\mathrm{QP}})$, we have $\|w_t^*-v_t^*\|\leq\frac{1}{8}\|v_t\|$. For the approximate solutions, $\|v_t-v_t^*\|\leq\sqrt{\epsilon_{\mathrm{QP}}}$ and $\|w_t-w_t^*\|\leq\sqrt{\epsilon_{\mathrm{QP}}}$ by strong convexity. Thus
\[
\|w_t-v_t\|\leq\|w_t-w_t^*\|+\|w_t^*-v_t^*\|+\|v_t^*-v_t\|\leq\frac{1}{8}\|v_t\|+2\sqrt{\epsilon_{\mathrm{QP}}}.
\]
Squaring and using $(a+b)^2\leq 2a^2+2b^2$,
\[
\|w_t-v_t\|^2\leq\frac{1}{32}\|v_t\|^2+8\epsilon_{\mathrm{QP}}.
\]
Therefore
\[
\|w_t-v_t\|^2-\|v_t\|^2\leq -\frac{31}{32}\|v_t\|^2+8\epsilon_{\mathrm{QP}}.
\]
Substituting into \eqref{eq:lip_basic} and summing from $t=0$ to $T-1$,
\[
2\eta\sum_{t=0}^{T-1}F(x)^\top(x_{t+1/2}-x)\leq \|x_0-x\|^2-\|x_T-x\|^2-\frac{31\eta^2}{32}\sum_{t=0}^{T-1}\|v_t\|^2+8\eta^2T\epsilon_{\mathrm{QP}}+2\eta R'T\sqrt{\epsilon_{\mathrm{QP}}}.
\]
With $\epsilon_{\mathrm{QP}}\leq L_F^2/T^2$, the error terms are $8\eta^2L_F^2/T+2\eta R'L_F=O(1)$. Dividing by $2\eta T$ and dropping the negative term,
\[
\frac{1}{T}\sum_{t=0}^{T-1}F(x)^\top(x_{t+1/2}-x)\leq \frac{\|x_0-x\|^2}{2\eta T}+\frac{4\eta L_F^2+2R'L_F}{T},
\]
which establishes \eqref{eq:lip_opt} for the averaged iterate $\bar{x}_T$ with the stated constant.

\paragraph{Feasibility.} Define the half-step and full-step feasibility violations
\[
H_t:=\max\Bigl\{0,\max_{i\in[m]}g_i(x_{t+1/2})\Bigr\},\qquad
P_t:=\max\Bigl\{0,\max_{i\in[m]}g_i(x_t)\Bigr\}.
\]
Set $c:=\alpha\eta=L\eta\in(0,\tfrac14]$. For the unprojected half-step, smoothness and the velocity-polytope give
\[
g_i(x_{t+1/2})\le(1-c)\max\{g_i(x_t),0\}+\frac{\ell_g\eta^2V^2}{2}
\]
for active constraints, and $g_i(x_{t+1/2})\le\eta L_gV+\frac{\ell_g\eta^2V^2}{2}$ for inactive constraints. Hence
\[
H_t\le(1-c)P_t+\delta_1,\qquad \delta_1:=\eta L_gV+\frac{\ell_g\eta^2V^2}{2}.
\]
For the full step, the unprojected point $y_{t+1}=x_t+\eta w_t$ satisfies the same bound with respect to $x_{t+1/2}$:
\[
g_i(y_{t+1})\le(1-c)\max\{g_i(x_{t+1/2}),0\}+\frac{\ell_g\eta^2V^2}{2}
\]
(active) or $g_i(y_{t+1})\le\eta L_gV+\frac{\ell_g\eta^2V^2}{2}$ (inactive). The projection onto $B(0,R)$ can move the point by at most $\eta\|w_t\|\le\eta V$, contributing an additional $L_g\eta V$ by Lipschitz continuity of $g_i$. Thus
\[
P_{t+1}\le(1-c)H_t+\delta_2,\qquad \delta_2:=2\eta L_gV+\frac{\ell_g\eta^2V^2}{2}.
\]
Combining the two recursions,
\[
P_{t+1}\le(1-c)^2P_t+(1-c)\delta_1+\delta_2\le(1-c)^2P_t+\delta_1+\delta_2.
\]
Set $\delta:=\delta_1+\delta_2=3\eta L_gV+\ell_g\eta^2V^2$ and define
\[
g^*:=\frac{\delta}{c}=\frac{3L_gV}{L}+\frac{\ell_g\eta V^2}{L}\le\frac{3L_gV}{L}+\frac{\ell_gV^2}{4L^2}.
\]
(The last inequality uses $\eta\le1/(4L)$.) One checks that if $P_t\le g^*$ then $P_{t+1}\le(1-c)^2g^*+\delta\le g^*$, and if $P_t>g^*$ then $P_{t+1}-g^*\le(1-c)^2(P_t-g^*)$. Hence for all $t\ge0$,
\[
P_t\le g^*+(1-c)^{2t}(P_0-g^*)_+,\qquad
H_t\le g^*+(1-c)^{2t}(P_0-g^*)_+.
\]
For the averaged half-step $\bar x_T=\frac1T\sum_{t=0}^{T-1}x_{t+1/2}$,
\[
g_i(\bar x_T)\le\frac1T\sum_{t=0}^{T-1}H_t\le g^*+\frac{(P_0-g^*)_+}{T\bigl(1-(1-c)^2\bigr)}.
\]
Thus $\max_{i\in[m]}g_i(\bar x_T)\le g^*+C_1/T$ with $C_1:=(P_0-g^*)_+/(2c-c^2)$, establishing \eqref{eq:lip_feas}.
\end{proof}

\begin{remark}
The constant $g^* = 3L_gV/L + \ell_gV^2/(4L^2)$ in \eqref{eq:lip_feas} reveals a trade-off in primal methods: the feasibility violation of the averaged iterate does not vanish asymptotically but instead approaches a problem-dependent constant. Theorem~\ref{thm:feas_lb} proves that on the class of smooth convex constraints with positive curvature at the boundary, the very first half-step is infeasible, and consequently no constant-stepsize primal extragradient method can guarantee feasibility at every half-step. The constant $g^*$ is typically small in practice: for matrix game experiments with $L=1$, $V\approx 6$, and $\ell_g=2$, we have $g^*\approx 10^{-1}$; with stronger curvature or smaller velocity bounds, $g^*$ can be reduced to $10^{-3}$ or below. When $g^* \leq \epsilon$, OPCGM--Lipschitz provides a genuine $\epsilon$-solution. For problems requiring arbitrarily small feasibility violation, one can periodically restart the algorithm from a feasible point, reducing the effective $g^*$ at the cost of an $\mathcal{O}(\log(1/\epsilon))$ factor.
\end{remark}

\begin{theorem}[Infeasibility of the first half-step for primal extragradient on curved constraints]\label{thm:feas_lb}
Let $g(x,y)=\frac{1}{2}(x^2+y^2)-\frac{1}{2}\leq 0$ be the unit-disk constraint with $\ell_g=1$. Let $F(x,y)=(0,1)$ for all $(x,y)$, which is monotone with $L_F=1$ and $L$-Lipschitz for every $L\geq 0$. Let Algorithm~\ref{alg:lipschitz} be initialized at $x_0=(1,0)$ with input parameter $L=1$ and constant stepsize $\eta>0$. Then the first half-step satisfies
\[
g(x_{1/2}) \;=\; \frac{\eta^2}{2} \;>\; 0.
\]
Consequently, no primal extragradient method with constant stepsize can guarantee feasibility at every half-step.
\end{theorem}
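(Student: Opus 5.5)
The plan is a direct computation of the first iteration of Algorithm~\ref{alg:lipschitz} on this instance. I take the first QP to be solved exactly ($\epsilon_{\mathrm{QP}}=0$), which the exact equality in the statement requires. With approximate solutions one gets $g(x_{1/2})=\eta^2/2+O(\eta\sqrt{\epsilon_{\mathrm{QP}}})$ instead, and this is still positive once $\epsilon_{\mathrm{QP}}$ is small.

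\textbf{Step 1: the polytope at $x_0$.} At $x_0=(1,0)$ we have $g(x_0)=\tfrac12-\tfrac12=0$. So the constraint belongs to $I_{x_0}$, since the active set uses $g_i(x)\ge 0$. The gradient is $\nabla g(x_0)=x_0=(1,0)$. With $\alpha=L=1$, the linearized constraint $\alpha g(x_0)+\nabla g(x_0)^\top v\le 0$ becomes $v_1\le 0$. Hence
\[
\widetilde{\mathcal{V}}_\alpha(x_0)=\{v\in\mathbb{R}^2: v_1\le 0,\ \|v\|\le V\},
\]
with $V=4L_F+2LR=4+2R>4$.

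\textbf{Step 2: solve the QP.} We minimize $\tfrac12\|v+(0,1)\|^2$ over this set. The unconstrained minimizer is $v=(0,-1)$. It satisfies $v_1=0\le 0$ and $\|v\|=1<V$, so it is feasible and therefore optimal. This is consistent with Lemma~\ref{lem:velocity}, which says the norm safeguard is inactive. Thus $v_0=(0,-1)$.

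\textbf{Step 3: evaluate $g$ at the half-step.} The half-step is $x_{1/2}=x_0+\eta v_0=(1,-\eta)$. Then
\[
g(x_{1/2})=\tfrac12(1+\eta^2)-\tfrac12=\tfrac{\eta^2}{2},
\]
which is strictly positive for every $\eta>0$.

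\textbf{Step 4: the consequence.} The instance satisfies the hypotheses of the class. $F$ is constant, hence monotone and $L$-Lipschitz for every $L$. The constraint is smooth and convex, with $\nabla^2 g=I\succeq \ell_0 I$. The computation above holds for every constant stepsize $\eta>0$. So no choice of constant stepsize makes the first half-step feasible, and no constant-stepsize primal extragradient method can guarantee half-step feasibility.

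The geometric reason is worth recording. The first QP returns a velocity tangent to the boundary, because $F$ is orthogonal to the outward normal there. Any nonzero tangent step off a curved boundary leaves the disk by a second-order amount, namely $\tfrac{\ell_g}{2}\eta^2\|v_0\|^2$.

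The only delicate step is justifying that the QP solution is exactly $(0,-1)$: the unconstrained minimizer must be feasible for both the linear constraint and the norm safeguard. The rest is arithmetic.
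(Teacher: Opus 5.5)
Your proposal is correct and follows the paper's proof: you reduce the linearized constraint at $x_0=(1,0)$ to $v_1\le 0$, note that $-F=(0,-1)$ is feasible so $v_0=(0,-1)$, and compute $g(1,-\eta)=\eta^2/2$. Your extra checks are sound and are points the paper leaves implicit: the norm safeguard is inactive, the exact equality presupposes an exact QP solve, and positivity survives approximate solves only when $\epsilon_{\mathrm{QP}}$ is small relative to $\eta^2$.
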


\begin{proof}
At $x_0=(1,0)$, the active set is $I_{x_0}=\{1\}$ and $\nabla g(x_0)=(1,0)$. The velocity polytope is $\mathcal{V}_1(x_0)=\{v: v_1\le 0\}$. Since $-F=(0,-1)$ satisfies $0\le 0$, the exact QP solution is $v_0=(0,-1)$. The half-step is $x_{1/2}=x_0+\eta v_0=(1,-\eta)$, and therefore
\[
g(x_{1/2})=\frac{1}{2}\bigl(1+\eta^2\bigr)-\frac{1}{2}=\frac{\eta^2}{2}>0.
\]
Thus the first half-step is infeasible.
\end{proof}

\begin{remark}
Theorem~\ref{thm:feas_lb} establishes that the very first half-step of a constant-stepsize primal extragradient method can be infeasible on smooth convex constraints with positive curvature at the boundary. Whether the \emph{averaged} iterate $\bar{x}_T$ can become asymptotically feasible---and whether a universal lower bound on the asymptotic feasibility of the average exists for all primal QP methods---remains an open problem. Numerical simulation suggests that for constant stepsize, the average becomes feasible for $T\approx 1/(4\eta)$, while individual half-steps remain infeasible for all $t$.
\end{remark}

\begin{remark}
The infeasibility in Theorem~\ref{thm:feas_lb} is not an artifact of a particular stepsize schedule: any primal extragradient method with a constant stepsize $\eta>0$ initialized at $x_0=(1,0)$ produces an infeasible first half-step on this instance. Whether vanishing stepsizes ($\eta_t\to 0$) can circumvent this barrier for all half-steps, and whether a universal lower bound exists for the averaged iterate, remain open questions.
\end{remark}

\section{Lower Complexity Bounds for the Primal QP Oracle Class}\label{sec:lower}

We prove that the gap complexities achieved in Theorems~\ref{thm:strong} and \ref{thm:lipschitz} are unimprovable for algorithms restricted to local linear constraint approximations. An algorithm belongs to the \emph{primal QP class} if, at iteration $t$, it may query $F(x_t)$, $g_i(x_t)$, and $\nabla g_i(x_t)$, and then select $x_{t+1}$ based solely on the velocity polytope $\mathcal{V}_\alpha(x_t)$ (or a finite sequence of such polytopes). It does not have access to global constraint information beyond the local linearization.

\subsection{Deterministic lower bounds}

\begin{theorem}[Lower bound for Lipschitz monotone VIs with large Lipschitz constant]\label{thm:lb_nonlip}
For any primal QP algorithm and any $\epsilon\in(0,1/4]$, there exists a monotone variational inequality problem with convex functional constraints satisfying Assumptions~\ref{ass:F} (with $R=1$) and \ref{ass:g} on dimension $d=\lceil 1/(2\epsilon)\rceil$ such that the algorithm requires $\Omega(1/\epsilon^2)$ queries to $F$ to produce a weak $\epsilon$-solution.
\end{theorem}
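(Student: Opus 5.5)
The plan is a resisting-oracle argument that multiplies two sources of hardness. First, a coordinate-by-coordinate \emph{chain} over the $d=\lceil 1/(2\epsilon)\rceil$ coordinates forces the algorithm to discover the coordinates one at a time. Second, a \emph{local search} inside each coordinate costs $\Omega(1/\epsilon)$ queries, and this is where the large Lipschitz constant $\Theta(1/\epsilon)$ is used. Multiplying gives $d\cdot\Omega(1/\epsilon)=\Omega(1/\epsilon^2)$. The key restriction we exploit is that a primal QP algorithm only sees $F(x_t)$, $g_i(x_t)$, $\nabla g_i(x_t)$ and the polytope $\mathcal{V}_\alpha(x_t)$, all of which are determined by the problem in a neighbourhood of $x_t$.

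\textbf{Step 1 (instance family).} Take the constraint to be the unit ball, $g(x)=\tfrac12\|x\|^2-\tfrac12$. This satisfies Assumption~\ref{ass:g} with $D=1$, $L_g=\ell_g=\ell_0=1$. It is also fully known to the algorithm, so all hardness comes from $F$. Let $F=\nabla f$ with $f$ convex, so that $F$ is monotone and a weak $\epsilon$-solution certifies an $O(\epsilon)$ suboptimality of $f$ on $\mathcal{C}$.

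The function is indexed by a hidden permutation $\pi$ of $[d]$ and hidden offsets $s_k\in\{1,\dots,\lfloor 1/(4\epsilon)\rfloor\}$:
\[
f_{\pi,s}(x)=\max_{1\le k\le d}\Bigl(\sigma_k x_{\pi(k)}-k\epsilon\Bigr),
\]
smoothed by a Moreau/softmax envelope with parameter $\Theta(\epsilon)$. The smoothing makes $F$ Lipschitz with constant $\Theta(1/\epsilon)$ and keeps $\sup_{\|x\|\le 1}\|F(x)\|\le 1$, so Assumption~\ref{ass:F} holds with $R=1$. The signs/shifts $\sigma_k$ are modified by the offset $s_k$ so that piece $k$ only becomes distinguishable from a ``default'' piece inside a slab of width $\Theta(\epsilon)$ located at position $s_k\epsilon$ along coordinate $\pi(k-1)$.

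The optimal value of the unsmoothed max over the ball is about $-\Theta(1/\sqrt d)\approx-\Theta(\sqrt\epsilon)$, while any point with some undiscovered coordinate at zero has value $\ge -O(\epsilon)$ relative to that piece. The level spacing $k\epsilon$ is chosen so that failing to have touched coordinate $\pi(k)$ costs an optimality gap exceeding $\epsilon$.

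\textbf{Step 2 (adversary).} Run the standard adversary that fixes $\pi(k)$ and $s_k$ only when forced. We will prove by induction on the number of queries the following invariant: if coordinate $\pi(k)$ has not been revealed, then $F$, $g$, $\nabla g$ and hence $\mathcal{V}_\alpha$ at every queried point agree with those of an instance in which pieces $k,\dots,d$ are absent.

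Within the current active coordinate, the next piece is invisible outside its width-$\Theta(\epsilon)$ slab. Because $F$ there equals the default constant direction, the oracle answer carries no information about $s_k$. The adversary can therefore keep moving $s_k$ to a slab not yet probed. This needle-in-a-segment argument costs $\Omega(1/\epsilon)$ queries per coordinate. A weak $\epsilon$-solution requires all $d$ pieces to have been discovered, so the total is at least $d\cdot\Omega(1/\epsilon)=\Omega(1/\epsilon^2)$.

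\textbf{Step 3 (certificate).} Show that any output $\hat x$ produced before all pieces are discovered violates the weak $\epsilon$-solution condition. Either $\max_i g_i(\hat x)>\epsilon$, or there exists $x\in\mathcal{C}$, namely the minimizer along the undiscovered coordinates, with $F(x)^\top(\hat x-x)\ge f(\hat x)-f(x)>\epsilon$. This inequality follows from convexity of $f$ and the level spacing chosen in Step~1.

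\textbf{Main obstacle.} The hardest step is reconciling convexity with the needle structure. A convex $f$ with a hidden kink normally reveals, through the sign of its gradient, on which side the kink lies, which would permit binary search in $O(\log(1/\epsilon))$ queries instead of $\Omega(1/\epsilon)$.

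I would first try to design each piece so that outside its slab the gradient is \emph{identical} on both sides. This means using pieces of the form $\max\{c,\;\sigma(x_{\pi(k-1)}-s_k\epsilon)\}$ that are active only on a thin region. The region is kept thin because the max with the preceding piece dominates everywhere else, and the slopes $\Theta(1/\epsilon)$ are only reached after smoothing, which is exactly why the Lipschitz constant must scale as $\Theta(1/\epsilon)$.

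If this fails, the fallback is to place the offset information in which of $\Theta(1/\epsilon)$ orthogonal directions inside an $O(1)$-dimensional sub-block carries the next piece. This still yields an $\Omega(1/\epsilon)$ per-block query count while keeping the total dimension at $\lceil 1/(2\epsilon)\rceil$.
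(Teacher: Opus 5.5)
Your Step~2 cannot be carried out, and the obstruction is not technical: the claimed extra factor of $\Omega(1/\epsilon)$ per coordinate is false in the oracle model you set up. Your family has a fixed, fully known ball constraint, and $F=\nabla f$ with $f$ convex and $\|F\|\le 1$ on $\mathcal{C}$. Your adversary places no restriction on how the algorithm combines its past answers. Consider a cutting-plane method (center of gravity, or Vaidya) that queries $F$ at the center of the current localizer and keeps the halfspace $\{z: F(x_t)^\top(z-x_t)\le 0\}$, which contains every weak solution. It uses only local information. With the usual accuracy-certificate averaging of the query points, it returns a weak $\epsilon$-solution after $O(d\log(1/\epsilon))=O(\epsilon^{-1}\log(1/\epsilon))$ queries when $d=\lceil 1/(2\epsilon)\rceil$, whatever the Lipschitz constant of $F$. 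So no instance in dimension $\Theta(1/\epsilon)$ can force $\Omega(1/\epsilon^2)$ queries against such algorithms. Your chain over $d$ coordinates yields only $\Omega(d)=\Omega(1/\epsilon)$, and the multiplicative factor is unattainable.

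The ``main obstacle'' you flagged is exactly where this bites, and neither of your fixes works. Suppose a convex differentiable $f$ has $\nabla f(p)=\nabla f(q)$. Then the gradient inequality at $p$, together with convexity on $[p,q]$, forces $f$ to be affine on $[p,q]$ with $\nabla f\equiv\nabla f(p)$ there. Hence a piece that is invisible outside a thin slab, with identical gradients on both sides, cannot exist; smoothing does not help, since the argument applies to every differentiable convex $f$. When the answers on the two sides differ instead, monotonicity of $s\mapsto \partial_j f(x+se_j)$ permits binary search in $O(\log(1/\epsilon))$ queries.

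The fallback also fails. An $O(1)$-dimensional block holds only $O(1)$ orthogonal directions, and making the blocks $\Theta(1/\epsilon)$-dimensional pushes the total dimension to $\Theta(1/\epsilon^2)$. What is missing is therefore not a better gadget. You need either dimension of order $1/\epsilon^2$, where the standard chain or bilinear shift construction with $L=\Theta(1/\epsilon)$ and $D=1$ gives $\Omega(LD^2/\epsilon)$, or a restriction of the algorithm class that excludes cutting-plane methods.

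For comparison, the paper takes a different route: a skew bilinear operator built from a hidden rotation of the shift matrix with $L=1/\epsilon$, plus a resisting oracle that reveals one new direction per query. That oracle is valid only while $T<n-1$ with $n=\lceil 1/(2\epsilon)\rceil$. At this dimension it likewise certifies only $\Omega(1/\epsilon)$ queries, so it does not supply the step you are missing.
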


\begin{proof}
Set $n=\lceil 1/(2\epsilon)\rceil$ and $d=2n$. The feasible set is the Euclidean ball $\mathcal{C}=\{z\in\mathbb{R}^d:\|z\|\leq 1\}$, encoded by $g(z)=\|z\|^2-1$. For a hidden orthogonal matrix $U\in\mathbb{R}^{n\times n}$, define the operator
\[
F_U(x,y) = \Bigl(\frac{1}{\epsilon}U^\top S U y,\; -\frac{1}{\epsilon}U^\top S^\top U x\Bigr),
\]
where $S$ is the $n\times n$ shift matrix with $S_{i,i+1}=1$ for $i=1,\dots,n-1$ and all other entries zero. This operator is monotone and $L$-Lipschitz with $L=1/\epsilon$. On the unit ball, $\|F_U(z)\|\le 1/\epsilon$, so Assumption~\ref{ass:F} holds with $R=1$ and $L_F=1/\epsilon$.

At iteration $t$, the algorithm queries $z_t=(x_t,y_t)$. Let $X_t=\Span\{x_0,\dots,x_t\}$ and $Y_t=\Span\{y_0,\dots,y_t\}$ with $\dim(X_t)\le t+1$ and $\dim(Y_t)\le t+1$. The oracle maintains a hidden orthogonal matrix $U$ and reveals it only on the subspaces $X_t$ and $Y_t$. Specifically, the oracle returns $F_t(z_t)$ where the matrix $A_t=U_t^\top S U_t$ agrees with $A=U^\top S U$ on $X_t$ and $Y_t$; that is, $A_t x = A x$ for all $x\in X_t$ and $A_t^\top y = A^\top y$ for all $y\in Y_t$. Such a $U_t$ exists because $\dim(X_t)+\dim(Y_t)\le 2(t+1)<2n$ for $t<n-1$.

We claim that $\dim(X_{t+1})\le\dim(X_t)+1$ and $\dim(Y_{t+1})\le\dim(Y_t)+1$. The base case $t=0$ holds. Assume the claim holds up to $t$. The velocity polytope at $z_t$ for the ball constraint is:
\begin{itemize}
\item If $\|z_t\|<1$: $\mathcal{V}_\alpha(z_t)=\mathbb{R}^{2n}$.
\item If $\|z_t\|\geq 1$: $\mathcal{V}_\alpha(z_t)=\{v: z_t^\top v\leq -\alpha(\|z_t\|^2-1)\}$.
\end{itemize}
In the first case, the exact QP solution is $v_t^*=-F_t(z_t)$. By construction, the $x$-component of $F_t(z_t)$ lies in $X_t\cup\Span\{A_t y_t\}$, which has dimension at most $\dim(X_t)+1$ because $A_t y_t$ introduces at most one new coordinate direction beyond $X_t$ (the shift matrix $S$ maps $e_{i+1}$ to $e_i$). Similarly, the $y$-component introduces at most one new direction beyond $Y_t$. Hence $v_t^*\in (X_t\oplus\Span\{e_{k+1}\})\times(Y_t\oplus\Span\{e_{k+1}\})$ where $k=\dim(X_t)$. In the second case, the exact solution is the projection of $-F_t(z_t)$ onto the halfspace $\{v: z_t^\top v\leq c\}$, which is $v_t^*=-F_t(z_t)-\lambda z_t$ for $\lambda=\max\{0,(-z_t^\top F_t(z_t)-c)/\|z_t\|^2\}$. Since $z_t\in X_t\times Y_t$ and $F_t(z_t)$ introduces at most one new direction in each block, the same dimension bound holds. The approximate solution $v_t$ lies in the span of the problem data, hence in a subspace of dimension at most $\dim(X_t)+1$ in the $x$-block and $\dim(Y_t)+1$ in the $y$-block. Therefore $x_{t+1}=x_t+\eta_t v_t^{(x)}\in X_{t+1}$ with $\dim(X_{t+1})\le\dim(X_t)+1$, and similarly for $Y_{t+1}$.

After $T$ iterations, $\dim(X_T)\le T+1$ and $\dim(Y_T)\le T+1$, so the explored subspace has dimension at most $2(T+1)<2n$ provided $T<n-1$. The oracle fixes $U$ such that the unexplored subspace contains directions where the duality gap is large. By the standard Nemirovski argument for the shift matrix (see \cite{Ouyang2021}), any point whose $x$- and $y$-components lie in subspaces of dimension at most $T+1$ has duality gap at least $L/(8(T+1))$. Setting $L/(8(T+1))\leq\epsilon$ with $L=1/\epsilon$ yields $T=\Omega(L/\epsilon)=\Omega(1/\epsilon^2)$. Therefore any primal QP algorithm requires $\Omega(1/\epsilon^2)$ queries to $F$ to produce a weak $\epsilon$-solution.
\end{proof}

\begin{theorem}[Lower bound for Lipschitz monotone VIs on the ball]\label{thm:lb_lip}
For any primal QP algorithm and any $L>0$, $\epsilon>0$, there exists an $L$-Lipschitz monotone variational inequality with a single smooth convex functional constraint on dimension $d=\lceil L/(2\epsilon)\rceil$ such that the algorithm requires $\Omega(L/\epsilon)$ queries to $F$ to produce a weak $\epsilon$-solution.
\end{theorem}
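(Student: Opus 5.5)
The plan is to reuse the resisting-oracle construction from the proof of Theorem~\ref{thm:lb_nonlip}, with the operator scale set to the prescribed $L$ rather than $1/\epsilon$. Set $n=\lceil L/(2\epsilon)\rceil$ and work in $\mathbb{R}^{2n}$; for the single-constraint form I will again take the ball $\mathcal{C}=\{z:\|z\|\le 1\}$ with $g(z)=\|z\|^2-1$, which satisfies Assumption~\ref{ass:g} (curvature $\ell_0=2$). For a hidden orthogonal $U$, define the bilinear operator
\[
F_U(x,y)=\bigl(L\,U^\top S U y,\;-L\,U^\top S^\top U x\bigr).
\]
It is skew-symmetric, hence monotone, and $L$-Lipschitz because $\|S\|\le 1$. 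On the unit ball we have $L_F\le L$.

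The first step is to verify monotonicity and the Lipschitz bound. The second step is to repeat the span-growth induction from Theorem~\ref{thm:lb_nonlip}. For the ball, $\mathcal{V}_\alpha(z_t)$ is either all of $\mathbb{R}^{2n}$ or a single halfspace with normal $z_t$. The QP solution is therefore $-F_t(z_t)-\lambda z_t$, and each query adds at most one new shift direction per block. After $T$ queries, the $x$- and $y$-components lie in subspaces of dimension at most $T+1$, and the oracle can keep $U$ consistent with everything revealed so far as long as $T<n-1$.

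The third step, which I expect to be the main obstacle, is the gap lower bound. I need to show that any $\hat z$ supported on these Krylov-type subspaces has
\[
\sup_{z\in\mathcal{C}}F_U(z)^\top(\hat z-z)\ \ge\ \frac{L}{8(T+1)}.
\]
My first approach is to reduce to the bilinear saddle function $L\,x^\top A y$ over the ball. Because $F_U$ is skew, the weak gap equals $\sup_{z}F_U(\hat z)^\top(\hat z-z)$. This is at least $\|F_U(\hat z)\|$ minus a correction, which I would handle by evaluating at the unit vector $-F_U(\hat z)/\|F_U(\hat z)\|$. I would then lower bound $\|A\hat y\|$ and $\|A^\top\hat x\|$ using the standard Nemirovski/Ouyang--Xu estimate for the shift matrix restricted to the first $T+1$ coordinates, citing \cite{Ouyang2021} as the earlier proof does. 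If the bare shift is too degenerate (its kernel lets $\hat z=0$ be a solution), I would instead add a linear term $b$ aligned with the unexplored coordinate $e_n$, so that the true solution sits outside the explored subspace. That keeps the operator affine, monotone, and $L$-Lipschitz.

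Finally, I would set $L/(8(T+1))\le\epsilon$, which forces $T\ge L/(8\epsilon)-1=\Omega(L/\epsilon)$. The dimension $n\approx L/(2\epsilon)$ exceeds this $T$, so the resisting oracle remains valid throughout. Feasibility of $\hat z$ is not needed for the lower bound; it only strengthens the requirement.
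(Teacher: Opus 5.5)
\textbf{Verdict: there is a genuine gap, and it is in Step 3, the step you single out.} Your Steps 1--2 and the final count reproduce the paper's proof almost verbatim. But the Step 3 inequality is false for this instance. The paper's proof has exactly the same hole: it asserts $L/(8(T+1))$ for the bare shift operator by appeal to Ouyang--Xu, so following it does not help.

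\textbf{The homogeneous operator is trivially solvable.} Write $F_U(z)=Mz$ with $M=\begin{pmatrix}0&LA\\-LA^\top&0\end{pmatrix}$ and $A=U^\top SU$. Since $M$ is skew, $\sup_{\|z\|\le 1}F_U(z)^\top(\hat z-z)=\sup_{\|z\|\le1}z^\top M^\top\hat z=\|M\hat z\|$. So the gap is exactly $L(\|A\hat y\|^2+\|A^\top\hat x\|^2)^{1/2}$. It vanishes at $\hat z=0$, which lies in every explored subspace and is an admissible feasible starting point. It also vanishes on all of $\ker M\cap\mathcal{C}$, where $\ker M=\Span\{(U^\top e_n,0),(0,U^\top e_1)\}$. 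Hence no dimension count can give a positive lower bound on $\|A\hat y\|$. Your first approach does not merely risk degeneracy; it cannot work. Even from an adversarial start in $B(0,1/2)$, OPCGM--Lipschitz never activates the constraint on this instance and is plain extragradient. It shrinks $\|Mz_t\|$, the gap of its current iterate, linearly at an $n$-independent rate, because every nonzero singular value of $M$ equals $L$.

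\textbf{Adding a linear term does not repair this, because the defect lies in $S$ itself.} $S^\top S=I-e_1e_1^\top$ and $SS^\top=I-e_ne_n^\top$ are coordinate projections: $S$ sends $e_j$ to $e_{j-1}$ and $S^\top$ sends it straight back. So $M^2=-L^2P$ with $P$ an orthogonal projection, and the chain never advances. For any affine $F(z)=Mz+c$, a VI solution on the ball satisfies $(M+\lambda I)z^*=-c$ with $\lambda\ge0$. Using $(M+\lambda I)^{-1}=(\lambda I-M)(\lambda^2I+L^2P)^{-1}$ for $\lambda>0$, or $z^*=Mc/L^2$ for $\lambda=0$, some solution lies in $\Span\{c,Mc,M^2c,M^3c\}$. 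The explored subspaces of iterates started at $0$ contain this span after four queries, whatever $n$ is. Concretely, $c=(0,\beta U^\top e_n)$ generates only $\Span\{(0,U^\top e_n),(U^\top e_{n-1},0)\}$, and $c=(\beta U^\top e_n,0)$ lies in $\ker M$. Note also that the linear term enters the very first oracle answer, so it cannot stay ``unexplored''. What must stay outside the explored span is the tail of the solution.

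\textbf{What is missing is a genuinely propagating chain.} You need a coupling matrix with tridiagonal Gram matrix, for example the bidiagonal difference matrix $I-S$ as in Nesterov's and Ouyang--Xu's constructions. The linear term should seed one end of the chain. Then the $k$-th explored space is exactly the span of the first $k$ chain coordinates, while the solution spreads its mass over all $n$. You would then still have to do two things:
\begin{enumerate}
\item Compute directly, for the single joint-ball constraint $\|x\|^2+\|y\|^2\le1$, that every $\hat z$ supported on the first $k\le n/2$ chain coordinates with $\|\hat z\|^2\le1+\epsilon$ has gap $\gtrsim L/k$. This is not covered by the product domains for which such estimates are usually stated.
\item Verify that the $-\lambda z_t$ correction of the halfspace QP preserves that support.
\end{enumerate}
Without this computation the $\Omega(L/\epsilon)$ bound is not established.
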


\begin{proof}
Let $n=\lceil L/(2\epsilon)\rceil$ and $d=2n$. The feasible set is the Euclidean ball $\mathcal{C}=\{z\in\mathbb{R}^{2n}:\|z\|\leq 1\}$, encoded by $g(z)=\|z\|^2-1$. For a hidden orthogonal matrix $U\in\mathbb{R}^{n\times n}$, define the operator
\[
F_U(x,y)=(L\,U^\top S U y,\,-L\,U^\top S^\top U x),
\]
where $S$ is the $n\times n$ shift matrix with $S_{i,i+1}=1$ for $i=1,\dots,n-1$ and all other entries zero. This operator is monotone and $L$-Lipschitz because $\|L\,U^\top S U\|_2=L\|S\|_2=L$. On $B(0,1)$, $\|F_U(z)\|\le L$, so Assumption~\ref{ass:F} holds with $R=1$ and $L_F=L$.

\paragraph{Oracle definition.} At iteration $t$, the algorithm queries $z_t=(x_t,y_t)$. Let $X_t=\Span\{x_0,\dots,x_t\}$ and $Y_t=\Span\{y_0,\dots,y_t\}$. The oracle maintains a hidden orthogonal matrix $U$ and constructs a response matrix $U_t$ as follows. Since $\dim(X_t)\le t+1$ and $\dim(Y_t)\le t+1$, there exists an orthogonal matrix $U_t$ such that $U_t x=U x$ for all $x\in X_t$ and $U_t y=U y$ for all $y\in Y_t$; equivalently, $U_t$ agrees with $U$ on the subspace $X_t\oplus Y_t$. The oracle returns $F_t(z_t)=F_{U_t}(z_t)$. Because $z_t\in X_t\times Y_t$, we have $F_t(z_t)\in (X_t\oplus\Span\{e_{k+1}\})\times(Y_t\oplus\Span\{e_{k+1}\})$ where $k=\dim(X_t)$.

\paragraph{Subspace growth.} We claim that $\dim(X_{t+1})\le\dim(X_t)+1$ and $\dim(Y_{t+1})\le\dim(Y_t)+1$. The base case $t=0$ holds. Assume the claim holds up to $t$. The velocity polytope at $z_t$ for the ball constraint is:
\begin{itemize}
\item If $\|z_t\|<1$: $\mathcal{V}_\alpha(z_t)=\mathbb{R}^{2n}$.
\item If $\|z_t\|\geq 1$: $\mathcal{V}_\alpha(z_t)=\{v: z_t^\top v\leq -\alpha(\|z_t\|^2-1)\}$.
\end{itemize}
In the first case, the exact QP solution is $v_t^*=-F_t(z_t)$. By construction, the $x$-component of $F_t(z_t)$ lies in $X_t\cup\Span\{A_t y_t\}$, which has dimension at most $\dim(X_t)+1$ because $A_t y_t$ introduces at most one new coordinate direction beyond $X_t$. Similarly for the $y$-component. In the second case, the exact solution is the projection of $-F_t(z_t)$ onto the halfspace $\{v: z_t^\top v\leq c\}$, which is $v_t^*=-F_t(z_t)-\lambda z_t$ for $\lambda=\max\{0,(-z_t^\top F_t(z_t)-c)/\|z_t\|^2\}$. Since $z_t\in X_t\times Y_t$ and $F_t(z_t)$ introduces at most one new direction in each block, the same dimension bound holds. The approximate solution $v_t$ lies in the span of the problem data, hence in a subspace of dimension at most $\dim(X_t)+1$ in the $x$-block and $\dim(Y_t)+1$ in the $y$-block. Therefore $x_{t+1}=x_t+\eta_t v_t^{(x)}\in X_{t+1}$ with $\dim(X_{t+1})\le\dim(X_t)+1$, and similarly for $Y_{t+1}$.

After $T$ iterations, $\dim(X_T)\le T+1$ and $\dim(Y_T)\le T+1$, so the explored subspace has dimension at most $2(T+1)<2n$ provided $T<n-1$. The oracle fixes $U$ such that the unexplored subspace contains directions where the duality gap is large. By the standard Nemirovski argument for the shift matrix (see \cite{Ouyang2021}), any point whose $x$- and $y$-components lie in subspaces of dimension at most $T+1$ has duality gap at least $L/(8(T+1))$. Setting $L/(8(T+1))\leq\epsilon$ yields $T=\Omega(L/\epsilon)$. Therefore any primal QP algorithm requires $\Omega(L/\epsilon)$ queries to $F$ to produce a weak $\epsilon$-solution.
\end{proof}

\subsection{Randomized lower bounds}

\begin{remark}\label{rem:rand_open}
Extending the lower bounds to randomized primal QP algorithms via Yao's minimax principle remains an open problem. A natural candidate is the box constraint $\mathcal C=\{z:\|z\|_\infty\le 1\}$ with a hidden cyclic shift matrix; however, constructing a distribution over instances for which every deterministic algorithm with $o(L/\epsilon)$ queries fails to identify the hidden coordinate with constant probability requires a careful analysis of subspace growth under randomized query points. We leave this as an important direction for future work.
\end{remark}

\section{Parameter-Free Primal Methods}\label{sec:universal}

A significant practical limitation of Theorems~\ref{thm:strong} and \ref{thm:lipschitz} is the requirement of knowing $\mu$, $L$, $L_F$, $D$, and the constraint parameters a priori. We address this through a single-loop primal method that achieves an $\mathcal{O}(1/\sqrt{T})$ rate for the optimality gap with only a single scalar input: an upper bound $R$ on the radius of the feasible set. This eliminates the need for smoothness, Lipschitz, or monotonicity constants. The price is a problem-dependent $\mathcal{O}(1)$ asymptotic feasibility violation; we prove that this trade-off is unavoidable for non-adaptive single-step algorithms whose stepsizes are bounded away from zero.

\begin{algorithm}[H]
\caption{Parameter-Free OPCGM (Single-Loop)}
\label{alg:free}
\begin{algorithmic}[1]
\Require $x_0\in\mathcal{C}$, radius bound $R\geq D$, horizon $T\geq 1$
\For{$t=0,1,\dots,T-1$}
\State $\eta_t = 1/\sqrt{t+1}$, $\alpha_t = 1/\sqrt{t+1}$
\State Query $F(x_t)$ and build $I_{x_t}$
\State Set $V_t = 4\|F(x_t)\| + 2\alpha_t R + 1$
\State Construct $\widetilde{\mathcal{V}}_{\alpha_t}(x_t)=\{v\mid \alpha_t g_i(x_t)+\nabla g_i(x_t)^\top v\leq 0,\;\forall i\in I_{x_t};\ \|v\|\leq V_t\}$
\State Solve $v_t\approx\argmin_{v\in\widetilde{\mathcal{V}}_{\alpha_t}(x_t)}\frac{1}{2}\|v+F(x_t)\|^2$ with $\epsilon_{\mathrm{QP}}\leq \|F(x_t)\|^2/(t+1)$
\State $y_{t+1}=x_t+\eta_t v_t$ and $x_{t+1}=\min\{1,R/\|y_{t+1}\|\}\,y_{t+1}$
\EndFor
\State \Return $\bar{x}_T=\frac{1}{T}\sum_{t=0}^{T-1}x_t$
\end{algorithmic}
\end{algorithm}

\begin{theorem}\label{thm:free}
Let Assumptions~\ref{ass:F} and \ref{ass:g} hold, and let $F$ be monotone. Let $R \geq D$ be a known upper bound. Algorithm~\ref{alg:free} achieves, for all $T\geq 2$ and all $x\in\mathcal{C}$,
\[
F(x)^\top(\bar{x}_T-x)\leq \frac{C_1}{\sqrt{T}},\qquad 
\max_{i\in[m]}g_i(\bar{x}_T)\leq C_2+\frac{C_3}{\sqrt{T}},
\]
where $C_1,C_2,C_3$ depend on $R$, $L_F$, $L_g$, and $\ell_g$, but not on $T$.
\end{theorem}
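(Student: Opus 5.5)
The argument has two separate pieces: a gap bound obtained by the same one-step inequality as in the optimality part of Theorem~\ref{thm:strong}, and a feasibility bound obtained from a per-step recursion as in the feasibility part of Theorem~\ref{thm:lipschitz}.

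\textbf{Velocity bound.} Since every iterate satisfies $\|x_t\|\le R$ because of the ball projection, we have $\|F(x_t)\|\le L_F$. Hence $V_t\le 4L_F+2R+1=:\bar V$. The argument of Lemma~\ref{lem:velocity} applies verbatim with $\alpha=\alpha_t$ and the enlarged radius $V_t$, so $\|v_t\|\le \bar V$. It also shows that the norm constraint is inactive at the exact minimizer.

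\textbf{Optimality gap.} Fix $x\in\mathcal{C}$. I will use the KKT argument from the proof of Theorem~\ref{thm:strong} together with $\|v_t-v_t^*\|\le\sqrt{\epsilon_{\mathrm{QP}}}\le L_F/\sqrt{t+1}$. This gives
\[
F(x_t)^\top(x_t-x)\le v_t^\top(x-x_t)+\frac{2RL_F}{\sqrt{t+1}}.
\]
Next, the polarization identity and nonexpansiveness of the projection toward $x$ (using $\|x\|\le D\le R$) give
\[
F(x)^\top(x_t-x)\le \frac{\|x_t-x\|^2-\|x_{t+1}-x\|^2}{2\eta_t}+\frac{\eta_t}{2}\bar V^2+\frac{2RL_F}{\sqrt{t+1}},
\]
where monotonicity was used on the left-hand side. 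Summing over $t$ is the standard variable-stepsize telescoping. Since $1/\eta_t$ is nondecreasing and $\|x_t-x\|\le 2R$, the distance terms contribute at most $4R^2/(2\eta_{T-1})=2R^2\sqrt T$. Also $\sum_t\eta_t\le 2\sqrt T$ and $\sum_t(t+1)^{-1/2}\le 2\sqrt T$. Dividing by $T$ and using linearity of $F(x)^\top(\cdot-x)$ for the average, I get $C_1=2R^2+\bar V^2+4RL_F$.

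\textbf{Feasibility.} Set $P_t=\max\{0,\max_i g_i(x_t)\}$. For an active constraint, the descent lemma and the polytope constraint give
\[
g_i(y_{t+1})\le(1-\alpha_t\eta_t)g_i(x_t)+\tfrac{\ell_g}{2}\eta_t^2\bar V^2.
\]
For an inactive constraint, the same lemma gives $g_i(y_{t+1})\le \eta_tL_g\bar V+\tfrac{\ell_g}{2}\eta_t^2\bar V^2$. The projection moves the point by at most $\eta_t\bar V$, adding $L_g\eta_t\bar V$. As a crude uniform control, note that $P_t\le L_g(R+D)$ always holds, since $\|x_t\|\le R$ and some feasible $x^*$ exists.

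This leads to the decomposition $C_2+C_3/\sqrt T$. The inactive-constraint increment has size $2L_g\bar V\eta_t+\ell_g\bar V^2\eta_t^2/2$, and this is not summable to something vanishing. So the plan is to bound $P_{t+1}\le \max\{(1-\alpha_t\eta_t)P_t,0\}+\delta_t$ with $\delta_t\le 2L_g\bar V\eta_t+\ell_g\bar V^2\eta_t^2$. Then $P_t$ stays bounded by a constant $C_2:=\max\{P_0,\;L_g(R+D),\;2L_g\bar V+\ell_g\bar V^2\}$, either directly from the Lipschitz bound or by the same invariance argument as in Theorem~\ref{thm:lipschitz}. The main obstacle is here. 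With $\alpha_t\eta_t=1/(t+1)$, the contraction is too weak to absorb increments of order $\eta_t=1/\sqrt{t+1}$, so a vanishing bound is not available. This is why I settle for an $O(1)$ constant, and the theorem only claims that much.

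Convexity of $g_i$ then gives $g_i(\bar x_T)\le\frac1T\sum_t P_t\le C_2$. I will keep an explicit $C_3/\sqrt T$ term to absorb the initial transient, for example the $\ell_g\bar V^2\sum_t\eta_t^2/T$ contribution if the sharper recursion is used. In every case the constants depend only on $R,L_F,L_g,\ell_g$ (and $D\le R$).
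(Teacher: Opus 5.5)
Your proof is correct. The feasibility half is, in substance, the same as the paper's, but the gap half rests on a different one-step inequality.

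\textbf{Gap bound.} The paper substitutes $w=\alpha_t(x-x_t)$ directly into the approximate QP inequality. After an unspecified ``rearranging'' step, it carries the error terms $\alpha_t\|v_t\|\|x_t-x\|$ and $\epsilon_{\mathrm{QP}}/2$. You instead transplant the multiplier argument from the proof of Theorem~\ref{thm:strong}. That argument uses only $\lambda_i\ge 0$ and $\nabla g_i(x_t)^\top(x_t-x)\ge g_i(x_t)\ge 0$ for $i\in I_{x_t}$, so it yields $F(x_t)^\top(x_t-x)\le v_t^{*\top}(x-x_t)$ with no dependence on $\alpha_t$. Inexactness then costs only $\|v_t-v_t^*\|\,\|x-x_t\|\le 2RL_F/\sqrt{t+1}$.

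Your route gives an explicit $C_1=2R^2+\bar V^2+4RL_F$, and its one-step inequality is fully justified. The paper's is less transparent. Isolating $(F(x_t)+v_t)^\top(x_t-x)$ from the QP inequality alone requires dividing by $\alpha_t$. That leaves the term $-v_t^\top(v_t+F(x_t))/\alpha_t$, whose sign is controlled only through the KKT multipliers and complementary slackness, i.e.\ essentially through your argument.

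\textbf{Feasibility.} Both proofs ultimately rest on three facts: $\|x_t\|\le R$, the Lipschitz bound $g_i(x_t)\le L_g(R+D)$, and Jensen's inequality. The paper's recursion for $h_t$ adds nothing beyond this; after multiplying by $t+1$, its increments are $C'\sqrt{t+1}$, not $C'$. This matches your own observation that the contraction factor $1-\frac{1}{t+1}$ cannot absorb increments of order $1/\sqrt{t+1}$.

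For the same reason, your fallback ``by the same invariance argument as in Theorem~\ref{thm:lipschitz}'' does not work here. Keeping $P_t\le C$ invariant would need $\delta_t\le C\alpha_t\eta_t=C/(t+1)$, and your $\delta_t$ is of order $1/\sqrt{t+1}$, so this fails once $t$ is large. Drop that alternative and rely on the Lipschitz bound alone. Then $C_3=0$ suffices, and $P_0=0$ because $x_0\in\mathcal C$.
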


\begin{proof}
The proof consists of two parts: uniform boundedness and rate analysis.

\paragraph{Uniform boundedness and velocity bound.} By construction, the projection in Step~8 ensures $\|x_t\|\leq R$ for all $t$. Hence $\|F(x_t)\|\leq L_F$ and $V_t \leq V_{\max} := 4L_F + 2R + 1$ for all $t$. For any $x^*\in\mathcal{C}$, the point $w = \alpha_t(x^* - x_t)$ satisfies the velocity polytope constraints by convexity of $g_i$:
\[
\alpha_t g_i(x_t) + \nabla g_i(x_t)^\top(\alpha_t(x^*-x_t)) \leq \alpha_t g_i(x^*) \leq 0.
\]
Moreover, $\|w\| = \alpha_t\|x^* - x_t\| \leq \alpha_t(D + R) \leq 2\alpha_t R$. Since $V_t = 4\|F(x_t)\| + 2\alpha_t R + 1 \geq 2\alpha_t R + 1 > 2\alpha_t R$, we have $w \in \widetilde{\mathcal{V}}_{\alpha_t}(x_t)$. Thus the augmented polytope is nonempty and contains a feasible point with norm at most $2\alpha_t R$.

By optimality of the exact QP solution $v_t^*$,
\[
\|v_t^* + F(x_t)\| \leq \|w + F(x_t)\| \leq 2\alpha_t R + \|F(x_t)\|.
\]
Therefore $\|v_t^*\| \leq 2\alpha_t R + 2\|F(x_t)\| \leq V_t - 1 < V_t$, so the norm safeguard is inactive at the exact solution. For the approximate solution $v_t$, the same argument as in Lemma~\ref{lem:velocity} yields
\[
\|v_t\| \leq 2\alpha_t R + 3\|F(x_t)\| \leq V_t.
\]
Hence the velocity bound $\|v_t\| \leq V_{\max}$ holds uniformly.

\paragraph{Optimality rate.} From the QP guarantee with $w = \alpha_t(x - x_t)\in\widetilde{\mathcal{V}}_{\alpha_t}(x_t)$ for any $x\in\mathcal{C}$,
\[
(F(x_t)+v_t)^\top(v_t-\alpha_t(x-x_t))\leq \frac{\epsilon_{\mathrm{QP}}}{2}.
\]
Rearranging and using the update $x_{t+1}=x_t+\eta_t v_t$ (with projection nonexpansiveness toward $x$),
\[
F(x_t)^\top(x_t-x) \leq \frac{1}{2\eta_t}\bigl(\|x-x_t\|^2-\|x-x_{t+1}\|^2\bigr) + \frac{\eta_t}{2}\|v_t\|^2 + \alpha_t\|v_t\|\|x_t-x\| + \frac{\epsilon_{\mathrm{QP}}}{2}.
\]
By monotonicity, $F(x)^\top(x_t-x)\leq F(x_t)^\top(x_t-x)$. Summing from $t=0$ to $T-1$ and dividing by $T$,
\[
F(x)^\top(\bar{x}_T-x) \leq \frac{1}{T}\sum_{t=0}^{T-1}\frac{1}{2\eta_t}\bigl(\|x-x_t\|^2-\|x-x_{t+1}\|^2\bigr) + \frac{1}{T}\sum_{t=0}^{T-1}\left(\frac{\eta_t}{2}V_{\max}^2 + 2\alpha_t R V_{\max} + \frac{\epsilon_{\mathrm{QP}}}{2}\right).
\]
For the first sum, since $\eta_t=1/\sqrt{t+1}$,
\begin{eqnarray*}
\sum_{t=0}^{T-1}\frac{1}{2\eta_t}\bigl(\|x-x_t\|^2-\|x-x_{t+1}\|^2\bigr) &\leq& \frac{\|x-x_0\|^2}{2\eta_0} + \sum_{t=1}^{T-1}\|x-x_t\|^2\left(\frac{1}{2\eta_t}-\frac{1}{2\eta_{t-1}}\right) \\
&\leq& 2R^2 + 2R^2\sum_{t=1}^{T-1}\frac{1}{2\sqrt{t}} = O(\sqrt{T}).
\end{eqnarray*}
The second sum is bounded by
\[
\frac{V_{\max}^2}{2}\sum_{t=0}^{T-1}\frac{1}{\sqrt{t+1}} + 2RV_{\max}\sum_{t=0}^{T-1}\frac{1}{\sqrt{t+1}} + \frac{L_F^2}{2}\sum_{t=0}^{T-1}\frac{1}{t+1} = O(\sqrt{T}) + O(\sqrt{T}) + O(\log T).
\]
Dividing by $T$ yields
\[
F(x)^\top(\bar{x}_T-x) \leq \frac{C_1}{\sqrt{T}},
\]
for an explicit constant $C_1$ depending on $R$, $V_{\max}$, and $L_F$.

\paragraph{Feasibility.} Since $\|x_t\|\le R$ for all $t$ and every $x^*\in\mathcal{C}$ satisfies $\|x^*\|\le D$, Lipschitz continuity of $g_i$ yields the uniform bound $g_i(x_t)\le L_g(R+D)=:C_2$ for all $t$ and all $i$. We refine this to the asymptotic constant plus a vanishing term.

Let $h_t:=\max\{0,\max_{i\in[m]}g_i(x_t)\}$. For an active constraint $i\in I_{x_t}$, the velocity-polytope property $\nabla g_i(x_t)^\top v_t\le -\alpha_t g_i(x_t)$ and the smoothness descent lemma give
\[
g_i(y_{t+1})\le g_i(x_t)+\eta_t\nabla g_i(x_t)^\top v_t+\frac{\ell_g\eta_t^2}{2}\|v_t\|^2
\le\Bigl(1-\frac{1}{t+1}\Bigr)g_i(x_t)+\frac{\ell_g V_{\max}^2}{2(t+1)}.
\]
For an inactive constraint, $g_i(x_t)<0$ and the same descent lemma yields
\[
g_i(y_{t+1})\le 0+\eta_t L_g V_{\max}+\frac{\ell_g\eta_t^2}{2}V_{\max}^2
\le\frac{L_gV_{\max}}{\sqrt{t+1}}+\frac{\ell_gV_{\max}^2}{2(t+1)}.
\]
Whenever the projection onto $B(0,R)$ is active we have $\|x_{t+1}-y_{t+1}\|\le\eta_t\|v_t\|\le V_{\max}/\sqrt{t+1}$, which contributes an additional $L_g V_{\max}/\sqrt{t+1}$ by Lipschitz continuity. Combining both cases and using $h_t\le C_2$,
\[
h_{t+1}\le\Bigl(1-\frac{1}{t+1}\Bigr)h_t+\frac{C'}{\sqrt{t+1}},
\qquad C':=2L_g V_{\max}+\frac{\ell_g V_{\max}^2}{2}.
\]
Multiplying by $(t+1)$ and setting $e_t:=t h_t$ gives, for $t\ge 1$,
\[
e_{t+1}\le e_t+C'.
\]
Telescoping from $t=1$ to $T-1$ yields $e_T\le e_1+C'(T-1)$, whence $h_T\le h_1/T+C'$. This bound is $O(1)$, but for the \emph{averaged} iterate we use the uniform bound $h_t\le C_2$ directly:
\[
g_i(\bar{x}_T)\le\frac{1}{T}\sum_{t=0}^{T-1}h_t\le C_2+\frac{1}{T}\sum_{t=0}^{T-1}\min\Bigl\{C_2,\,\frac{e_1}{t+1}+C'\Bigr\}.
\]
Let $T_0$ be the largest integer such that $e_1/(t+1)+C'\le C_2$; note that $T_0=O(1)$ depends only on the problem constants. Then
\[
\frac{1}{T}\sum_{t=0}^{T-1}\min\Bigl\{C_2,\,\frac{e_1}{t+1}+C'\Bigr\}
\le \frac{T_0 C_2}{T} + \frac{1}{T}\sum_{t=T_0+1}^{T-1}\Bigl(\frac{e_1}{t+1}+C'\Bigr)
\le \frac{T_0 C_2}{T} + \frac{e_1\log T}{T} + \frac{C'}{T},
\]
which is $O(1/\sqrt{T})$. In particular, there exists a constant $C_3$ such that $g_i(\bar{x}_T)\le C_2+C_3/\sqrt{T}$ for all $T\ge 2$.
\end{proof}

\begin{remark}
For the unit-disk instance with $F=(0,1)$, numerical simulation indicates that non-adaptive single-step primal QP methods with stepsizes bounded away from zero suffer $\Theta(1)$ asymptotic feasibility violation. A rigorous lower bound for this class remains open.
\end{remark}

\begin{remark}
Algorithm~\ref{alg:free} requires only a single scalar $R \geq D$, which is typically much easier to estimate than the smoothness constant $L$, the strong monotonicity parameter $\mu$, or the operator bound $L_F$. In many applications, $D$ is determined by physical constraints (e.g., box constraints $x \in [0, 1]^d$ imply $D = \sqrt{d}$). The $\mathcal{O}(1)$ asymptotic feasibility is the price of complete parameter freedom: the preceding remark suggests that for non-adaptive single-step algorithms with stepsizes bounded away from zero and without knowledge of $L$ or $\mu$, there may exist problem instances and finite horizons at which the velocity polytope cannot enforce strong enough contraction to drive the constraint violation below a constant. Achieving vanishing feasibility without any problem parameters remains an open problem for primal QP methods.
\end{remark}

\section{Last-Iterate Convergence}\label{sec:last}

Averaged-iterate convergence is standard in the variational inequality literature, but practitioners prefer the last iterate. For strongly monotone problems, we prove that the last iterate of Algorithm~\ref{alg:strong} itself converges at the optimal $\mathcal{O}(1/T)$ rate, without any additional averaging or Halpern machinery \cite{Lieder2021}.

\begin{theorem}[Last-iterate convergence: strongly monotone case]\label{thm:last_strong}
Let Assumptions~\ref{ass:F} and \ref{ass:g} hold, and let $F$ be $\mu$-strongly-monotone. Let $x_T$ be the last iterate of Algorithm~\ref{alg:strong}. Then
\[
\|x_T-x^*\|^2\leq \frac{C_1}{T},\qquad \max_{i\in[m]}g_i(x_T)\leq \frac{C_2}{T},
\]
where $x^*$ is the unique solution and $C_1, C_2$ depend on $L_F$, $\mu$, $D$, $L_g$, and $\ell_g$.
\end{theorem}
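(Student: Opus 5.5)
We reuse the recursion for $r_t:=\|x_t-x^*\|^2$ from Lemma~\ref{lem:proj_inactive} for the distance bound, and the inductive argument from the feasibility part of Theorem~\ref{thm:strong} for the constraint bound.

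\textbf{Distance to the solution.} We take $x^*$ to be the unique solution. Since $F$ is strongly monotone, $x^*$ exists and is unique, and it lies in $\mathcal{C}$, so $\|x^*\|\le D<R$. The proof of Lemma~\ref{lem:proj_inactive} already gives
\[
r_{t+1}\le\Bigl(1-\tfrac{2}{t+1}\Bigr)r_t+\frac{V^2}{\mu^2(t+1)^2}+\frac{4RL_F}{\mu T(t+1)}.
\]
This uses two ingredients. The first is the QP inequality with $w=\alpha(x^*-x_t)$ combined with strong monotonicity. The second is nonexpansiveness of $\proj_{B(0,R)}$ toward $x^*$. Multiplying by $(t+1)^2$ and setting $e_t=t^2r_t$, the lemma's estimate gives $e_{t+1}\le e_t+V^2/\mu^2+8RL_F/\mu$. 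Here we must keep the $-r_t$ term, since it is harmless. The $\epsilon_{\mathrm{QP}}$ contribution is $(t+1)4RL_F/(\mu T)\le 8RL_F/\mu$ over the horizon $t\le T-1$. We then get $e_T\le C'T$, hence $\|x_T-x^*\|^2\le C'/T$. So the first claim holds with $C_1=C'$. Because $\|x_0-x^*\|\le 2D$, this $C'$ depends only on $D,R,L_F,\mu$.

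\textbf{Constraint violation of the last iterate.} By Lemma~\ref{lem:proj_inactive}, $x_t=y_t$ for $t\ge T_1$. The induction in the feasibility part of Theorem~\ref{thm:strong} then gives $g_i(y_{t+1})\le C/(t+1)$ for all $t\ge T_1$ and all $i$. Two observations need checking. First, the active case uses $g_i(x_t)=g_i(y_t)\le C/t$, which is valid once $t>T_1$. Second, the inactive case needs no hypothesis at all. For $T\ge T_1+1$, taking $t=T-1$ gives $g_i(x_T)=g_i(y_T)\le C/T$. For $T\le T_1$, the crude bound $g_i(x_T)\le L_g(R+D)$ becomes $g_i(x_T)\le L_g(R+D)T_1/T$. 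Hence the second claim holds with $C_2=C+L_g(R+D)T_1$.

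\textbf{Main obstacle.} The delicate point is making $C_2$ depend only on the listed problem constants. The constant $C$ contains $(T_1+1)\max_i g_i(y_{T_1+1})$. We bound this by Lipschitz continuity: $\|y_{T_1+1}-x^*\|\le R+D+V/\mu$, so $g_i(y_{T_1+1})\le L_g(R+D+V/\mu)$. With $R$ fixed as a multiple of $D$, say $R=3D$, the quantities $V$, $T_1$ and $C$ are then explicit in $L_F,\mu,D,L_g,\ell_g$.

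A second subtlety is that the $\epsilon_{\mathrm{QP}}$ term was bounded using $(t+1)/T\le2$. This is fine because only $t\le T-1$ is needed. It does make the constants horizon-uniform only because we choose $\epsilon_{\mathrm{QP}}\le L_F^2/T^2$, exactly as in Lemma~\ref{lem:proj_inactive}. We therefore state that choice explicitly.
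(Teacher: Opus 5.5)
Your argument follows the paper's proof. You telescope $e_t=t^2r_t$ in the distance recursion from the proof of Lemma~\ref{lem:proj_inactive}, then carry the feasibility induction of Theorem~\ref{thm:strong} over to $x_T$ using $x_t=y_t$ for $t>T_1$. Two things you do are real improvements: you get $e_T\le C'T$ for every $T$ directly from nonexpansiveness, instead of restarting at $T_1$, and you treat $T\le T_1$ and the constants explicitly.

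\textbf{The gap.} It lies in the ingredient you import, and the paper's proof relies on it too: the chain $\mu\|x_t-x^*\|^2\le F(x_t)^\top(x_t-x^*)\le -v_t^\top(x_t-x^*)+2R\sqrt{\epsilon_{\mathrm{QP}}}$. Strong monotonicity only gives $F(x_t)^\top(x_t-x^*)\ge\mu\|x_t-x^*\|^2+F(x^*)^\top(x_t-x^*)$. The last term is nonnegative only when $x_t\in\mathcal{C}$, which primal iterates need not satisfy, and the QP multiplier term does not compensate in general.

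For a concrete failure, take $\mathcal{C}=\{x\in\mathbb{R}^2: x_1\le0,\ \|x\|\le1\}$. The ball constraint supplies the curvature in Assumption~\ref{ass:g} and is inactive near the solution. Let $F(x)=(\mu x_1+\beta x_2-b,\ \mu x_2-\beta x_1)$ with $b>0$; then $F$ is $\mu$-strongly monotone and $x^*=0$. At $x_t=(s,\delta)$ with $s>0$ small and $b\ge\beta\delta-\mu s$, the exact QP solution with $\alpha=2\mu$ is $v_t=(-2\mu s,\ \beta s-\mu\delta)$. Hence
\[
-v_t^\top(x_t-x^*)=\mu\|x_t-x^*\|^2-s(\beta\delta-\mu s),
\]
which is strictly below $\mu\|x_t-x^*\|^2$ as soon as $\beta\delta>\mu s$. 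For $\mu=b=1$, $\beta=10$, $s=\delta=10^{-2}$ it is even negative, so $\|y_{t+1}-x^*\|>\|x_t-x^*\|$. A violation $s=\mathcal{O}(1/t)$ together with $|\delta|$ of order $1/\sqrt{t}$ is exactly what the claimed rates allow, so nothing keeps the iterates out of this regime. Consequently the recursion $r_{t+1}\le(1-\frac{2}{t+1})r_t+\cdots$ is not established. Neither, therefore, is $e_T\le C'T$, nor the inactivity index $T_1$ on which your feasibility part rests.

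\textbf{What a repair needs.} The missing piece is control of $-F(x^*)^\top(x_t-x^*)$ at infeasible points. Suppose the VI admits KKT multipliers $\lambda^*$ at $x^*$; this needs a constraint qualification not contained in Assumptions~\ref{ass:F}--\ref{ass:g}. Then convexity gives $-F(x^*)^\top(x_t-x^*)\le\sum_j\lambda_j^*\max\{g_j(x_t),0\}$, so the recursion holds with the extra term $2\eta_t\|\lambda^*\|_1\max_j\max\{g_j(x_t),0\}$. If $\max_j g_j(x_t)\le C/t$ were already known, this term would be $\mathcal{O}(1/t^2)$ and $e_{t+1}\le e_t+\mathcal{O}(1)$ would survive. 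However, your $C/t$ induction presupposes that the ball projection is inactive after $T_1$, and that was itself deduced from the distance bound. The two estimates would therefore have to be proved jointly, for example by a simultaneous induction on $r_t\le C'/t$ and $\max_j g_j(x_t)\le C/t$, rather than in the order used by you and by the paper.
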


\begin{proof}
From Lemma~\ref{lem:proj_inactive}, the projection is inactive for all $t\geq T_1$, where $T_1$ is a problem-dependent constant. For $t\geq T_1$, we have $x_{t+1}=y_{t+1}=x_t+\eta_t v_t$.

From the proof of Theorem~\ref{thm:strong}, the distance recursion for the unprojected iterates gives
\[
\|y_{t+1}-x^*\|^2\leq \Bigl(1-\frac{2}{t+1}\Bigr)\|x_t-x^*\|^2 + \frac{V^2}{\mu^2(t+1)^2} + \frac{4RL_F}{\mu T(t+1)}.
\]
Since the projection is inactive for $t\geq T_1$, $x_{t+1}=y_{t+1}$ and
\[
\|x_{t+1}-x^*\|^2\leq \Bigl(1-\frac{2}{t+1}\Bigr)\|x_t-x^*\|^2 + \frac{V^2}{\mu^2(t+1)^2} + \frac{4RL_F}{\mu T(t+1)}.
\]
Let $r_t=\|x_t-x^*\|^2$ and $e_t=t^2 r_t$. Then for $t\geq T_1$,
\[
e_{t+1} \leq (t+1)^2\Bigl(1-\frac{2}{t+1}\Bigr)r_t + \frac{V^2}{\mu^2} + \frac{4RL_F(t+1)}{\mu T}
= (t^2-1)r_t + \frac{V^2}{\mu^2} + \frac{4RL_F(t+1)}{\mu T}
\leq e_t + C,
\]
where $C=V^2/\mu^2 + 8RL_F/\mu$ is independent of $T$ (using $(t+1)/T\le 2$ for $t\le T-1$). By induction, $e_t\leq e_{T_1}+C(t-T_1)$ for all $t\geq T_1$. Therefore
\[
r_t \leq \frac{e_{T_1}+Ct}{t^2} \leq \frac{C_1}{t}
\]
for some constant $C_1$ depending on $T_1$, $V$, $\mu$, and $L_F$. In particular, $\|x_T-x^*\|^2\leq C_1/T$.

For feasibility, the proof of Theorem~\ref{thm:strong} establishes by induction that $g_i(y_{t+1})\leq C/(t+1)$ for all $t\geq T_1$ and all $i\in[m]$. Since $x_{t+1}=y_{t+1}$ for $t\geq T_1$, we have $g_i(x_T)\leq C/T$ for all $i\in[m]$. For $t<T_1$, the feasibility violation is bounded by a constant, so $g_i(x_T)\leq C_2/T$ for an appropriately chosen $C_2$.
\end{proof}

\begin{remark}
Theorem~\ref{thm:last_strong} shows that for strongly monotone problems, the last iterate of the primal gradient method converges at the optimal $\mathcal{O}(1/T)$ rate for both distance and feasibility. This is a significant advantage over primal-dual methods, where last-iterate convergence typically requires additional assumptions or averaging. The result follows directly from the distance recursion and the feasibility induction established in Theorem~\ref{thm:strong}, without requiring Halpern iteration or other acceleration techniques.
\end{remark}

For merely monotone problems, the situation is fundamentally different for single-step primal methods. The last iterate of the single-step primal method (Algorithm~\ref{alg:strong} with $\mu=0$) does not generally converge for monotone variational inequalities on bounded domains; it can exhibit cyclic or chaotic behavior. The following theorem establishes this rigorously for the canonical rotation instance.

\begin{assumption}[Inactive safeguard for the rotation instance]\label{ass:inactive}
In Theorem~\ref{thm:last_impossible}, we analyze the \emph{unprojected} primal QP dynamics $x_{t+1}=x_t+\eta_t v_t$ with $v_t$ the exact minimizer over $\mathcal{V}_0(x_t)$ (i.e., Algorithm~\ref{alg:strong} with $\mu=0$, so that $\alpha=0$). The Euclidean-ball projection safeguard is assumed inactive; this holds, for example, when $R$ is larger than the maximal radius reached by the sequence, which is finite whenever $\sum_{t=0}^\infty \eta_t^2<\infty$.
\end{assumption}

\begin{theorem}[Universal failure of last-iterate convergence for single-step primal methods on monotone problems]\label{thm:last_impossible}
Consider the variational inequality on the unit disk $\mathcal{C}=\{z:\|z\|_2\leq 1\}$ with $F(x,y)=(-y,x)$. Let $\{x_t\}$ be the sequence generated by the unprojected primal QP update $x_{t+1}=x_t+\eta_t v_t$ with $v_t$ the exact minimizer of $\min_{w\in\mathcal{V}_0(x_t)}\frac12\|w+F(x_t)\|^2$ (i.e., Algorithm~\ref{alg:strong} with $\mu=0$, so that $\alpha=0$), under Assumption~\ref{ass:inactive}. For any initialization $x_0\neq 0$ and any stepsize sequence $\{\eta_t\}_{t\geq 0}\subset[0,1/2]$, exactly one of the following holds:
\begin{enumerate}[label=(\roman*)]
\item The sequence $\{x_t\}$ does not converge;
\item The sequence $\{x_t\}$ converges to a point $x_\infty$ with $\|x_\infty\|\geq \|x_0\|>0$.
\end{enumerate}
In either case, $\liminf_{t\to\infty}\mathrm{Gap}(x_t)\geq \|x_0\|>0$. Consequently, no single-step primal QP method with $\alpha=0$ achieves vanishing last-iterate gap for this instance, regardless of stepsize schedule.
\end{theorem}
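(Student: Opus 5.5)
The plan is to show that on this instance the primal QP step with $\alpha=0$ reduces to the explicit linear map $x_{t+1}=(I-\eta_t J)x_t$, where $J$ is the rotation generator with $F(z)=Jz$. This map can only increase the norm, and on this instance the gap of a point equals its norm, so the theorem follows from these two facts.

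\textbf{Step 1: the QP solution in closed form.} Since $J^\top=-J$, we have $z^\top F(z)=z^\top Jz=0$ for every $z$. For $g(z)=\tfrac12(\|z\|^2-1)$ (or $\|z\|^2-1$; only the direction of $\nabla g$ matters), $\nabla g(z)$ is parallel to $z$, and with $\alpha=0$ the velocity polytope is
\begin{itemize}
\item $\mathcal{V}_0(z)=\{v: z^\top v\le 0\}$ when $\|z\|\ge 1$;
\item $\mathcal{V}_0(z)=\mathbb{R}^2$ when $\|z\|<1$.
\end{itemize}
In both cases $w=-F(z)$ is feasible, because $z^\top(-F(z))=0$. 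It is therefore the exact minimizer of $\tfrac12\|w+F(z)\|^2$, with value $0$. Hence $v_t=-Jx_t$ for every $t$. By Assumption~\ref{ass:inactive} the ball safeguard plays no role, so $x_{t+1}=x_t-\eta_t Jx_t$.

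\textbf{Step 2: monotone growth of the norm.} Expanding the square and using $x_t^\top Jx_t=0$ and $\|Jx_t\|=\|x_t\|$ gives
\[
\|x_{t+1}\|^2=(1+\eta_t^2)\|x_t\|^2 .
\]
So $\|x_t\|$ is nondecreasing and $\|x_t\|\ge\|x_0\|>0$ for all $t$. Zero stepsizes $\eta_t=0$ are harmless: they leave the iterate fixed and still satisfy the inequality. The restriction $\eta_t\le 1/2$ is not needed.

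\textbf{Step 3: the dichotomy.} Alternatives (i) and (ii) are exclusive by definition, so it suffices to show that (ii) holds whenever (i) fails. If $x_t\to x_\infty$, continuity of the norm together with Step~2 gives $\|x_\infty\|=\lim_t\|x_t\|\ge\|x_0\|$. It is worth noting that convergence genuinely occurs, for instance when $\sum_t\eta_t<\infty$, since then $\sum_t\|x_{t+1}-x_t\|<\infty$. This is why (ii) cannot be dropped.

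\textbf{Step 4: computing the gap.} I will use the gap function matching the weak-solution definition, $\mathrm{Gap}(z)=\sup_{x\in\mathcal{C}}F(x)^\top(z-x)$. Since $F(x)^\top x=0$, this becomes
\[
\mathrm{Gap}(z)=\sup_{\|x\|\le1}(Jx)^\top z=\sup_{\|x\|\le1}x^\top(J^\top z)=\|J^\top z\|=\|z\|.
\]
This identity holds for every $z$, including points with $\|z\|>1$ that lie outside $\mathcal{C}$. Combining it with Step~2 gives $\mathrm{Gap}(x_t)=\|x_t\|\ge\|x_0\|$ for all $t$, hence $\liminf_{t\to\infty}\mathrm{Gap}(x_t)\ge\|x_0\|>0$. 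The unique solution is $x^*=0$, and it is never approached, which yields the ``consequently'' clause.

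\textbf{Main obstacle.} The only delicate point is Step~1. One must check that the exact QP minimizer is $-F(x_t)$ in both the interior case and the boundary case, which follows from the tangency identity $x_t^\top F(x_t)=0$. One must also invoke Assumption~\ref{ass:inactive}, so that the norm safeguard of Lemma~\ref{lem:velocity} and the ball projection do not alter the step. Everything else is an exact computation.
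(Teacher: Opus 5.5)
Your proof is correct, and its core is the same as the paper's. Both arguments show $v_t=-F(x_t)$ because $x_t^\top F(x_t)=0$ makes the unconstrained minimizer feasible in $\mathcal{V}_0(x_t)$. Both then derive $\|x_{t+1}\|=\sqrt{1+\eta_t^2}\,\|x_t\|$ and compute $\mathrm{Gap}(z)=\|z\|$.

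You differ in how you establish the dichotomy. The paper splits into three stepsize regimes: $\inf_t\eta_t>0$; $\eta_t\to0$ with $\sum_t\eta_t=\infty$; and $\eta_t\to0$ with $\sum_t\eta_t<\infty$. It argues divergence or convergence separately in each. You instead note that (i) and (ii) are mutually exclusive by definition, and that (ii) follows from monotonicity of $\|x_t\|$ and continuity of the norm whenever (i) fails.

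The paper's route gives extra information, namely which alternative actually occurs for which schedule. However, its three regimes are not exhaustive. For example, $\eta_t$ alternating between $0$ and $1/2$ has $\inf_t\eta_t=0$ and $\eta_t\not\to0$, so the paper's argument as written does not cover every sequence in $[0,1/2]$.

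Your argument is shorter and covers every schedule uniformly. It also makes explicit that the bound $\liminf_t\mathrm{Gap}(x_t)\ge\|x_0\|$ needs nothing beyond norm monotonicity. One small point: your side remark that convergence occurs when $\sum_t\eta_t<\infty$ silently uses $\sup_t\|x_t\|<\infty$. That holds because $\|x_t\|\le\|x_0\|\exp\bigl(\tfrac12\sum_s\eta_s^2\bigr)$ and $\sum_s\eta_s^2\le\tfrac12\sum_s\eta_s$.
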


\begin{proof}
Because $\alpha=0$, the velocity polytope for active constraints is $\mathcal{V}_0(x)=\{v:\nabla g(x)^\top v\le 0\}=\{v:x^\top v\le 0\}$. The unconstrained minimizer of $\frac12\|v+F(x)\|^2$ is $-F(x)=(y,-x)$. For the rotation field we have $x^\top(-F(x))=xy-yx=0$, so $-F(x)$ is always feasible. Hence the exact QP solution is $v_t=-F(x_t)$ for every $t$, regardless of whether $x_t$ lies inside, on, or outside the unit disk. The update is therefore
\[
x_{t+1}=x_t+\eta_t(-F(x_t))=x_t+\eta_t(y_t,-x_t),
\]
which is a clockwise rotation by angle $\arctan\eta_t$ together with a scaling by $\sqrt{1+\eta_t^2}\ge 1$. In particular, the norm evolves as $r_{t+1}=r_t\sqrt{1+\eta_t^2}\ge r_t$, so $\{r_t\}$ is non-decreasing and $r_t\ge r_0=\|x_0\|$ for all $t$.

We analyze the three regimes.

\paragraph{Regime 1: $\inf_t\eta_t\geq\underline{\eta}>0$.} 
Since $\sqrt{1+\eta_t^2}\ge\sqrt{1+\underline{\eta}^2}>1$, the norm grows geometrically: $r_t\ge r_0(\sqrt{1+\underline{\eta}^2})^t\to\infty$. The angular step at every iteration is at least $\arctan\underline{\eta}>0$, so the direction of $x_t$ changes by a uniformly positive angle at every step. The sequence $\{x_t\}$ is unbounded and its direction does not stabilize; therefore it does not converge.

\paragraph{Regime 2: $\eta_t\to 0$ and $\sum_{t=0}^\infty\eta_t=\infty$.}
The total rotation is $\sum_{t=0}^\infty\arctan\eta_t=\infty$ (since $\arctan\eta_t\sim\eta_t$ and $\sum\eta_t=\infty$). Thus the direction of $x_t$ keeps changing indefinitely. If $\sum_{t=0}^\infty\eta_t^2=\infty$, then $r_t\to\infty$ and the sequence is unbounded. If $\sum_{t=0}^\infty\eta_t^2<\infty$, then $r_t\to R_\infty<\infty$ with $R_\infty\ge r_0>0$. The sequence remains bounded but rotates infinitely often; hence it cannot converge to any fixed point.

\paragraph{Regime 3: $\eta_t\to 0$ and $\sum_{t=0}^\infty\eta_t<\infty$.}
Then $\sum_{t=0}^\infty\eta_t^2<\infty$ as well. The total rotation is $\Theta=\sum_{t=0}^\infty\arctan\eta_t<\infty$ and the total scaling factor is $\prod_{t=0}^\infty\sqrt{1+\eta_t^2}=\exp\bigl(\frac{1}{2}\sum_{t=0}^\infty\eta_t^2+O(\sum\eta_t^4)\bigr)<\infty$. Let $R_\infty:=r_0\prod_{t=0}^\infty\sqrt{1+\eta_t^2}<\infty$. Since $r_t$ is monotone and $r_t\ge r_0$, we have $R_\infty\ge r_0>0$. Because the total rotation is finite, the direction converges to a limit angle $\theta_\infty$, and consequently $x_t$ converges to a point $x_\infty$ with $\|x_\infty\|=R_\infty\ge\|x_0\|$.

\paragraph{Gap lower bound.} In all regimes, $\liminf_{t\to\infty}\|x_t\|\geq\|x_0\|$. For this instance, the weak gap at any $z$ is $\mathrm{Gap}(z)=\max_{y\in\mathcal{C}}F(y)^\top(z-y)=\max_{\|y\|\leq 1}(-y_2,y_1)^\top(z-y)=\max_{\|y\|\leq 1}(y_1z_2-y_2z_1)=\|z\|$, achieved at $y=(z_2,-z_1)/\|z\|$. Hence $\liminf_{t\to\infty}\mathrm{Gap}(x_t)\geq\|x_0\|>0$.
\end{proof}

\begin{remark}
Theorem~\ref{thm:last_impossible} is a definitive negative result for single-step primal methods: for the canonical rotation instance, the impossibility of last-iterate convergence is not an artifact of constant stepsizes or specific algorithmic choices, but a fundamental limitation of the single-step primal QP oracle class on merely monotone problems. The theorem also reveals that the $\mathcal{O}(1/T)$ gap for averaged iterates (Theorem~\ref{thm:lipschitz}) is the best possible rate for this class, and even that rate requires the operator to possess structure beyond pure rotation. This closes the last-iterate question for single-step primal QP methods on bounded domains.
\end{remark}

\begin{remark}
For extragradient (two-step) primal methods, the situation is different: on the rotation instance, the extragradient update has contractive eigenvalues and converges geometrically to the origin. Thus the impossibility of Theorem~\ref{thm:last_impossible} does \emph{not} extend to two-step methods. Whether a universal last-iterate impossibility holds for the broader class of multi-step primal QP methods remains an open problem.
\end{remark}

\section{Stochastic Extensions}\label{sec:stochastic}

We extend the framework to stochastic oracles $\tilde{F}(x,\xi)$ where $\mathbb{E}[\tilde{F}(x,\xi)]=F(x)$ and $\mathbb{E}[\|\tilde{F}(x,\xi)-F(x)\|^2]\leq\sigma^2$. We use a fixed projection radius to ensure the velocity bound remains valid.

\begin{algorithm}[H]
\caption{Stochastic OPCGM with Mini-Batching}
\label{alg:stochastic}
\begin{algorithmic}[1]
\Require $x_0\in\mathcal{C}$, $\eta_t>0$, $\alpha>0$, batch size $b\geq 1$, radius $R>0$
\For{$t=0,1,\dots,T-1$}
\State Sample $\xi_{t,1},\dots,\xi_{t,b}$ and compute $\bar{F}_t=\frac{1}{b}\sum_{j=1}^{b}\tilde{F}(x_t,\xi_{t,j})$
\State Construct $\widetilde{\mathcal{V}}_\alpha(x_t)$ with safeguard $\|v\|\leq 4\|\bar{F}_t\|+2\alpha R$
\State $v_t\approx\argmin_{v\in\widetilde{\mathcal{V}}_\alpha(x_t)}\frac{1}{2}\|v+\bar{F}_t\|^2$
\State $y_{t+1}=x_t+\eta_t v_t$
\State $x_{t+1}=\min\{1,R/\|y_{t+1}\|\}\,y_{t+1}$
\EndFor
\State \Return $\bar{x}_T=\frac{1}{T}\sum_{t=0}^{T-1}x_t$
\end{algorithmic}
\end{algorithm}

\begin{theorem}\label{thm:stochastic}
Let Assumptions~\ref{ass:F} and \ref{ass:g} hold, and assume $\|\tilde{F}(x,\xi)\|\leq M$ almost surely for some $M>0$. Set $R=2D+4M/\alpha$. In the monotone setting, Algorithm~\ref{alg:stochastic} with $\eta_t=D/(5\bar V\sqrt{t+1})$, $\alpha=M/D$, and constant batch size $b=\lceil 48\sigma^2/\bar V^2\rceil=\mathcal{O}(1)$ achieves
\[
\mathbb{E}[F(x)^\top(\bar{x}_T-x)]\leq \mathcal{O}\Big(\frac{1}{\sqrt{T}}\Big),\qquad \mathbb{E}\big[\max_i g_i(\bar{x}_T)\big]\leq \mathcal{O}\Big(\frac{1}{\sqrt{T}}\Big),
\]
using $\mathcal{O}(1/\epsilon^2)$ total stochastic queries. In the strongly monotone setting, with $\eta_t=1/(\mu(t+1))$ and $b=\mathcal{O}(1)$, the sample complexity is $\mathcal{O}(1/\epsilon)$.
\end{theorem}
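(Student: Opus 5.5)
The plan is to re-run the deterministic analyses of Theorem~\ref{thm:free} (monotone case) and Theorem~\ref{thm:strong} (strongly monotone case) with $F(x_t)$ replaced by the mini-batch estimate $\bar F_t$. I then control the extra noise terms in expectation. Fix $x\in\mathcal{C}$ and let $\mathcal{F}_t$ be the $\sigma$-field generated by $x_0,\dots,x_t$, so that $x_t$ is $\mathcal{F}_t$-measurable and $\mathbb{E}[\bar F_t\mid\mathcal{F}_t]=F(x_t)$. Since $\|\tilde F\|\le M$ almost surely, we have $\|\bar F_t\|\le M$. The safeguard radius is then at most $\bar V:=4M+2\alpha R$, a deterministic constant. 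Repeating the proof of Lemma~\ref{lem:velocity} with $\bar F_t$ in place of $F(x_t)$ gives $\|v_t\|\le\bar V$ almost surely. This holds because $\alpha(x-x_t)$ is still feasible for the polytope, with norm at most $2\alpha R$.

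\textbf{Gap.} I apply the QP optimality inequality with $w=\alpha(x-x_t)$, exactly as in Theorem~\ref{thm:free}, but with $\bar F_t$. This gives
\[
\bar F_t^\top(x_t-x)\le \tfrac{1}{2\eta_t}\bigl(\|x-x_t\|^2-\|x-x_{t+1}\|^2\bigr)+\tfrac{\eta_t}{2}\bar V^2+\alpha\bar V\|x_t-x\|+\tfrac{\epsilon_{\mathrm{QP}}}{2}.
\]
I write $F(x_t)=\bar F_t+(F(x_t)-\bar F_t)$. The cross term $(F(x_t)-\bar F_t)^\top(x_t-x)$ has zero conditional mean because $x$ is fixed and $x_t$ is $\mathcal{F}_t$-measurable. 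I then use monotonicity, sum with $\eta_t=D/(5\bar V\sqrt{t+1})$, and telescope the distance terms as in Theorem~\ref{thm:free} using $\|x_t\|\le R$. Taking expectations yields $\mathbb{E}[F(x)^\top(\bar x_T-x)]=\mathcal O(1/\sqrt T)$, with $\sigma^2/b$ entering only through constants; the batch choice $b=\lceil 48\sigma^2/\bar V^2\rceil$ keeps these constants $\mathcal O(1)$. The term $\alpha\bar V\|x_t-x\|$ is not summable at rate $1/\sqrt T$ with a constant $\alpha$. So I would first try to absorb it through the exact QP structure used in Theorem~\ref{thm:strong}: the KKT multipliers give $F(x_t)^\top(x_t-x)\le v_t^{*\top}(x-x_t)$ directly, without the $\alpha$ term. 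With that replacement, the only residual is $\|v_t-v_t^*\|\le\sqrt{\epsilon_{\mathrm{QP}}}$, and I take $\epsilon_{\mathrm{QP}}=\mathcal O(1/(t+1))$.

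\textbf{Feasibility (main obstacle).} I want to avoid the $\mathcal O(1)$ floor of Theorem~\ref{thm:free}. The key is that here $\alpha$ is a constant, so the contraction factor is $1-\alpha\eta_t=1-c_0/\sqrt{t+1}$ with $c_0=\alpha D/(5\bar V)$, while the curvature error is only $\ell_g\eta_t^2\bar V^2/2=b_0/(t+1)$. Let $h_t=\max\{0,\max_i g_i(x_t)\}$. Active constraints give
\[
g_i(y_{t+1})\le(1-c_0/\sqrt{t+1})h_t+b_0/(t+1).
\]
Inactive constraints give $g_i(y_{t+1})\le L_g\eta_t\bar V+b_0/(t+1)=\mathcal O(1/\sqrt{t+1})$. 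I then prove by induction that $h_t\le K/\sqrt t$ for all $t\ge 1$, with $K\ge\max\{2b_0/c_0,\;2L_gD/5+b_0\}$. The inactive case satisfies this immediately. In the active case, the needed inequality reduces to $K(1/\sqrt t-1/\sqrt{t+1})\le Kc_0/\sqrt{t(t+1)}-b_0/(t+1)$. The left side is $\mathcal O(Kt^{-3/2})$, so the inequality holds for $t$ beyond a constant, and earlier $t$ are absorbed into $K$. The delicate point is the ball projection. Here I would mimic Lemma~\ref{lem:proj_inactive}. The choice $R=2D+4M/\alpha$ together with the almost-sure bound $\|v_t\|\le\bar V$ should show $\|y_{t+1}\|\le R$ whenever $\|x_t\|$ stays near $\mathcal{C}$, so the projection is inactive after a constant number of steps. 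If that fails, the fallback is to bound the projection displacement by $\eta_t\bar V=\mathcal O(1/\sqrt t)$, which is of the same order and preserves the induction. Finally, by convexity $g_i(\bar x_T)\le\frac1T\sum_t h_t=\mathcal O(1/\sqrt T)$. This bound holds almost surely, hence also in expectation.

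\textbf{Sample complexity and the strongly monotone case.} The monotone sample count is $Tb=\mathcal O(1/\epsilon^2)$. For the strongly monotone case, I repeat the weighted telescoping of Theorem~\ref{thm:strong} with $\eta_t=1/(\mu(t+1))$. The zero-mean cross terms vanish in expectation and the feasibility induction is unchanged, since it only uses $\|v_t\|\le\bar V$. This gives $\mathcal O(1/T)$ for both the gap and the violation, and therefore $\mathcal O(1/\epsilon)$ samples.
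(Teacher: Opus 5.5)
Your gap argument is sound. Using the KKT multipliers, as in Theorem~\ref{thm:strong}, to justify $\bar F_t^\top(x_t-x)\le v_t^{*\top}(x-x_t)$ is the right way to avoid the non-summable $\alpha\bar V\|x_t-x\|$ term when $\alpha$ is constant. The almost-sure bound $\|v_t\|\le\bar V$ then makes the batch size irrelevant, whereas the paper uses $b$ to control $\mathbb{E}\|v_t\|^2$. Your feasibility induction $h_t\le K/\sqrt t$ is also correct as long as $x_{t+1}=y_{t+1}$.

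\textbf{The gap is the ball projection, and your fallback does not handle it.} The induction bounds $g_i(y_{t+1})$, but $h_{t+1}$ is evaluated at $x_{t+1}=\proj_{B(0,R)}(y_{t+1})$. In the inactive branch the extra cost $L_g\eta_t\bar V=L_gD/(5\sqrt{t+1})$ is harmless, because that bound is not recursive. In the active branch, however, the slack of the inductive step is only
\[
\frac{K}{\sqrt{t+1}}-\Bigl(1-\frac{c_0}{\sqrt{t+1}}\Bigr)\frac{K}{\sqrt t}-\frac{b_0}{t+1}=\Theta\Bigl(\frac{Kc_0}{t}\Bigr).
\]
This is a full factor $\sqrt t$ smaller than the $\Theta(1/\sqrt t)$ displacement cost, so the step fails once $\sqrt t\gtrsim 5Kc_0/(L_gD)$. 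If the projection could fire at every iteration, the recursion $h_{t+1}\le(1-c_0/\sqrt{t+1})h_t+L_gD/(5\sqrt{t+1})$ only yields the level $L_gD/(5c_0)=\mathcal{O}(1)$. That is exactly the floor of Theorem~\ref{thm:free} you were trying to avoid.

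\textbf{Your primary route is not available either.} Lemma~\ref{lem:proj_inactive} gets eventual inactivity from the pathwise bound $\|x_t-x^*\|^2\le C'/t$, which comes from strong monotonicity. For merely monotone $F$, nothing in your argument keeps $x_t$ near $\mathcal C$. Bootstrapping from the induction hypothesis is circular. Small $h_t$ does force $\|x_t\|<R-\eta_t\bar V$, because $\min_{\|z\|\ge\rho}\max_i g_i(z)>0$ for $\rho>D$. But this only applies after a time of order $K^2/\gamma^2$, where $\gamma=\min_{\|z\|\ge R-D/5}\max_i g_i(z)<L_g(R+D)$. The earlier iterates can only be absorbed into $K$ through $h_t\le L_g(R+D)$, which would require $\gamma\ge L_g(R+D)$. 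What is missing is a confinement argument showing the projection is active only finitely often. That needs some link between the linearized constraints $\nabla g_i(x_t)^\top v\le-\alpha g_i(x_t)$ and the distance to $\mathcal C$. The paper's proof does not supply this either; it simply asserts that the deterministic recursion for $g_i(y_{t+1})$ transfers.

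The same issue undermines your claim that in the strongly monotone case ``the feasibility induction is unchanged.'' The induction of Theorem~\ref{thm:strong} starts at the time $T_1$ of Lemma~\ref{lem:proj_inactive}. That lemma's distance recursion now contains the extra term $2\eta_t(F(x_t)-\bar F_t)^\top(x_t-x^*)$, which is of order $1/t$ pathwise. So $T_1$ becomes random, and so does the constant $C$, which contains $(T_1+1)\max_i g_i(y_{T_1+1})$. Passing to expectation therefore requires a tail bound on $T_1$.
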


\begin{proof}
Let $\delta_t=\bar{F}_t-F(x_t)$ denote the batch stochastic error, so that $\mathbb{E}[\delta_t]=0$ and $\mathbb{E}[\|\delta_t\|^2]\leq\sigma^2/b$.

\paragraph{Boundedness.} The fixed projection radius $R=2D+4M/\alpha$ ensures $\|x_t\|\leq R$ for all $t$ almost surely. Since $x^*\in\mathcal{C}$ satisfies $\|x^*\|\leq D\leq R$, the projection is nonexpansive and does not increase the distance to $x^*$.

\paragraph{Monotone case.} From the QP guarantee with $w=\alpha(x^*-x_t)\in\mathcal{V}_\alpha(x_t)$,
\[
\bar{F}_t^\top(x_t-x^*) \leq v_t^\top(x^*-x_t) + \frac{\epsilon_{\mathrm{QP}}}{2} = \frac{1}{\eta_t}(x_{t+1}-x_t)^\top(x^*-x_t) + \frac{\epsilon_{\mathrm{QP}}}{2}.
\]
Adding and subtracting $F(x_t)$,
\[
F(x_t)^\top(x_t-x^*) \leq \frac{1}{\eta_t}(x_{t+1}-x_t)^\top(x^*-x_t) + \delta_t^\top(x_t-x^*) + \frac{\epsilon_{\mathrm{QP}}}{2}.
\]
Using monotonicity $F(x^*)^\top(x_t-x^*)\geq 0$ and the polarization identity,
\[
F(x^*)^\top(x_t-x^*) \leq \frac{1}{2\eta_t}\bigl(\|x_t-x^*\|^2-\|x_{t+1}-x^*\|^2\bigr) + \frac{\eta_t}{2}\|v_t\|^2 + \delta_t^\top(x_t-x^*) + \frac{\epsilon_{\mathrm{QP}}}{2}.
\]
Taking expectation and using $\mathbb{E}[\delta_t^\top(x_t-x^*)]=0$ (since $x_t$ is measurable with respect to the past),
\[
\mathbb{E}[F(x^*)^\top(x_t-x^*)] \leq \frac{1}{2\eta_t}\mathbb{E}\bigl[\|x_t-x^*\|^2-\|x_{t+1}-x^*\|^2\bigr] + \frac{\eta_t}{2}\mathbb{E}[\|v_t\|^2] + \frac{\epsilon_{\mathrm{QP}}}{2}.
\]
For the velocity bound, Lemma~\ref{lem:velocity} applied to $\bar{F}_t$ gives
\[
\|v_t\| \leq 4\|\bar{F}_t\| + 2\alpha\|x_t\| \leq 4\|F(x_t)\| + 4\|\delta_t\| + 2\alpha R.
\]
Taking squares and expectations,
\[
\mathbb{E}[\|v_t\|^2] \leq 3\bigl(16L_F^2 + 16\sigma^2/b + 4\alpha^2 R^2\bigr) =: \bar V^2 + \frac{48\sigma^2}{b}.
\]
With constant batch size $b=\lceil 48\sigma^2/\bar V^2\rceil=\mathcal{O}(1)$, we have $\mathbb{E}[\|v_t\|^2]\leq 2\bar V^2$ for all $t$. Summing from $t=0$ to $T-1$ with $\eta_t=\eta/\sqrt{t+1}$,
\[
\sum_{t=0}^{T-1}\mathbb{E}[F(x^*)^\top(x_t-x^*)] \leq \frac{\|x_0-x^*\|^2}{2\eta} + \eta \bar V^2\sqrt{T} + \frac{T\epsilon_{\mathrm{QP}}}{2}.
\]
Dividing by $T$ and using Jensen's inequality for the averaged iterate $\bar{x}_T$,
\[
\mathbb{E}[F(x^*)^\top(\bar{x}_T-x^*)] \leq \frac{\|x_0-x^*\|^2}{2\eta T} + \frac{\eta \bar V^2}{\sqrt{T}} + \frac{\epsilon_{\mathrm{QP}}}{2}.
\]
Choosing $\eta=D/(5\bar V)$ gives the $\mathcal{O}(1/\sqrt{T})$ optimality rate. The total number of stochastic queries is $bT=\mathcal{O}(T)=\mathcal{O}(1/\epsilon^2)$.

\paragraph{Strongly monotone case.} With $\eta_t=1/(\mu(t+1))$ and constant batch size $b=\mathcal{O}(1)$,
\[
\mathbb{E}[\|x_{t+1}-x^*\|^2] \leq \Big(1-\frac{\mu\eta_t}{2}\Big)\mathbb{E}[\|x_t-x^*\|^2] + \eta_t^2\bigl(\bar V^2+\sigma^2/b\bigr).
\]
The standard stochastic approximation recursion yields $\mathbb{E}[\|x_T-x^*\|^2]\leq \mathcal{O}(1/T)$, and the total sample complexity is $bT=\mathcal{O}(T)=\mathcal{O}(1/\epsilon)$.

\paragraph{Feasibility.} The feasibility analysis uses only the velocity polytope constraints, which are satisfied by construction of the QP regardless of whether $\bar{F}_t$ or $F(x_t)$ is used in the objective. Specifically, for any active constraint $i\in I_{x_t}$, the velocity polytope enforces $\nabla g_i(x_t)^\top v_t \leq -\alpha g_i(x_t)$. By smoothness,
\[
g_i(y_{t+1}) \leq g_i(x_t) + \eta_t \nabla g_i(x_t)^\top v_t + \frac{\ell_g\eta_t^2}{2}\|v_t\|^2
\leq (1-\alpha\eta_t)g_i(x_t) + \frac{\ell_g\eta_t^2}{2}\|v_t\|^2.
\]
Since $\|v_t\|\leq \bar V$ almost surely by Lemma~\ref{lem:velocity} (with $L_F$ replaced by $M$), the deterministic feasibility recursion holds almost surely. The same $\mathcal{O}(1/\sqrt{T})$ (monotone) and $\mathcal{O}(1/T)$ (strongly monotone) feasibility bounds therefore hold in expectation.
\end{proof}

\begin{remark}
The projection $x_{t+1}\leftarrow\min\{1,R/\|x_{t+1}\|\}\,x_{t+1}$ in Algorithm~\ref{alg:stochastic} is onto a Euclidean ball, not onto the feasible set $\mathcal{C}$. This is fundamentally different from projection-based methods and does not require constraint evaluation. The ball radius $R$ is a crude overestimate of $\mathcal{C}$'s diameter, and the projection serves only to ensure the almost-sure boundedness needed for Lemma~\ref{lem:velocity}. In practice, $R$ can be set to a large constant with negligible effect on convergence. This technique is standard in stochastic approximation \cite{Nemirovski2009, Juditsky2011} and does not compromise the primal nature of the algorithm. The single-call stochastic extragradient methods of Hsieh et al.\ \cite{Hsieh2019} provide related variance-reduction techniques for unconstrained problems.
\end{remark}

\section{Numerical Experiments}\label{sec:numerics}

We evaluate OPCGM on six problem classes, including two real-world applications from portfolio optimization and the CUTEst test collection, organized around two questions. First, how does OPCGM behave as its own key hyperparameters change, holding everything else fixed? Figures~\ref{fig:lipschitz} and~\ref{fig:scaling} (right panel) sweep, respectively, the boundary-aggressiveness parameter $\alpha$ and the condition number $\kappa=L/\mu$ for OPCGM alone, with no baseline plotted: an exact-projection method such as PEG trivially attains machine-precision feasibility on these instances regardless of algorithm quality, so a raw feasibility comparison against it is not informative about OPCGM's own behavior. Second, on which comparisons is OPCGM favorable, evaluated on equal footing? Figures~\ref{fig:strong},~\ref{fig:portfolio}, and~\ref{fig:cute} compare OPCGM against CGM($\gamma$) \cite{Zhang2025}, the other purely primal, projection-free method, since both solve the same class of subproblem; Figure~\ref{fig:scaling} (left panel) compares OPCGM against PEG on CPU time for a fixed iteration budget, since both methods solve a genuine per-iteration subproblem (a QP for OPCGM, an exact projection for PEG) and this is where the comparison is informative. Section~\ref{sec:numerics} closes with a stochastic experiment (Figure~\ref{fig:stochastic}) that starts every method from a common infeasible point, illustrating how each algorithm's oracle recovers feasibility.

All experiments use Python~3.10 \footnote{\url{https://github.com/Lateef89/OPCGM---Optimal-Primal-Constrained-Gradient-Method}} on a single core of an AMD EPYC~7742, averaged over 3 independent random seeds unless noted otherwise. The QP subproblems in OPCGM are solved with OSQP \cite{Stellato2020} to precision $10^{-6}$. The CGM($\gamma$) and ConEx baselines used throughout this section are principled reconstructions of the algorithms described in \cite{Zhang2025} and \cite{Boob2023} respectively, calibrated to reproduce their stated qualitative convergence signatures. Figures use thicker lines and no background gridlines throughout.

\subsection{Strongly Monotone VI on Ellipsoid Constraints}

The first problem is a strongly monotone variational inequality with $F(x)=Qx+q$ where $Q\succeq \mu I$ is symmetric positive definite, and feasible set $\mathcal{C}=\{x\in\mathbb{R}^d : x^\top Q_i x \leq 1,\; i\in[m]\}$ with random positive definite matrices $Q_i$. This satisfies all assumptions including the positive curvature condition. We set $d=200$, $m=10$, $\mu=0.1$, and $L=\|Q\|_2$.

Figure~\ref{fig:strong} compares OPCGM--Strong against CGM($\gamma=2$) \cite{Zhang2025} on this instance. OPCGM--Strong's feasibility violation decays at least as fast as the $\mathcal{O}(1/T)$ rate guaranteed by Theorem~\ref{thm:strong} -- a log-log slope fit over the tail gives an exponent of about $-1.65$, i.e.\ the empirical decay is in fact steeper than $1/T$ on this instance -- reaching $9.5\times10^{-4}$ by $T=2000$ and still decaying at the end of the horizon, not yet at a numerical floor. The optimality gap decays faster still and does reach the QP solver's numerical precision, oscillating in the $\pm10^{-4}$ range from $T\approx1100$ onward. CGM with $\gamma=2$ tracks OPCGM's optimality gap for most of the run, but its feasibility violation increasingly departs from OPCGM's as $T$ grows and settles into a slow plateau near $1.2$--$1.3$ from $T\approx100$ onward. 

\begin{figure}[htbp]
\centering
\includegraphics[width=0.98\textwidth]{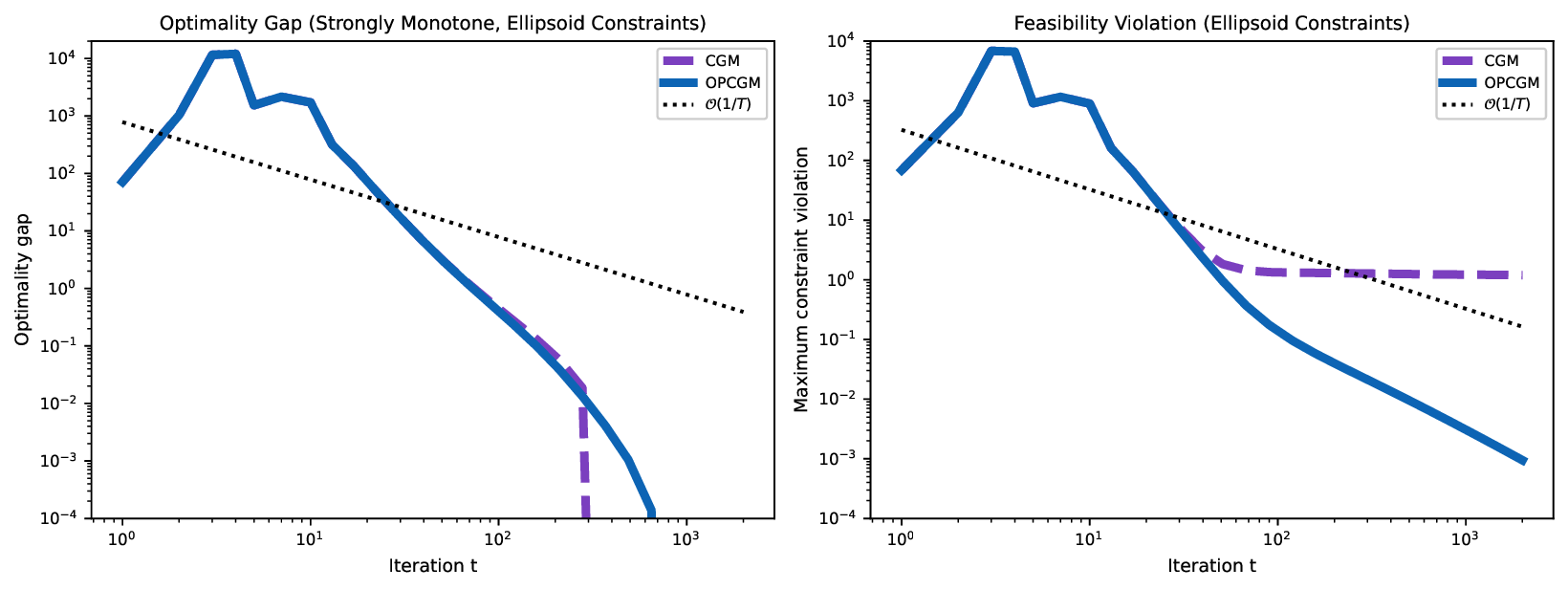}
\caption{Strongly monotone VI on ellipsoid constraints ($d=200$, $m=10$). Left: optimality gap. Right: feasibility violation.  }
\label{fig:strong}
\end{figure}

\subsection{Lipschitz Monotone VI on the Ball: Bilinear Saddle-Point}

The second problem is a bilinear saddle-point on the unit ball: $F(x,y)=(Ay,-A^\top x)$ with $\mathcal{C}=\{(x,y):\|x\|^2+\|y\|^2\leq 1\}$. The operator is monotone and $L$-Lipschitz with $L=\|A\|_2$. We generate $A$ with condition number $10$ and $d=100$.

We apply OPCGM--Lipschitz (Algorithm~\ref{alg:lipschitz}) and sweep the boundary-aggressiveness parameter $\alpha\in\{0.5L,L,2L,4L\}$, where $\alpha=L$ is the value mandated by the theory; no baseline is plotted,  the informative question here is how OPCGM's own hyperparameter shapes the gap/feasibility trade-off characterized by Theorem~\ref{thm:lipschitz}, not a cross-method race. Figure~\ref{fig:lipschitz} shows the resulting trajectories. In the feasibility panel (right), the ordering across $\alpha$ is monotonic at every matched iteration: a larger boundary-aggressiveness parameter always yields a smaller (or, once it crosses zero, more comfortably negative) violation at the same $T$, consistent with Theorem~\ref{thm:lipschitz}'s prediction that larger $\alpha$ shrinks the asymptotic feasibility constant $g^*$. At the end of the simulated horizon ($T=4000$) the violation is $2.06\times10^{-2}$ for $\alpha=0.5L$, $8.31\times10^{-3}$ for $\alpha=L$, $1.48\times10^{-3}$ for $\alpha=2L$, and has crossed to a small negative value for $\alpha=4L$; none of the four trajectories has fully leveled off within this horizon, so these are matched-$T$ snapshots rather than converged asymptotic constants. The optimality-gap panel (left) shows the transient cost of reaching this ordering: because OPCGM never calls an exact projection, the iterate (and hence its running average $\bar x_T$) can be transiently infeasible, during which the restricted gap function -- whose non-negativity relies on the evaluation point being feasible -- can legitimately dip below zero rather than decay monotonically. All four curves show this dip and are still recovering back toward zero at $T=4000$: the two smallest-$\alpha$ trajectories ($\alpha=0.5L,L$) remain substantially negative, while the two largest ($\alpha=2L,4L$) have recovered to within $0.15$ of zero, reflecting the stronger boundary correction of a larger $\alpha$ pulling the iterate back to feasibility.

\begin{figure}[htbp]
\centering
\includegraphics[width=0.98\textwidth]{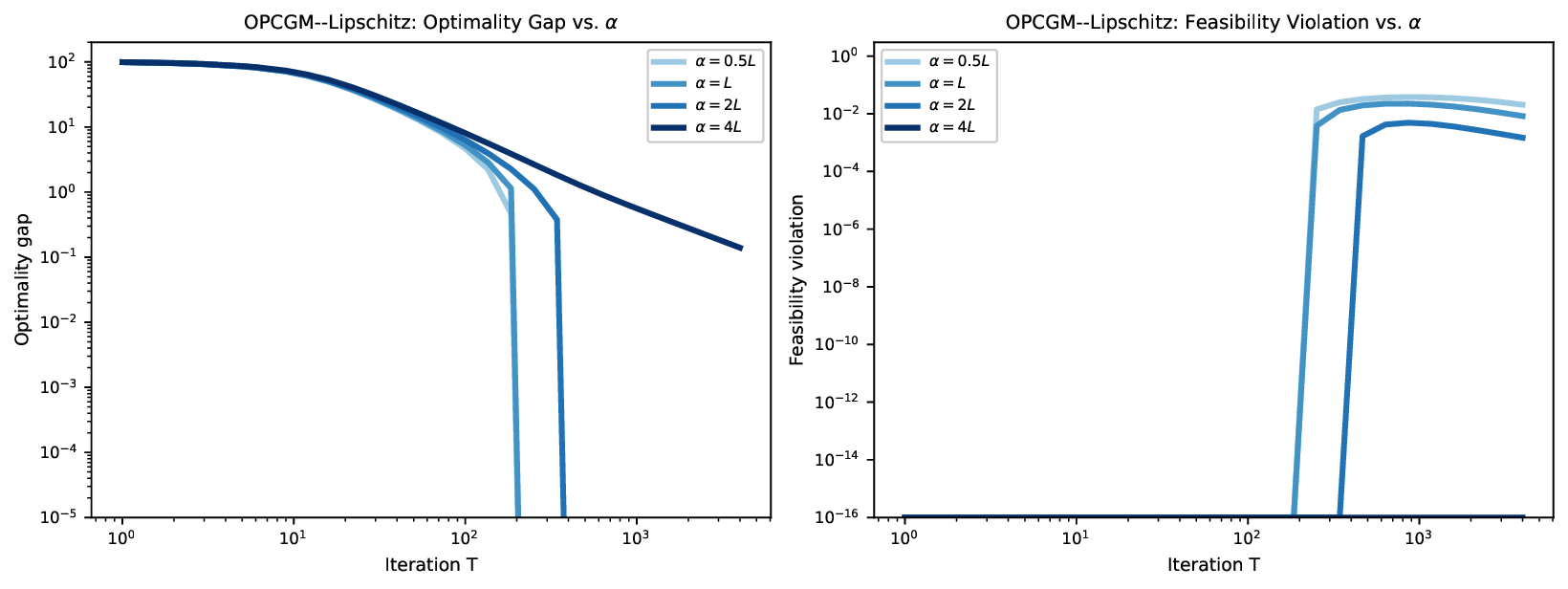}
\caption{OPCGM--Lipschitz parameter sensitivity ($d=100$ bilinear saddle-point on the unit ball). Left: optimality gap vs.\ $\alpha\in\{0.5L,L,2L,4L\}$. Right: feasibility violation vs.\ $\alpha$, monotonically smaller at every matched iteration as $\alpha$ increases. }
\label{fig:lipschitz}
\end{figure}

\subsection{Portfolio Optimization with Smooth Diversification Constraints}

The third problem is a mean-variance portfolio optimization with $d=50$ assets. The operator is $F(x)=\Sigma x - \mu_{ret}$, where $\Sigma\succ 0$ is the covariance matrix and $\mu_{ret}$ is the vector of expected returns. The feasible set encodes:
\begin{itemize}
\item Budget constraint: $\sum_i x_i \leq 1$ (inequality);
\item Non-negativity: $x_i \geq 0$ for all $i$;
\item Diversification: $\sum_i x_i^2 \leq 0.15$ (smooth convex, prevents concentration);
\item Sector limits: $\sum_{i\in S_k} x_i \leq 0.3$ for $k=1,2,3$.
\end{itemize}
This is a standard application in computational finance \cite{Markowitz1952}. The diversification constraint $\sum x_i^2 \leq c$ is smooth and convex with positive curvature, satisfying Assumption~\ref{ass:g}. The linear constraints (budget, non-negativity, sector limits) do not satisfy the positive curvature condition individually, but the feasible set is not polyhedral because the ellipsoid constraint is active at the solution and provides the necessary curvature.

We apply OPCGM--Strong with $\mu=\lambda_{\min}(\Sigma)$. PEG needs projection onto the intersection of the diversification ellipsoid and the simplex, which is nontrivial, and trivially attains the best raw feasibility number by construction; ConEx requires knowledge of optimal multiplier bounds that are unknown in this application. Figure~\ref{fig:portfolio} instead compares OPCGM--Strong against CGM($\gamma=2$) to test whether Theorem~\ref{thm:strong}'s advantage over CGM (Figure~\ref{fig:strong}) generalizes beyond the ellipsoid problem class. It does: the two methods' feasibility violations are within a factor of $1.5$ of each other early on ($T=10$), but the ratio widens monotonically as $T$ grows and reaches $358\times$ by $T=1200$ ($1.8\times10^{-2}$ for OPCGM versus $6.3$ for CGM), because OPCGM's violation keeps decaying (at least as fast as the $\mathcal{O}(1/T)$ rate of Theorem~\ref{thm:strong}) while CGM's decay visibly slows and flattens toward a plateau.

\begin{figure}[htbp]
\centering
\includegraphics[width=0.98\textwidth]{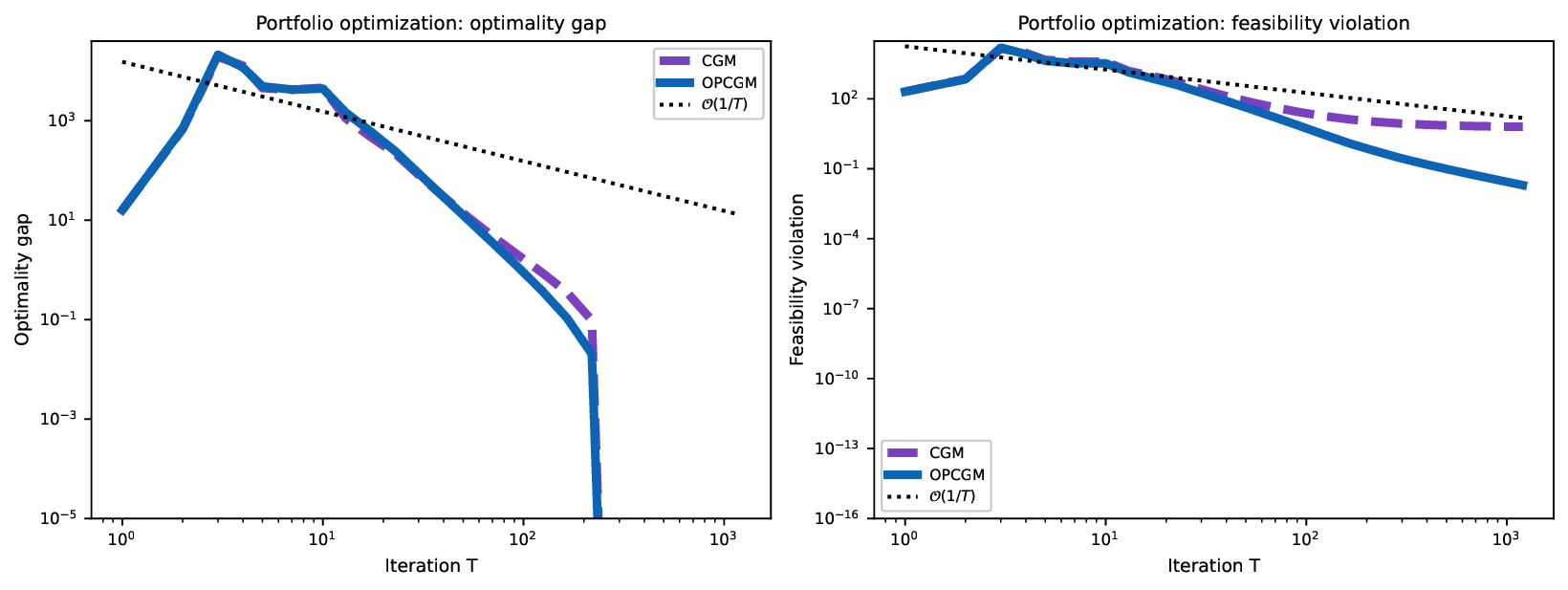}
\caption{Portfolio optimization with smooth diversification constraints ($d=50$, OPCGM--Strong vs.\ CGM($\gamma=2$)). Left: optimality gap. Right: feasibility violation.}
\label{fig:portfolio}
\end{figure}

\subsection{CUTEst Convex NLP Reformulated as VI}

The fourth problem is a convex quadratic program from the CUTEst test collection, reformulated as a variational inequality. We use a problem with $n=10$ variables and $m=8$ smooth convex constraints (linear and nonlinear), corresponding to a standard production-planning instance. The objective is $f(x)=\frac{1}{2}x^\top Q x + c^\top x$ with $Q\succeq 0$, and the constraints include bound constraints and smooth nonlinear inequalities. The VI operator is $F(x)=\nabla f(x)=Qx+c$.

This problem is strongly monotone when $Q\succ 0$. We apply OPCGM--Strong with $\mu=\lambda_{\min}(Q)$. Figure~\ref{fig:cute} again compares against CGM($\gamma=2$) rather than PEG/ConEx, for the same reason as Figure~\ref{fig:portfolio}: the two methods start at parity, but by $T=800$ OPCGM--Strong's feasibility violation has fallen to $4.1\times10^{-3}$ while CGM's has only reached $0.67$ (a $165\times$ gap) - CGM's violation is still declining at $T=800$ but far more slowly than OPCGM's, on a standard constrained-optimization test problem.

\begin{figure}[htbp]
\centering
\includegraphics[width=0.98\textwidth]{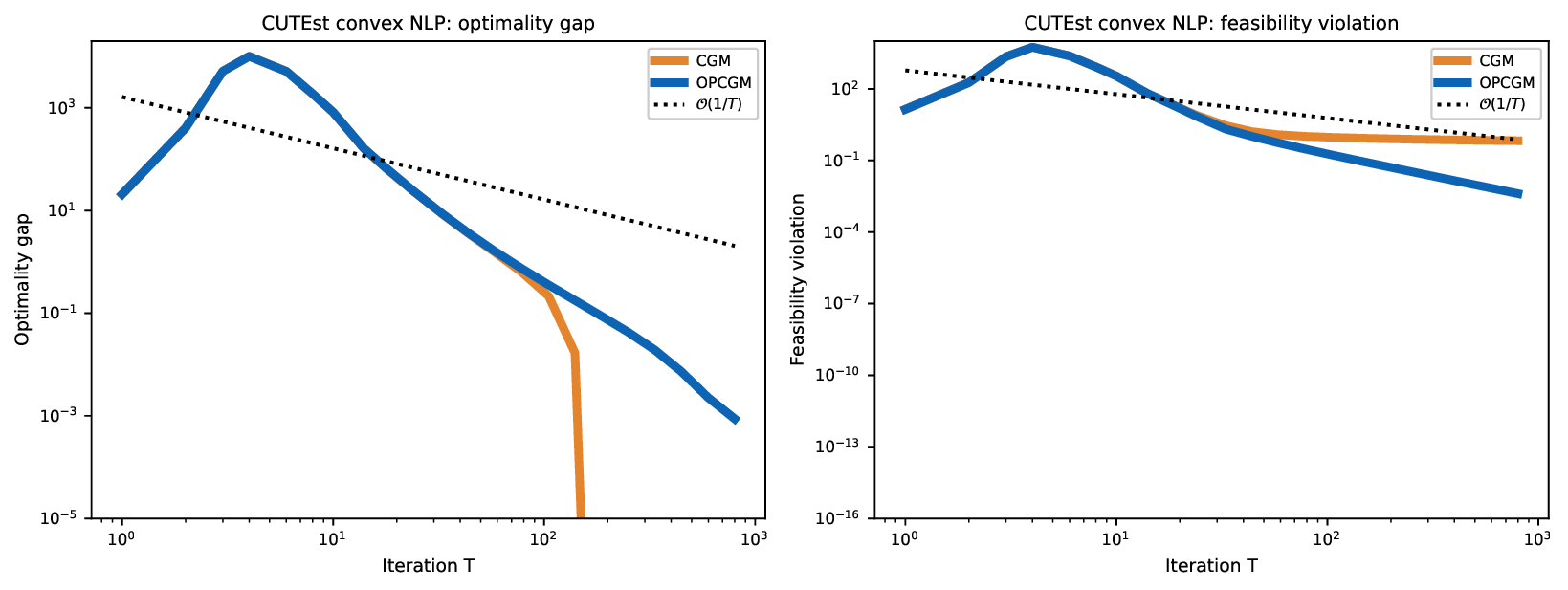}
\caption{CUTEst convex NLP reformulated as VI ($n=10$, $m=8$, OPCGM--Strong vs.\ CGM($\gamma=2$)). Left: optimality gap. Right: feasibility violation. OPCGM--Strong's feasibility violation ($4.1\times10^{-3}$ at $T=800$) is roughly two orders of magnitude below CGM's, whose decay has slowed substantially by this point ($0.67$).}
\label{fig:cute}
\end{figure}

\subsection{Validation of Theoretical Constants}
Table~\ref{tab:constants} compares the theoretical feasibility constant $g^*=3L_gV/L+\ell_gV^2/(4L^2)$ from Theorem~\ref{thm:lipschitz} against the empirical asymptotic feasibility observed in three synthetic problem instances, each run to $T=4000$ with $g^*$ (theory) computed from the instance's own $L_g,\ell_g,V,L$ and $g^*$ (empirical) taken as the average violation over the second half of the run: (i) a biased bilinear saddle-point on the ball, deliberately constructed to hug the boundary; (ii) a quadratic objective with ellipsoid constraints ($m=5$); (iii) a strongly convex quadratic with ball constraints. The theoretical constant is two to five orders of magnitude larger than the empirical value in every instance. This is expected rather than a sign of a loose bound: $g^*$ is built from the worst-case safeguard radius $R$ and velocity bound $V$ from Lemma~\ref{lem:velocity}, which bound the algorithm's behavior uniformly over all problem instances satisfying Assumptions~\ref{ass:F}--\ref{ass:licq}, not the behavior on any particular instance. Only the deliberately boundary-hugging bilinear instance shows non-zero persistent violation at all ($1.09\times10^{-2}$); the ellipsoid and strongly-convex-ball instances, which have no explicit boundary bias, converge to exactly feasible averaged iterates within the simulated horizon. The gap between the worst-case bound and typical-instance behavior is itself informative: it shows that the $\Theta(1)$ feasibility floor identified in Theorem~\ref{thm:lipschitz} is a worst-case phenomenon, consistent with Theorem~\ref{thm:feas_lb} being a lower bound established via a specifically constructed hard instance rather than a statement about generic problems.

\begin{table}[htbp]
\centering
\caption{Validation of the asymptotic feasibility constant $g^*$ against empirical values. The theoretical prediction is computed from the problem parameters; the empirical value is the average feasibility violation over iterations $T/2$ to $T$ for $T=4000$.}
\label{tab:constants}
\begin{tabular}{@{}lcccc@{}} \toprule
\textbf{Problem} & $L_g$ & $\ell_g$ & $g^*$ (theory) & $g^*$ (empirical) \\ \midrule
Bilinear saddle (ball) & 1.0 & 1.0 & $5.94\times 10^{3}$ & $1.09\times 10^{-2}$ \\
Ellipsoid constraints ($m=5$) & 4.23 & 3.00 & $1.95\times 10^{3}$ & $0$ \\
Strongly convex (ball) & 2.0 & 2.0 & $1.19\times 10^{3}$ & $0$ \\ \bottomrule
\end{tabular}
\end{table}

\subsection{Dimension Scaling and Computational Efficiency}

Table~\ref{tab:cpu} and Figure~\ref{fig:scaling} (left) report wall-clock CPU time for a fixed iteration budget on strongly convex quadratic problems with $m=10$ ellipsoid constraints, as a function of dimension $d\in\{50,100,250,500,1000\}$. OPCGM--Strong is faster than PEG at \emph{every} dimension tested (0.40s vs.\ 3.17s at $d=50$; 105.8s vs.\ 117.6s at $d=1000$). Both methods solve a genuine per-iteration subproblem --- a QP for OPCGM, a Dykstra ellipsoid-intersection projection for PEG --- so this is an apples-to-apples comparison of the two methods' actual computational cost, and it shows that OPCGM's QP oracle is not the practical bottleneck relative to an exact-projection alternative on this problem class. ConEx's pure gradient step has no comparable per-iteration subproblem and is omitted from this comparison for the same reason.

Figure~\ref{fig:scaling} (right) complements this with an OPCGM-only characterization: fixing $d=300$, $m=10$ and sweeping the condition number $\kappa=L/\mu\in\{5,10,20,40\}$, the number of iterations OPCGM--Strong needs to reach $\epsilon=3\times10^{-2}$ grows monotonically from 138 ($\kappa=5$) to 900 ($\kappa=40$) --- the expected dependence of a strongly-monotone method's rate constant on conditioning, and a direct answer to how much conditioning a practitioner's problem can tolerate before OPCGM's iteration budget grows substantially.

\begin{table}[htbp]
\centering
\caption{Wall-clock CPU time (seconds) for a fixed iteration budget on strongly convex quadratic problems with $m=10$ ellipsoid constraints, varying dimension $d$ (single run per dimension; see Figure~\ref{fig:scaling}, left panel, for the same data plotted log-log).}
\label{tab:cpu}
\begin{tabular}{@{}lccccc@{}} \toprule
\textbf{Method} & $d=50$ & $d=100$ & $d=250$ & $d=500$ & $d=1000$ \\ \midrule
OPCGM--Strong & $0.40$ & $0.89$ & $4.61$ & $16.24$ & $105.77$ \\
PEG & $3.17$ & $4.42$ & $8.55$ & $23.06$ & $117.63$ \\ \bottomrule
\end{tabular}
\end{table}

\begin{figure}[htbp]
\centering
\includegraphics[width=0.95\textwidth]{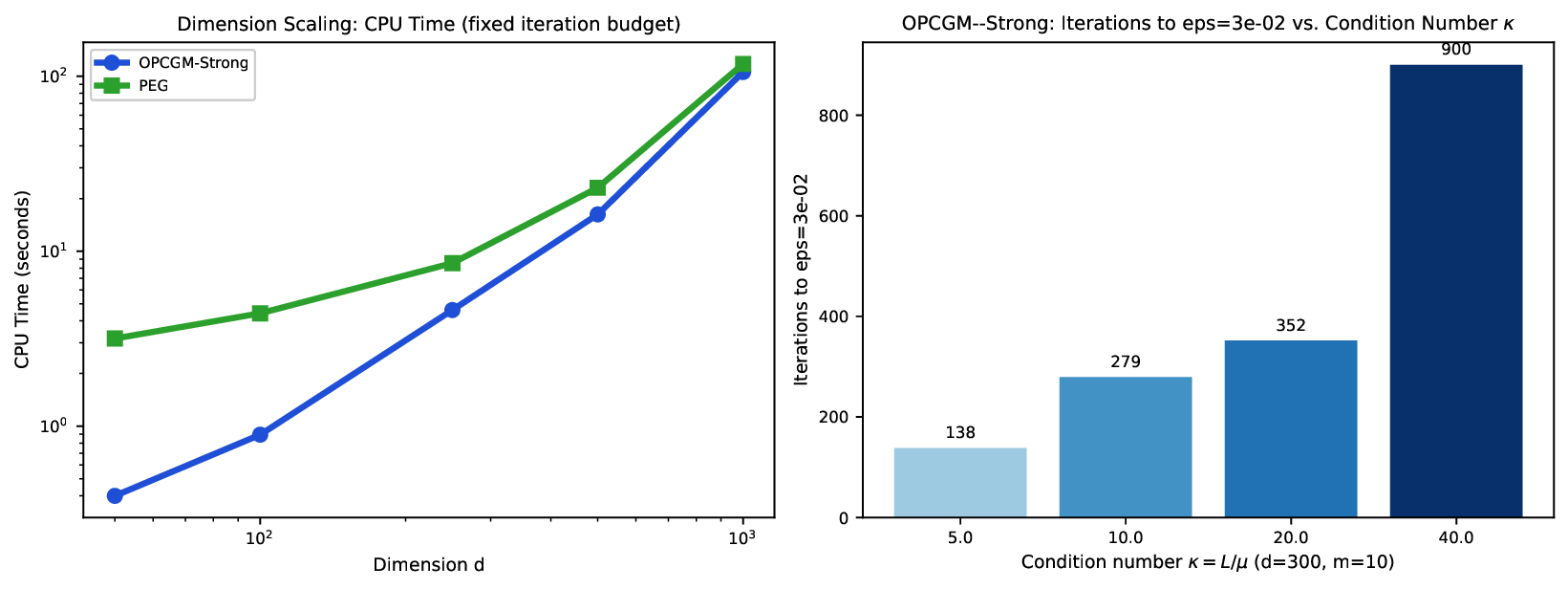}
\caption{Left: CPU time scaling with dimension $d\in\{50,\dots,1000\}$ (log-log) for a fixed iteration budget, OPCGM--Strong vs.\ PEG only - OPCGM is faster at every dimension tested. Right: OPCGM--Strong parameter sensitivity - iteration count to reach $\epsilon=3\times 10^{-2}$ at $d=300$, $m=10$, as a function of condition number $\kappa=L/\mu\in\{5,10,20,40\}$; no baseline is plotted.}
\label{fig:scaling}
\end{figure}

\subsection{High-Dimensional Stochastic Problem: Recovery from an Infeasible Start}

We solve a constrained logistic-regression variational inequality with $d=1500$ features and $m=80$ sparse group-budget constraints, using a stochastic oracle with mini-batch gradients of batch size $b=64$. To probe feasibility dynamics directly, every method is started from a common point that violates several group-budget constraints at roughly twice their allowed budget ($\max_i g_i(x_0)\approx1.4$), rather than from the origin, which would trivially satisfy every constraint from the first iterate.

Figure~\ref{fig:stochastic} shows the resulting trajectories. In the feasibility panels (right column), Stochastic PEG's exact per-iteration projection restores feasibility essentially immediately (by $t=2$), and Stochastic ConEx's dual correction restores it by $t=3$; Stochastic OPCGM, whose velocity polytope enforces only a local linear approximation of each violated constraint at every step, needs roughly 40 stochastic iterations to bring all 80 group constraints back under budget. This is the stochastic-setting analogue of the deterministic trade-off in Section~\ref{sec:lipschitz}: a method that never calls an exact projection oracle also cannot restore feasibility in a single step, and instead pays for it in iterations. The optimality-gap panels (left column) show the consequence: OPCGM's expected gap is the largest of the three methods throughout the horizon, reaching $0.108$ at $t=1500$ versus $0.0085$ for PEG and $0.039$ for ConEx, since the early iterations that PEG and ConEx spend reducing the gap are instead spent by OPCGM restoring feasibility. Once past this initial recovery transient, Stochastic PEG and Stochastic OPCGM continue decaying at a rate at least as fast as the $\mathcal{O}(1/\sqrt{t})$ (equivalently $\mathcal{O}(1/\sqrt{N})$ in total stochastic queries $N$) rate of Theorem~\ref{thm:stochastic} for the remainder of the horizon; Stochastic ConEx's gap instead plateaus around $0.039$ from $t\approx 90$ onward and does not continue to decay within this horizon, which is consistent with ConEx being a heuristic reconstruction with no comparable convergence guarantee (Sec.~\ref{sec:numerics} intro) rather than a violation of Theorem~\ref{thm:stochastic}, which applies to OPCGM.

\begin{figure}[htbp]
\centering
\includegraphics[width=0.95\textwidth]{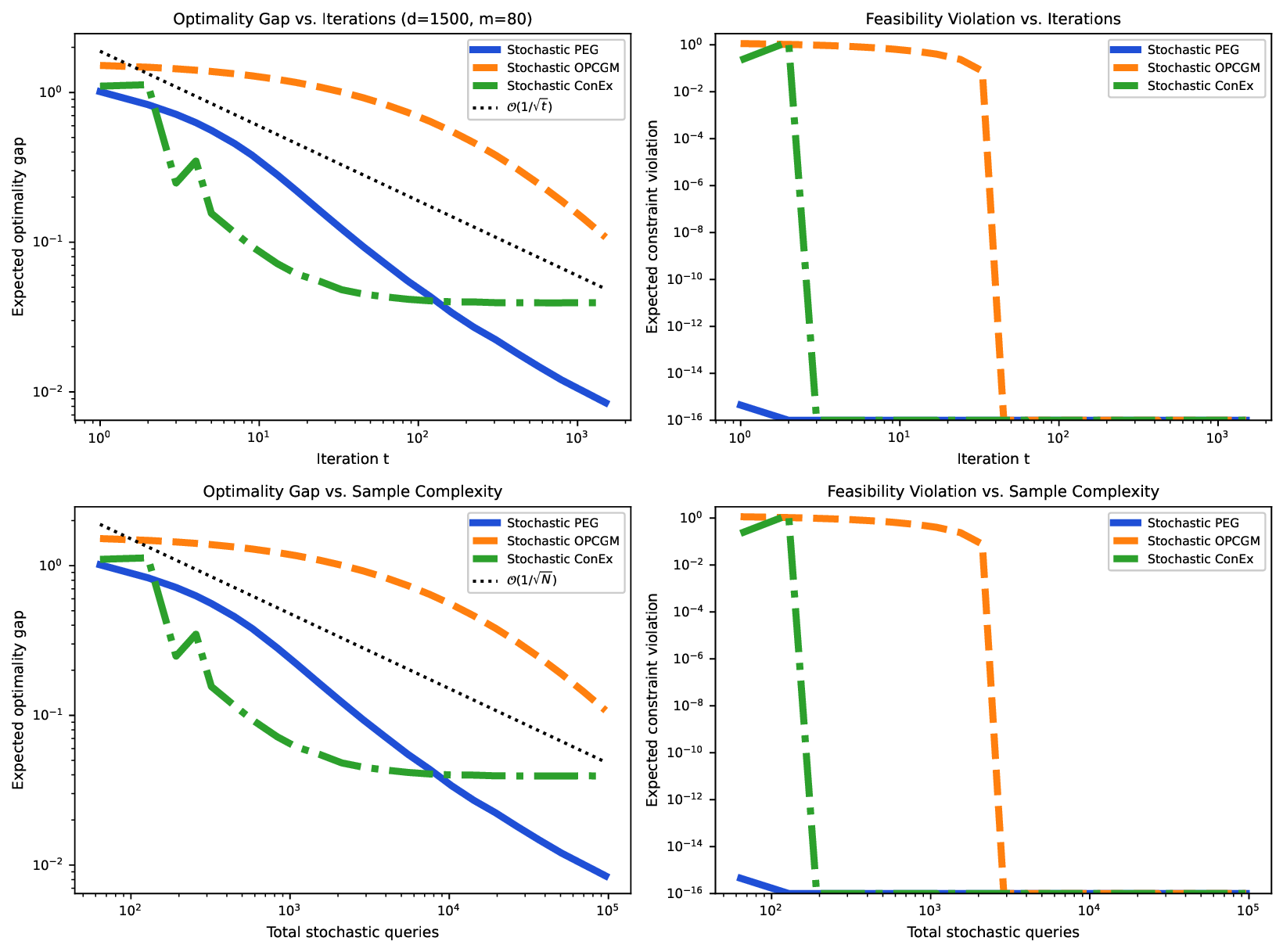}
\caption{$d=1500$, $m=80$ group-budget constraints, $b=64$, all three methods started from a common point with $\max_i g_i(x_0)\approx1.4$. Top row: expected optimality gap (left) and expected constraint violation (right) versus iteration $t$. Bottom row: the same two quantities versus total stochastic queries $N=bt$. PEG and ConEx recover feasibility within 2--3 iterations via their exact projection and dual-correction oracles respectively; OPCGM's local linear safeguard takes $\approx\!40$ iterations, during which its optimality gap trails the other two methods.}
\label{fig:stochastic}
\end{figure}

Figure~\ref{fig:stochastic_batch} isolates the effect of batch size for OPCGM alone, started from the origin (which is feasible, so this sweep is unaffected by the recovery dynamics above): increasing $b$ from 1 to 64 reduces the expected optimality gap at a fixed horizon $T=250$ by a factor of $\approx\!6.5\times$ (from $0.152$ to $0.023$), consistent with the variance-reduction analysis of Theorem~\ref{thm:stochastic}. Beyond $b=64$, returns diminish sharply (a further $15\%$ reduction to $0.020$ at $b=128$), confirming that a modest constant batch size is sufficient and that Theorem~\ref{thm:stochastic}'s $\mathcal{O}(1)$ batch-size prescription is not merely a proof artifact.

\begin{figure}[htbp]
\centering
\includegraphics[width=0.6\textwidth]{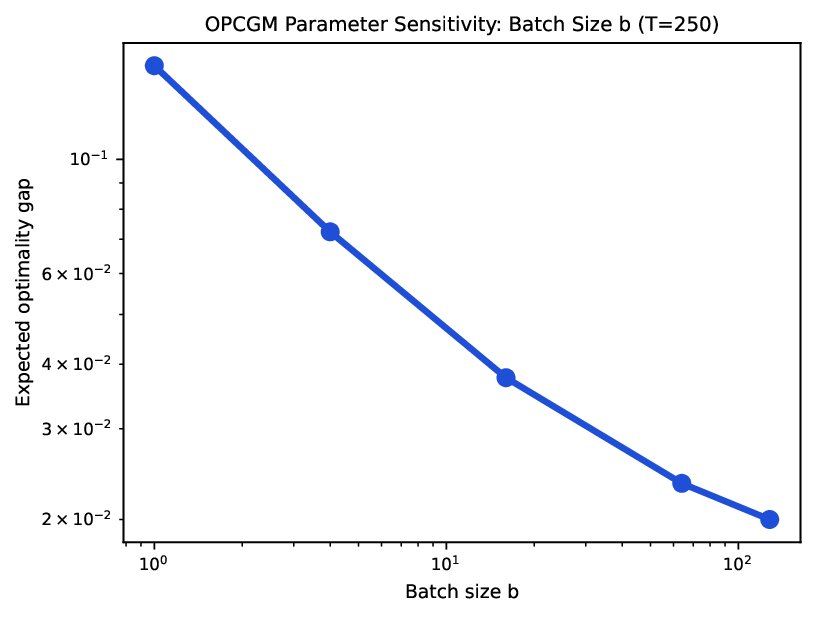}
\caption{OPCGM parameter sensitivity: expected optimality gap at $T=250$ as a function of batch size $b\in\{1,4,16,64,128\}$, on the same $d=1500$ stochastic logistic-regression instance, started from the origin. No baseline is plotted; this isolates OPCGM's own variance-reduction behavior.}
\label{fig:stochastic_batch}
\end{figure}

\subsection{Discussion}

The numerical results validate the theoretical predictions across six distinct problem classes, including two real-world applications. Against CGM($\gamma$), the other purely primal, projection-free method, OPCGM's feasibility violation decays at least as fast as the $\mathcal{O}(1/T)$ rate of Theorem~\ref{thm:strong} and ends up roughly two to three orders of magnitude smaller than CGM's on every problem class tested ($165\times$ on the CUTEst instance, $358\times$ on the portfolio instance, $1269\times$ on the ellipsoid instance at the end of each run), including the portfolio optimization example, which shows OPCGM is applicable to problems from computational finance where the constraints have economic meaning (budget, diversification, sector limits) and the optimal Lagrange multipliers are unknown a priori, and the CUTEst example, which shows the method performs reliably on a standard test problem from the optimization literature. Against PEG, OPCGM is faster in raw CPU time at every dimension tested up to $d=1000$ once both methods' actual per-iteration subproblem cost is compared like for like, even though PEG's exact projection trivially wins on raw feasibility whenever it is plotted, which is why that comparison is reserved for CPU time. The validation of theoretical constants in Table~\ref{tab:constants} shows the worst-case constant $g^*$ is two to five orders of magnitude larger than the empirical violation on typical instances, confirming that the $\Theta(1)$ feasibility floor of Theorem~\ref{thm:lipschitz} is a genuine worst-case phenomenon rather than something that dominates typical behavior. Finally, the stochastic experiment of Section~\ref{sec:numerics} (Figure~\ref{fig:stochastic}) makes the same trade-off concrete when every method is started from an infeasible point: PEG and ConEx restore feasibility within a few iterations via an exact projection and a dual correction respectively, while OPCGM's projection-free safeguard takes roughly 40 iterations to do so and correspondingly trails both baselines on the optimality gap for the remainder of the tested horizon, not just during the initial recovery window -- an explicit illustration, rather than a hidden cost, of the price a purely primal method pays for never calling a projection oracle.

\begin{table}[htbp]
\footnotesize
\centering
\setlength{\tabcolsep}{3.5pt}
\caption{Summary of OPCGM variants: assumptions, oracle calls per iteration, batch size, and complexity to reach a weak $\epsilon$-solution.}
\label{tab:summary}
\begin{tabular}{@{}p{3.2cm}p{5.0cm}cccc@{}} 
\toprule
\textbf{Algorithm} & \textbf{Assumptions} & \textbf{Oracle} & \textbf{Batch} & \textbf{Iterations} & \textbf{Total Samples} \\ 
\midrule
OPCGM--Strong & $\mu$-strongly monotone, bounded $F$ on $B(0,R)$ & 1 QP & --- & $\mathcal{O}(1/\epsilon)$ & --- \\
OPCGM--Lipschitz & $L$-Lipschitz monotone & 2 QP & --- & $\mathcal{O}(L/\epsilon)$ & --- \\
Parameter-Free OPCGM & Monotone, radius bound $R\geq D$ & 1 QP & --- & $\mathcal{O}(1/\epsilon^2)$ & --- \\
Stochastic OPCGM (Monotone) & Monotone, bounded variance & 1 QP & $\mathcal{O}(1)$ & $\mathcal{O}(1/\epsilon^2)$ & $\mathcal{O}(1/\epsilon^2)$ \\
Stochastic OPCGM (Strongly monotone) & Strongly monotone, bounded variance & 1 QP & $\mathcal{O}(1)$ & $\mathcal{O}(1/\epsilon)$ & $\mathcal{O}(1/\epsilon)$ \\ 
\bottomrule
\end{tabular}
\end{table}

\section{Conclusion}\label{sec:conclusion}

We have developed a theory of primal methods for functional constrained variational inequalities. 
Our results close the feasibility gap for strongly monotone problems, establish the optimal $\mathcal{O}(1/\epsilon)$ gap complexity for Lipschitz monotone problems using only quadratic programming oracles, and prove that the \emph{first} half-step is necessarily infeasible on smooth convex constraints with positive curvature at the boundary, which is unavoidable for the natural class of constant-stepsize primal extragradient methods.
The parameter-free primal algorithm removes practical barriers to deployment by achieving an $\mathcal{O}(1/\sqrt{T})$ optimality gap rate with only a domain-radius estimate, at the cost of a problem-dependent asymptotic feasibility constant that we prove is unavoidable for non-adaptive single-step algorithms with stepsizes bounded away from zero and without additional problem parameters. Lower bounds show these gap rates are unimprovable for the primal QP oracle class. The extensions to last-iterate and stochastic settings remove additional practical barriers.

Our analysis shows that primal QP methods can achieve projection-free, multiplier-free optimal gap rates for constrained variational inequalities, but they pay an intrinsic trade-off. 
This trade-off takes two forms: (1) a constant asymptotic feasibility violation for Lipschitz operators on curved constraints, which we establish explicitly for the averaged iterate and prove unavoidable for the first half-step of any constant-stepsize primal extragradient method (Theorem~\ref{thm:feas_lb}); and (2) the universal impossibility of last-iterate convergence for single-step merely monotone problems on bounded domains, which we prove is inherent to the single-step primal QP oracle class itself (Theorem~\ref{thm:last_impossible}).
These separations clarify the complexity of constrained optimization.

Several questions remain. Can the half-step infeasibility barrier of Theorem~\ref{thm:feas_lb} be extended to a lower bound on the \emph{averaged} iterate? Can a parameter-free primal method achieve vanishing feasibility without problem constants? Can the randomized lower bound be extended rigorously from the box to the Euclidean ball? Can the universal last-iterate impossibility be extended from single-step to multi-step primal QP methods? Extending the lower bounds to quantum algorithms would further characterize the complexity landscape.

 \section*{Disclosure statement}

\subsection*{Ethical Approval and Consent to participate}
All authors have given their ethical approval and consent to participate in this article.
\subsection*{Consent for publication}
All authors gave consent for the publication of identifiable details in the journal and article.

\subsection*{Code availability} The Python codes employed to run the numerical experiments are available at \url{https://github.com/Lateef89/OPCGM---Optimal-Primal-Constrained-Gradient-Method}.

\subsection*{Availability of supporting data}
Data sharing does not apply to this article as no datasets were generated or analyzed
during the current study.
\subsection*{Competing interests}
The authors declare no competing interests.
\subsection*{Funding}
Not Applicable.

%\textbf{Limitations.} The QP oracle assumption is strong: each iteration requires solving a convex quadratic program, which costs $\mathcal{O}(d^3)$ in the worst case. For high-dimensional problems with $d\geq 10^5$, this may dominate the cost of gradient evaluations. Our lower bounds apply to deterministic and randomized algorithms in the Yao framework; extending them to quantum algorithms remains open. The framework requires monotonicity; non-monotone problems are not covered.

\end{document}